\newcounter{condition}
\newtheorem{thm}{Theorem}[section]
\newtheorem{mainthm}{Theorem}
\newtheorem{claim}{Claim}[thm]
\newtheorem{prop}[thm]{Proposition}
\newtheorem{lemma}[thm]{Lemma}
\newtheorem{cor}[thm]{Corollary}
\newtheorem{fact}[thm]{Fact}
\newtheorem{subclaim}{Subclaim}[claim]
\theoremstyle{definition}
\newtheorem{defn}[thm]{Definition}
\newtheorem{notation}[thm]{Notation}
\newtheorem{indef}[claim]{Notation}
\theoremstyle{remark}
\newtheorem{remark}[thm]{Remark}
\newtheorem{conv}[thm]{Convention}
\renewcommand{\mid}{\mathrel{|}\allowbreak}
\renewcommand{\restriction}{\mathbin\upharpoonright}
\DeclareMathOperator{\acc}{acc}
\DeclareMathOperator{\reg}{Reg}
\DeclareMathOperator{\nacc}{nacc}
\DeclareMathOperator{\cf}{cf}
\DeclareMathOperator{\dom}{dom}
\DeclareMathOperator{\im}{Im}
\DeclareMathOperator{\otp}{otp}
\DeclareMathOperator{\h}{ht}
\DeclareMathOperator{\add}{Add}
\DeclareMathOperator{\chr}{Chr}
\DeclareMathOperator{\p}{P}
\DeclareMathOperator{\id}{id}
\DeclareMathOperator{\pr}{Pr}
\DeclareMathOperator{\cof}{cof}
\DeclareMathOperator\suc{succ}
\DeclareMathOperator\refl{Refl}
\newcommand\xbox\boxtimes
\newcommand\s{\subseteq}
\newcommand\fin{\textup{fin}}
\newcommand{\sqx}{\sqleft{\chi}}
\newcommand\ind{\textup{ind}}
\newcommand*\fork[1]{{\pitchfork}(#1)}
\newcommand*\cvec[1]{\vec{\mathcal#1}}
\newcommand\sq{\sqsubseteq}
\newcommand\sqleft[1]{\mathrel{_{#1}{\sqsubseteq}}}
\newcommand\sqleftup[1]{\mathrel{^{#1}{\sqsubseteq}}}
\newcommand*\sqleftboth[2]{\mathrel{^{#1}_{#2}{\sqsubseteq}}}
\newcommand*\sqstarleftboth[2]{\mathrel{^{#1}_{#2}{\sqsubseteq^*}}}
\newcommand\br{\blacktriangleright}
\newcommand\onehalf{1\frac{1}{2}}
\newcommand\defaultaction{\textsf{extend}}
\newcommand\pvec{\fbox{${\cdots}$}\hspace{1pt}}
\newcommand*\axiomfont[1]{\textsf{\textup{#1}}}
\newcommand\pid{\axiomfont{PID}}
\newcommand\gch{\axiomfont{GCH}}
\newcommand\REG{\axiomfont{REG}}
\newcommand\SING{\axiomfont{SING}}
\newcommand\ch{\axiomfont{CH}}
\newcommand\ns{\textup{NS}}
\newcommand\bd{\textup{bd}}
\renewcommand\ll{<_{\mathbb{Q}_\lambda}}
\newcommand\VisL{\ensuremath{\axiomfont{V}=\axiomfont{L}}}
\newcommand\stree{\subsetneq}
\newcommand\sect[1]{\hypersetup{linkcolor=blue}\textcolor{blue}{\hyperref[#1]{Section~\ref{#1}}}\hypersetup{linkcolor=black}}
\newcommand\sealantichain{\textsf{anti}}
\newcommand\sd{\framebox[3.0mm][l]{$\diamondsuit$}\hspace{0.5mm}{}}
\numberwithin{table}{section}
\title{Proxy principles in combinatorial set theory}
\author{Ari Meir Brodsky}
\address{Mathematics Department, Shamoon College of Engineering, 56 Bialik St., Be'er Sheva, Israel.}
\urladdr{\url{https://en.sce.ac.il/faculty/ari_brodsky}}
\email{arimebr@sce.ac.il}
\author{Assaf Rinot}
\address{Department of Mathematics, Bar-Ilan University, Ramat-Gan 52900, Israel.}
\urladdr{http://www.assafrinot.com}
\author{Shira Yadai}
\address{Department of Mathematics, Bar-Ilan University, Ramat-Gan 52900, Israel.}
\email{greenss@biu.ac.il}
\keywords{Square principles, xbox, Parameterized proxy principle, Ladder system, $C$-sequence, Souslin tree, Coherence relations}
\subjclass[2010]{Primary: 03-02; Secondary: 03E65, 03E05, 03E35.}
\begin{document}
\begin{abstract}
The parameterized proxy principles were introduced by Brodsky and Rinot in a 2017 paper,
as new foundations for the construction of $\kappa$\nobreakdash-Souslin trees
in a uniform way that does not depend on the nature of the (regular uncountable) cardinal $\kappa$.
Since their introduction, these principles have facilitated construction of Souslin trees with complex combinations of features,
and have enabled the discovery of completely new scenarios in which Souslin trees must exist.
Furthermore, the proxy principles have found new applications beyond the construction of trees.

This paper opens with a comprehensive exposition of the proxy principles.
We motivate their very definition, emphasizing the utility of each of the parameters and the consequent flexibility that they provide.
We then survey the findings surrounding them, presenting a rich spectrum of unrelated models and configurations in which the proxy principles are known to hold,
and showcasing a gallery of Souslin trees constructed from the principles.

The last two sections of the paper offer new results. In particular,
for every positive integer $n$, we give a construction of a $\lambda^+$-Souslin tree
all of whose $n$-derived trees are Souslin, but all of whose $(n+1)$-derived trees are special.
\end{abstract}
\date{Preprint as of September 8, 2025. For updates, visit \textsf{http://p.assafrinot.com/65}.}
\maketitle
\tableofcontents
\section{Introduction}
\label{section:introduction}

In his trailblazing paper analyzing the fine structure of the constructible hierarchy, appearing more than fifty years ago,
Jensen proved \cite[Theorem~6.2]{MR309729} that in G\"odel's constructible universe $\axiomfont{L}$,
there exists a $\kappa$-Souslin tree for every regular uncountable cardinal $\kappa$ that is not weakly compact.
Jensen's proof goes through two newly-minted combinatorial principles,
\emph{diamond} ($\diamondsuit$) and \emph{square} ($\square$),
introduced in \S5--6 of that paper.
The isolation and formulation of these new axioms have made the combinatorial properties of $\axiomfont{L}$
accessible to generations of set theorists,
enabling combinatorial constructions of complicated objects and leading to the settling of
open problems in fields including topology, measure theory, and group theory.

Let $\kappa$ denote a regular uncountable cardinal.
Recall that a \emph{coherent $C$-sequence over $\kappa$} is a sequence
$\langle C_\alpha \mid \alpha<\kappa \rangle$
such that, for every limit ordinal $\alpha<\kappa$:
\begin{itemize}
\item $C_\alpha$ is a club subset of $\alpha$; and
\item $C_\alpha\cap\bar\alpha = C_{\bar\alpha}$ for every $\bar\alpha\in\acc(C_\alpha)$.\footnote{For any set $C$ of ordinals, $\acc(C)$ stands for the set
$\{\beta\in C \mid \sup(C\cap\beta)=\beta>0\}$ of its accumulation points.
In particular, for an ordinal $\gamma$, $\acc(\gamma)$ stands for the set of nonzero limit ordinals below $\gamma$.}
\end{itemize}

An easy way to obtain such a sequence is to fix at the outset some club $D$ in $\kappa$, and then
let $C_\alpha:=D\cap\alpha$ for every $\alpha\in\acc(D)$ and $C_\alpha:=\alpha\setminus\sup(D\cap\alpha)$ for any other $\alpha$.
More interesting, however, are principles asserting the existence of coherent $C$-sequences
satisfying some \emph{non-triviality} condition.
For example:
\begin{itemize}
\item Jensen's square principle $\square_\lambda$ of \cite[\S5.1]{MR309729}
asserts the existence of a coherent $C$-sequence over $\lambda^+$,
$\langle C_\alpha \mid \alpha<\lambda^+ \rangle$,
such that $\otp(C_\alpha)\leq\lambda$ for every $\alpha<\lambda^+$.

\item For a stationary set $E\s\acc(\kappa)$,
the principle described in the conclusion of~\cite[Theorem~6.1]{MR309729},
which is commonly denoted $\square(E)$,
asserts the existence of a coherent $C$-sequence over $\kappa$,
$\langle C_\alpha \mid \alpha<\kappa \rangle$,
that \emph{avoids} $E$, meaning that $\acc(C_\alpha)\cap E =\emptyset$ for every $\alpha<\kappa$.

\item Todor\v{c}evi\'c's square principle $\square(\kappa)$ \cite[p.~267]{MR908147}
asserts the existence of a coherent $C$-sequence over $\kappa$,
$\langle C_\alpha \mid \alpha<\kappa \rangle$,
that is \emph{unthreadable} --- meaning
that there is no club $D\subseteq\kappa$ such that
$D\cap\alpha = C_\alpha$ for every $\alpha\in\acc(D)$.
\end{itemize}

In the decades ensuing since~\cite{MR309729},
many variants of both diamond and square have appeared ---
strengthening, weakening, or adapting each one as needed to solve various combinatorial problems.\footnote{Variants of square are surveyed in~\cite{magidor2012strengths}. Variants of diamond are surveyed in \cite{rinot_s01},
including the close connection between diamond principles and cardinal arithmetic.}
Strong combinations of square and diamond, such as Gray's principle $\sd_\lambda$ from \cite{MR2940957} and its further strengthening $\sd_\lambda^+$ from \cite{rinot21}, have appeared as well.
Square principles are primarily concerned with coherence,
whereas diamond principles are \emph{prediction} principles,
asserting that objects of size $\kappa$ can be predicted by means of their initial segments.

The construction of complicated combinatorial objects such as $\kappa$-Souslin trees
requires both prediction and coherence.
Classical constructions of Souslin trees have followed Jensen's lead in requiring the $\diamondsuit$-sequence's predictions
to occur in some nonreflecting stationary set $E$,
which must then be avoided by the square sequence in order not to interfere with building higher levels of the tree.
There are particular scenarios where the coherence requirements are transparent,
such as for $\kappa=\aleph_1$ where $\square_{\aleph_0}$ holds trivially,
and we thus find many classical constructions tailored to such cases alone.

Examining the classical literature,
one sees that construction of a $\kappa$\nobreakdash-Souslin tree with an additional property
(such as complete or regressive; rigid or homogeneous; specializable or non-specializable; admitting an ascent path or omitting an ascending path; free or uniformly coherent)
often depends on the nature of the cardinal $\kappa$ (be it a successor of a regular, a successor of a singular, or an inaccessible ---
in some cases even depending on whether $\kappa$ is the successor of a singular cardinal
of countable or of uncountable cofinality).
To obtain the additional features, constructions include extensive
bookkeeping, counters, timers, coding and decoding,
whose particular nature makes it difficult to transfer the process from one type of cardinal to another.

What happens if we want to replace an axiom known to imply the existence of a $\kappa_0$-Souslin tree with strong properties
by an axiom
from which a plain $\kappa_1$-Souslin tree can be constructed?
Do we have to revisit each scenario and tailor each of these particular constructions
in order to derive a tree with strong properties?

The parameterized proxy principles were introduced by Brodsky and Rinot in \cite{paper22} with the goal of overcoming this problem
by offering new foundations for constructing $\kappa$-Souslin trees for an arbitrary regular uncountable cardinal $\kappa$.
So far, they have been used to construct $\kappa$-Souslin trees in
\cite{paper22,rinot20,paper26,paper32,paper23,paper62,paper58,yadai}.
The core feature of the proxy principles
is that the non-triviality of a square-like sequence is ensured by a \emph{hitting} requirement ---
a weak form of prediction, to be explained in Section~\ref{section:what-are-proxy} ---
that is tailored for the desired construction,
rather than by the classical non-triviality conditions which were not flexible enough to obtain the optimal conclusions in many cases.
This tailoring enables \emph{uniform} construction of $\kappa$-Souslin trees and other combinatorial objects,
oblivious to the nature of~$\kappa$.

By incorporating such a
hitting feature into the square-like proxy principle,
one can reduce the requirements on the $\diamondsuit$-sequence:
In~\cite{paper22}, $\chi$\nobreakdash-complete $\kappa$-Souslin trees
were constructed using $\diamondsuit(\kappa)$ instead of $\diamondsuit(E)$ for some nonreflecting stationary
subset $E$ of $E^\kappa_{\ge\chi}$,\footnote{$E^\kappa_{\ge\chi}$ denotes the set $\{\alpha<\kappa\mid\cf(\alpha)\ge\chi\}$. The sets $E^\kappa_{>\chi}$, $E^\kappa_{\chi}$, $E^\kappa_{\neq\chi}$, $E^\kappa_{<\chi}$, and $E^\kappa_{\leq\chi}$ are defined analogously.}
and in~\cite{paper23}, $\diamondsuit(\kappa)$ was further relaxed
to the arithmetic hypothesis $\kappa^{<\kappa}=\kappa$.

\medskip

Since their introduction, the proxy principles have found new applications beyond the construction of Souslin trees.
In conjunction with $\diamondsuit$, we observe the following:
\begin{itemize}
\item In~\cite{paper29},
these principles were used to construct distributive Aronszajn trees, as well as special trees with a non-special projection.
\item In \cite{MR4530628}, these principles were used to a construct a large pairwise far family of Aronszajn trees.
\item In \cite{MR4833803}, these principles were used to construct minimal non-$\sigma$-scattered linear orders.
\end{itemize}

Furthermore, as a result of incorporating the hitting feature into the square-like sequence,
applications of the proxy principles in the absence of an arithmetic hypothesis, let alone a prediction principle,
have emerged, as follows:

\begin{itemize}\label{secondbatch}
\item In \cite{paper28}, these principles were used to construct a highly chromatic graph all of whose smaller subgraphs are
countably chromatic.
\item In \cite[Lemma~5.9]{paper47}, these principles were used to construct Ulam-type matrices.
\item In \cite[\S5]{paper48}, these principles were used to construct a Dowker space whose square is still Dowker.
\item In \cite[\S7]{paper45}, these principles were used to construct a $C$-sequence suitable for conducting walks on ordinals.\footnote{By walking along the outcome $C$-sequence,
the extreme instance $\pr_1(\kappa,\kappa,\kappa,\kappa)$ of Shelah's strong coloring principle was shown to be consistent.}
\end{itemize}

Alongside the wealth of applications of the proxy principles,
we turn our attention to the obvious question:
How do these new proxy principles compare
to the classical combinatorial axioms?
In~\cite{paper22,paper23},
a bridge to the classical foundations was built,
establishing that all previously known $\diamondsuit$-based constructions of $\kappa$-Souslin trees
may be redirected through the new foundations.
Concurrently, in \cite{paper22,MR3724382,paper24,rinot21,paper28,paper29,paper26,paper32,paper37,paper23,paper51},
instances of the proxy principles have been shown to hold in many unrelated configurations,
so that any conclusion derived from those instances is known to hold in completely new, unrelated scenarios.
Significantly, in addition to scenarios conforming to the spirit of \VisL,
it is shown that some instances of the proxy principles may consistently hold above large cardinals,
at a cardinal satisfying stationary reflection,
or in models of strong forcing axioms such as Martin's Maximum.
Furthermore, various notions of forcing inadvertently add instances of the proxy principles.
Thus, any application of a proxy principle
will automatically be known to hold in a rich spectrum of unrelated models.

Altogether, the proxy principles provide a successful disconnection between the combinatorial constructions and the study of the hypotheses themselves.
This project thus has two independent tasks:
Deriving rich applications of the proxy principles, and proving instances of the proxy principles in various scenarios.

\subsection{This paper}\label{sec11}
The main goal of this paper is to make the proxy principles accessible to anyone with experience in combinatorial set theory.
Until now, the various definitions and related results have been scattered throughout
multiple lengthy papers, making it difficult for the interested researcher
to adopt these principles as a starting point for deriving desired results.
We believe that the proxy principles have attained a significant level of maturity,
and we hope that by presenting this comprehensive exposition we can engage the reader and encourage them
to join us in our adventure of applying the proxy principles to obtain optimal results.

We now present the breakdown of the current paper.

In \sect{section:what-are-proxy}, we give a simple example from infinite graph theory that motivates the very need for a parameterized proxy principle,
and then patiently discuss each of the eight parameters of the proxy principle $\p(\ldots)$.
By the end of this section, the reader will hopefully agree that all of the parameters are quite natural indeed.

In \sect{section:proxyholds}, we gather configurations in which instances of the proxy principle hold, as established in the literature.

In \sect{section:gallery}, we list various types of Souslin trees that have been constructed using the proxy principles,
indicate where each of these results may be found in the literature and what vector of parameters is known to
be sufficient for the relevant construction.

The last two sections of this paper are dedicated to new proxy-based constructions of Souslin trees.
In \sect{section:Zakrzewski}, we present a proxy-based construction of a large family of pairwise-Souslin trees.
A sample corollary of the latter reads as follows.
\begin{mainthm}\label{thma} Assuming $\p(\kappa,2,{\sq},\kappa)$,
if there exists a $\kappa$-Kurepa tree,
then there exists a $\kappa$-Aronszajn tree $\mathbf T$ admitting $\kappa^+$-many $\kappa$-Souslin subtrees
such that the product of any finitely many of them is again Souslin.
\end{mainthm}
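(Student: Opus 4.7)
The plan is to build a single $\kappa$-Aronszajn tree $\mathbf T$ by the standard proxy recipe from a coherent $C$-sequence $\vec C=\langle C_\alpha\mid \alpha<\kappa\rangle$ witnessing $\p(\kappa,2,{\sq},\kappa)$, while using the $\kappa^+$ cofinal branches $\langle b_\xi\mid \xi<\kappa^+\rangle$ of a $\kappa$-Kurepa tree $K$ to index $\kappa^+$-many distinct subtrees $T_\xi\s\mathbf T$. I would arrange matters so that the underlying set of level $\alpha$ of $\mathbf T$ is (a copy of) the $\alpha$-th level of $K$, and define
\[
T_\xi:=\{t\in\mathbf T\mid t\text{ lies below }b_\xi\text{ under the identification}\}.
\]
Then each $T_\xi$ is automatically a downward-closed, normal, narrow $\kappa$-subtree of $\mathbf T$, and distinct branches of $K$ yield subtrees that eventually bifurcate.

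The construction of $\mathbf T$ proceeds by recursion on $\alpha<\kappa$. At successor stages each node is split in two. At a limit stage $\alpha$, for each pair $(t,\eta)$ with $t\in\mathbf T\upto\alpha$ and $\eta\in K_\alpha$ extending the $K$-label of $t$, I would construct a canonical cofinal branch through $\mathbf T\upto\alpha$ passing through $t$ by walking along the tail of $C_\alpha$ above $\h(t)$ and following $\eta$ as a guide inside $K$; the limit of this branch is declared the level-$\alpha$ node labeled $\eta$. The coherence of $\vec C$ makes this recipe well-defined across levels, and the correspondence with $K$ ensures that each $T_\xi$ receives exactly one new node above each of its previous nodes at every limit level.

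To verify the conclusion, the fact that each individual $T_\xi$ is $\kappa$-Souslin is the standard proxy-based antichain-sealing argument: a putative maximal $\kappa$-antichain $A\s T_\xi$ gets hit by $C_\alpha$ at some limit level $\alpha$ thanks to the single-set hitting feature of $\p(\kappa,2,{\sq},\kappa)$, after which the canonical branches constructed at stage $\alpha$ all extend elements of $A$ and bound it. For the product claim, given distinct $\xi_1,\dots,\xi_n<\kappa^+$, the branches $b_{\xi_1},\dots,b_{\xi_n}$ pairwise split in $K$ below some level $\alpha_0<\kappa$, so above $\alpha_0$ the subtrees $T_{\xi_1},\dots,T_{\xi_n}$ are pairwise level-disjoint. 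Given a hypothetical $\kappa$-antichain $A$ in $T_{\xi_1}\times\cdots\times T_{\xi_n}$, I would reduce to a single antichain-like subset of $\mathbf T$ by coding each $n$-tuple via its $K$-labels (which are finite unions of level-$\alpha$ labels) and invoke the hitting feature of $\vec C$ coordinate-by-coordinate; the level-disjointness from $K$ guarantees that the sealed $n$-tuple is genuinely a witness to $A$ being bounded.

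The main obstacle is the product step. The proxy principle's second parameter being merely $2$ means that only a single subset can be hit at a time, so the classical product-Souslin argument (which typically demands a larger hitting width) must be reconfigured. The trick is that the Kurepa tree $K$ is supplying the combinatorial diversity that compensates for the narrow proxy width: the $\kappa^+$ pairwise-splitting branches of $K$ ensure that coordinates of any supposed product antichain cannot cohere, so that the single-set hitting at the $\mathbf T$ level suffices to refute a product antichain once one passes above the splitting level. Engineering the bookkeeping between $\vec C$, $K$, and $\mathbf T$ at every limit level so that this synchronization goes through uniformly is where the real work of the proof resides.
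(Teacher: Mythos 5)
You've got the right scaffolding, and it matches the paper's Theorem~\ref{thm44}: index the family of subtrees by the cofinal branches of $K$, drive the recursion at limit levels with the proxy sequence, and use a label map into $K$ to pick out each subtree. (One caveat: if level $\alpha$ of $\mathbf T$ were genuinely a bijective copy of $K_\alpha$, then each $T_\xi$ would have singleton levels, hence be a chain; what is actually needed, and what the paper does, is a projection of level $\alpha$ onto $K_\alpha$ whose fibers $L^\eta$ are sets of size at least $\omega$.)

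The core of the argument --- making every finite product Souslin --- is not correctly described, and your ``coordinate-by-coordinate'' sealing plan would fail. A maximal antichain $A$ in $T^{\eta_1}\times\cdots\times T^{\eta_n}$ is a set of $n$-tuples, not a product of $n$ antichains; sealing each coordinate separately against a projection of $A$ gives no guarantee of an element of $A$ that all $n$ coordinates jointly extend. What the paper's proof of Theorem~\ref{thm44} actually does is build, at each limit $\alpha$ and each $C\in\mathcal C_\alpha$, a matrix $\mathbb B^C$ of partial branches indexed by the nodes $\eta\in K_\beta$ for $\beta\in C$, and at a sealing ordinal $\beta\in\nacc(C)$ the $\diamondsuit(H_\kappa)$-oracle $\Omega_\beta$ predicts the \emph{entire} tuple $\langle\eta_j\restriction\beta\mid j<n\rangle$ together with $A$ restricted below $\beta$; the construction then picks the $\lhd$-least element $\vec t$ of $\prod_{j<n} L^{\eta_j}$ extending a \emph{single} element of $A$ and routes each coordinate $\vec t(j)$ into the corresponding matrix entry $b^{\alpha,C\cap\beta,\eta_j\restriction\beta}_{x_j}$. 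This simultaneous, coordinated extension of all $n$ coordinates at one sealing ordinal is the idea missing from your sketch. Two of your explanatory remarks are also off: the Kurepa tree does not ``compensate'' for anything in the sealing step (Theorem~\ref{thm44} proves the product claim for an arbitrary normal $K$, Kurepa or not --- the Kurepa hypothesis enters only to supply $\kappa^+$-many indices), and the parameter $2$ in $\p(\kappa,2,{\sq},\kappa)$ is the width $\mu$, not the hitting number; the hitting parameter here is $\theta=\kappa$, which the proof genuinely uses in order to hit the prediction sets $B_i$ for all $i<\alpha$ at the chosen level $\alpha$.
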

We shall also show that this result is optimal in the sense that the tree $\mathbf T$ itself cannot be $\kappa$-Souslin.

In \sect{section:special}, we present a proxy-based construction of a Souslin tree whose square is special. More generally:
\begin{mainthm}\label{thmb} For an infinite cardinal $\lambda$, assuming $\p_\lambda(\lambda^+,2,{\sq},\lambda^+)$, for every positive integer $n$,
there exists a $\lambda^+$-Souslin tree $\mathbf T$ satisfying the following:
\begin{itemize}
\item all $n$-derived trees of $\mathbf T$ are Souslin;
\item all $(n+1)$-derived trees of $\mathbf T$ are special.
\end{itemize}
\end{mainthm}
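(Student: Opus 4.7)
The plan is to follow the microscopic approach of~\cite{paper22}, constructing $\mathbf{T}$ as a downward-closed subtree of ${}^{<\lambda^+}\lambda$ by recursion on its levels, while simultaneously building a coloring $c:\mathbf{T}^{n+1}\to\lambda$ that will witness the specialization of the $(n+1)$-power. Fix a witness $\vec C=\langle C_\alpha\mid\alpha<\lambda^+\rangle$ to $\p_\lambda(\lambda^+,2,\sq,\lambda^+)$, so that each $C_\alpha$ is a club in $\alpha$ of order type at most $\lambda$, the sequence is coherent in the $\sq$-sense ($C_\alpha\cap\bar\alpha=C_{\bar\alpha}$ for every $\bar\alpha\in\acc(C_\alpha)$), and the hitting feature at strength $\lambda^+$ allows us to drive a prescribed action along a club of limit levels.

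Successor steps split each node into two children, matching the parameter $2$. At a limit level $\alpha$, for each designated node $t$ below $\alpha$, we form a canonical branch through $t$ by walking along $C_\alpha$; the $\sq$-coherence guarantees that these branches automatically agree across limit levels. The hitting action is split into two regimes driven by disjoint targets. On a first regime we perform the usual \emph{sealing}: given a guess for an antichain $A$ of pairwise-incomparable $n$-tuples below $\alpha$, we arrange that every node at level $\alpha$ lies above the $i$th component of some element of $A$ for every $i<n$, forcing $A$ to be maximal in the $n$-derived tree (the case $n=1$ yields the classical proxy-style sealing of antichains of $\mathbf{T}$ itself). On the complementary regime we perform \emph{coloring}: for each $(n+1)$-tuple $\vec y$ with components at level $\alpha$, we set $c(\vec y):=\otp(C_\alpha\cap\beta_{\vec y})$, where $\beta_{\vec y}<\alpha$ is a canonically-chosen ``last splitting level'' of $\vec y$ derived from the recursion, and we use the hitting target to schedule a fresh split at level $\alpha$ for each $(n+1)$-tuple inherited from below.

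The main obstacle is coordinating the two regimes so that the resulting $c$ is everywhere-defined and strictly varies along every chain of the $(n+1)$-power. Along an ``easy'' chain $\vec y<\vec y'$ with $\alpha:=\mathrm{level}(\vec y)\in\acc(C_{\beta'})$ for $\beta':=\mathrm{level}(\vec y')$, the coherence $C_{\beta'}\cap\alpha=C_\alpha$ makes monotonicity $c(\vec y')>c(\vec y)$ automatic, but we must also handle chains that skip over such $\alpha$, and this is where the precise scheduling of splits on the coloring regime will have to be tuned, most likely by sensitizing the definition of $\beta_{\vec y}$ to the position of the walking data within $C_\alpha$. A further subtlety is that the sealing action restricts which nodes extend to level $\alpha$, thereby constraining which $(n+1)$-tuples survive to be coloured; separating the two regimes into disjoint stationary targets, as afforded by the hitting strength $\lambda^+$, should keep the interaction manageable. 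Once the construction is complete, $\mathbf{T}$ and its $n$-derived trees are $\lambda^+$-Souslin by the usual proxy-style sealing arguments, and $c:\mathbf{T}^{n+1}\to\lambda$ serves as the desired specialization by the strict monotonicity argued above.
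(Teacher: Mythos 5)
Your proposal is structurally different from the paper's and has a genuine gap precisely where you flag one. You propose to witness specialness of $\mathbf T^{n+1}$ by a coloring $c:\mathbf T^{n+1}\to\lambda$ of the form $c(\vec y)=\otp(C_\alpha\cap\beta_{\vec y})$ for a suitably-chosen $\beta_{\vec y}<\alpha$. This is essentially the Abraham--Shelah--Solovay specialization mechanism from \cite{sh:221}. As the paper points out, that mechanism exploits a $\square_\lambda$-sequence with $\otp(C_\alpha)<\lambda$ \emph{everywhere}, which is available only for $\lambda$ singular. Under the hypothesis $\p_\lambda(\lambda^+,2,{\sq},\lambda^+)$ with $\lambda$ regular, the ladders at ordinals of cofinality $\lambda$ must have order type exactly $\lambda$, and then the ``skipping'' chains you describe cannot be handled: if $\vec y<\vec y'$ with $\alpha:=\dom(\vec y(0))\notin\acc(C_{\beta'})$ (for $\beta':=\dom(\vec y'(0))$), coherence gives no relation between $\otp(C_\alpha\cap\beta_{\vec y})$ and $\otp(C_{\beta'}\cap\beta_{\vec y'})$, and since $\otp(C_\alpha)$ may already be $\lambda$ there is no room left in the range to force the monotonicity $c(\vec y)<c(\vec y')$ by ``tuning $\beta_{\vec y}$.'' You acknowledge this as the main obstacle but do not resolve it, and it is not resolvable along the route you chose.

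The paper's proof avoids this entirely. Rather than coloring by an ordinal below $\lambda$, it specializes via an order-preserving map $f:T^{\chi}\to\mathbb Q_\lambda$ where $\mathbb Q_\lambda={}^{<\omega}\lambda\setminus\{\emptyset\}$ is dense with no endpoints and has upper bounds for all subsets of size $<\cf(\lambda)$. The monotonicity of $f$ along chains is maintained at \emph{every} step of the recursion (not only at hitting levels) via the notion of \emph{elevators} between levels and the auxiliary maps $\varphi_0,\varphi_1,\varphi_2$ on $\mathbb Q_\lambda$, together with the coordination condition between levels. The proxy hitting feature is reserved exclusively for sealing antichains of the $n$-derived trees — there are no ``two regimes'' of hitting, and no coloring step is driven by the hitting. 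Because $\mathbb Q_\lambda$ always leaves room for the elevators to do their work, this construction applies uniformly to regular and singular $\lambda$, which is precisely what the theorem claims and what your $\otp$-based coloring cannot deliver.

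Your sealing half (maximizing antichains of the $n$-derived trees at limit levels via the diamond guesses) is sound and close in spirit to what the paper does, but the specialization half needs to be rebuilt along the $\mathbb Q_\lambda$-elevator lines sketched above.
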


Such an $\aleph_1$-tree (i.e., the case $\lambda=\omega$) was constructed by Abraham and Shelah in \cite[\S2]{Sh:403}
building on the approach from \cite{MR384542,MR419229} of taking generics over countable models;
hence the construction does not generalize to $\lambda$ singular.
A construction for $\lambda$ singular (and $n=1$) was given by Abraham, Shelah and Solovay in \cite[\S4]{sh:221}
exploiting the fact that $\square_\lambda$ for $\lambda$ singular may be witnessed by a $C$-sequence $\langle C_\alpha\mid\alpha<\lambda^+\rangle$
with $\otp(C_\alpha)<\lambda$ for all $\alpha<\lambda^+$. As such, it does not apply to $\lambda$ regular.
The construction that will be given here is the first that works uniformly for $\lambda$ both regular and singular.

\subsection{Notation and conventions}\label{nocon}
Throughout this paper, $\kappa$ denotes a regular uncountable cardinal,
and $H_\kappa$ denotes the collection of all sets of hereditary cardinality less than $\kappa$.
The Greek letters $\lambda,\Lambda,\mu,\nu,\chi,\theta,\vartheta$ will denote (possibly finite) cardinals,
and $\alpha,\beta,\gamma,\delta,\epsilon,\varepsilon,\iota,\sigma,\varsigma,\xi$ will denote ordinals.

The class of all infinite regular (resp.~singular) cardinals is denoted by $\REG$ (resp.~$\SING$),
and we write $\reg(\kappa)$ for $\REG\cap\kappa$.
For a set $X$, write $[X]^\theta$ for the collection of all subsets of $X$ of size $\theta$,
and define $[X]^{<\theta}$ in a similar fashion.

In order to maintain the flow of the text, we decided not to pause to give the definitions of standard objects, giving them in footnotes, instead.

\section{What are the proxy principles, anyway?}
\label{section:what-are-proxy}
\subsection{Motivation}\label{sub:motivation}
Recall that a graph $\mathcal G$ is a pair $(V,E)$ where $E\s [V]^2$,
and that the chromatic number of $\mathcal G$, denoted $\chr(\mathcal G)$, is the least cardinal $\theta$ for which there exists a coloring $c:V\rightarrow\theta$
such that for every $\{x,y\}\in E$, $c(x)\neq c(y)$.
We say that a graph $\mathcal{G}$ is \emph{countably chromatic} iff $\chr(\mathcal{G})\leq\aleph_0$;
otherwise, it is \emph{uncountably chromatic}.

A \emph{Hajnal--M\'at\'e graph} is a graph $\mathcal G=(V,E)$ in which $V=\omega_1$
and, for every $\alpha<\omega_1$,
the set $A_\alpha:=\{ \beta<\alpha\mid \{\alpha,\beta\}\in E\}$ is either finite or a cofinal subset of $\alpha$ of order-type $\omega$.

Martin's axiom at the level of $\aleph_1$ implies that all Hajnal--M\'at\'e graphs are countably chromatic
(see~\cite[Proposition~31G]{fremlin1984consequences}),
and the same assertion is also consistent with the continuum hypothesis (see \cite[Theorem~2.1]{Sh:81}).

So, what does it take for $\mathcal G$ to be uncountably chromatic?
It can be verified that the following are equivalent:
\begin{enumerate}
\item $\mathcal G$ is uncountably chromatic;
\item For every partition $\langle B_n\mid n<\omega\rangle$ of $\omega_1$ into uncountable sets,
there is some $n<\omega$ and some $\alpha\in B_n$ such that $A_\alpha$ meets $B_n$;
\item For every partition $\langle B_n\mid n<\omega\rangle$ of $\omega_1$ into uncountable sets,
there is a nonzero $\alpha<\omega_1$ such that for the unique $n<\omega$ with $\alpha\in B_n$, it is the case that $\sup(A_\alpha\cap B_n)=\alpha$.
\end{enumerate}

In particular, if for every sequence $\langle B_n\mid n<\omega\rangle$ of uncountable subsets of $\omega_1$,
there is an $\alpha<\omega_1$ such that $A_\alpha$ meets $B_n$ for every $n<\omega$,
then $\mathcal G$ is uncountably chromatic.

The above connection between the \emph{hitting} property of $\vec A=\langle A_\alpha\mid\alpha<\omega_1\rangle$
and the chromatic number of the associated graph generalizes, as follows.
Let $\vec A=\langle A_\alpha\mid\alpha\in S\rangle$ be a \emph{ladder system}
over some subset $S$ of a regular uncountable cardinal $\kappa$.\footnote{This means that for every $\alpha\in S$, $A_\alpha$ is a subset of $\alpha$ and there is no $\beta<\alpha$ such that $A_\alpha\s\beta$.}
Derive a graph $\mathcal G:=(\kappa,E)$ by letting $E:=\{ \{\alpha,\beta\}\mid \alpha\in S,\allowbreak\,\beta\in A_\alpha\}$.
Then, for every cardinal $\theta$,
$\chr(\mathcal G)>\theta$ provided that for every sequence $\vec B=\langle B_i\mid i<\theta\rangle$ of cofinal subsets of $\kappa$,
there exists some $\alpha\in S$ such that $\bigwedge_{i<\theta} A_\alpha\cap B_i\neq\emptyset$.

Now, what happens if one wants, say, a ladder-system graph on $\omega_2$ of chromatic number $\omega_2$
such that, in addition, all of its smaller subgraphs are countably chromatic?\footnote{This is a nontrivial requirement. By \cite{MR925267}, it is consistent that every graph of size and chromatic number $\omega_2$
has a subgraph of size and chromatic number $\omega_1$.}
This quest for incompactness highlights a second feature that a ladder system may possess, namely, \emph{coherence}.
However, coherence properties are typically imposed upon ladder systems in which the $\alpha^{\text{th}}$ ladder is moreover a \emph{closed} subset of $\alpha$;
these are better known as \emph{$C$-sequences}.\footnote{So in a $C$-sequence, each $C_\alpha$ is a closed subset of $\alpha$ satisfying $\sup(C_\alpha)=\sup(\alpha)$.}
For two sets of ordinals $x,y$, write $x\sq y$ iff $x=y\cap\varepsilon$ for some ordinal $\varepsilon$.
A $C$-sequence $\langle C_\alpha\mid\alpha<\kappa\rangle$ is \emph{coherent} iff
for all $\alpha<\kappa$ and $\bar\alpha\in\acc(C_\alpha)$, it is the case that $C_{\bar\alpha}\sq C_\alpha$.
In order to obtain a graph that may satisfy the desired incompactness property,
we impose an additional constraint on the pairs in $E$, as follows:\footnote{See~\cite[Remark~2.6]{paper28}
for the history of Definition~\ref{CseqGraph} and its connection to the Hajnal--M\'at\'e graphs.}

\begin{defn}[The $C$-sequence graph, \cite{rinot12,paper28}]\label{CseqGraph} Given a $C$-sequence $\vec C=\langle C_\alpha\mid\alpha<\kappa\rangle$,
and a subset $G\s\acc(\kappa)$,
the graph $G(\vec C)$ is the pair $(G,E)$, where
$$E:=\{\{\alpha,\gamma\}\in[G]^2\mid \gamma\in C_\alpha,\,\min(C_\gamma)>\sup(C_\alpha\cap\gamma)\ge\min(C_\alpha)\}.$$
\end{defn}
\begin{remark}\label{motivatenacc} For every pair $\gamma<\alpha$ of vertices that are adjacent in $G(\vec C)$,
it is the case that $\gamma$ is an element of $\nacc(C_\alpha)$.\footnote{For any set $C$ of ordinals, $\nacc(C)$ denotes the set $C\setminus\acc(C)$ of its non-accumulation points.}
\end{remark}

Consider $G(\vec C)$ for a given $C$-sequence $\vec{C}=\langle C_\alpha\mid\alpha<\kappa\rangle$
and subset $G\s\acc(\kappa)$. For an ordinal $\delta<\kappa$, let $G(\vec C)\restriction\delta$ denote the initial-segment graph $({G\cap\delta},\allowbreak{E\cap[\delta]^2})$.
The next fact tells us, in particular, that if $\vec C$ is coherent, then every proper initial segment of $G(\vec C)$ is countably chromatic.

\begin{fact}[{\cite[Lemma~2.11(1)]{paper28}}]\label{fact22}
Let $\chi\in\reg(\kappa)$.
If $C_{\bar\alpha}\sq C_\alpha$ for all $\alpha\in G$ and $\bar\alpha\in\acc(C_\alpha)\cap E^\kappa_\chi$,
then $\chr(G(\vec C)\restriction\delta)\le\chi$ for every $\delta<\kappa$.
\end{fact}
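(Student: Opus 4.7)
The plan is to proceed by transfinite induction on $\delta<\kappa$, producing at each stage a proper coloring $c_\delta:G\cap\delta\to\chi$ of $G(\vec C)\restriction\delta$. The base case $\delta=0$ is vacuous. For the successor case $\delta=\delta'+1$ with $\delta'\in G$, I would extend an inductively given coloring on $G\cap\delta'$ by choosing a value for $c_\delta(\delta')$ that avoids the colors used on all below-neighbors of $\delta'$ in $G(\vec C)$. By Remark~\ref{motivatenacc}, every such below-neighbor $\gamma$ lies in $\nacc(C_{\delta'})$; combined with the edge condition $\min(C_\gamma)>\sup(C_{\delta'}\cap\gamma)\geq\min(C_{\delta'})$ and the coherence hypothesis applied to the accumulation points of $C_{\delta'}$ of cofinality $\chi$, one argues that the palette of colors actually used on these neighbors does not exhaust $\chi$, leaving a valid choice.

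For limit $\delta<\kappa$, fix a continuous increasing cofinal sequence $\langle\delta_i:i<\cf(\delta)\rangle$ in $\delta$ and obtain colorings $c_{\delta_i}$ by the inductive hypothesis. The task is to glue these into a single proper $\chi$-coloring of $G\cap\delta$. Proceeding recursively in $i$, one applies a permutation of $\chi$ to $c_{\delta_{i+1}}$ so that it agrees with $c_{\delta_i}$ on $G\cap\delta_i$, taking direct unions at limit stages in $i$. When $\cf(\delta)\leq\chi$, the regularity of $\chi$ ensures the recursion produces a well-defined global coloring. When $\cf(\delta)>\chi$, one first passes to a sub-cofinal subsequence — using the coherence hypothesis at a cofinality-$\chi$ accumulation point of $\delta$ — to reduce to the previous case.

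The principal obstacle is the compatibility check in the stitching step at limits. The coherence condition $C_{\bar\alpha}\sq C_\alpha$ at cofinality-$\chi$ accumulation points is essential here: it guarantees that when we view the local graph structure near some $\bar\alpha$ from the perspective of some larger $\alpha$, the edge set matches what was seen in the earlier coloring near $\bar\alpha$, so no previously made coloring choice becomes invalid as we move to larger initial segments. Without such coherence, the local neighborhoods of $G(\vec C)$ could fluctuate as the index grows, and the inductive stitching would fail.
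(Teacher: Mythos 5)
There is a genuine gap, and it sits at the heart of your limit step: the assertion that one can ``apply a permutation of $\chi$ to $c_{\delta_{i+1}}$ so that it agrees with $c_{\delta_i}$ on $G\cap\delta_i$'' is false in general. Two proper $\chi$-colorings of the same graph need not induce the same partition of the vertex set into color classes, and hence need not be related by any permutation of $\chi$. For a concrete obstruction, take two non-adjacent vertices $a,b$: one coloring may assign them the same color while another assigns them different colors, and no permutation of $\chi$ reconciles these. Since $c_{\delta_i}$ and $c_{\delta_{i+1}}$ are produced independently by the inductive hypothesis, there is no reason they are conjugate, so the ``direct union'' you want to form at limit stages in $i$ is not available. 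This is not a technical nuisance to be finessed later; the stitching strategy as stated does not work, and the construction of $c_\delta$ for limit $\delta$ has no content.

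The other two steps also rely on assertions that are not substantiated. In the successor step you claim that the colors used by $c_{\delta'}$ on the below-neighbors of $\delta'$ ``do not exhaust $\chi$.'' But the below-neighbors of $\delta'$ form a subset of $\nacc(C_{\delta'})$, which can have cardinality $|\delta'|\geq\chi$, and the inductive hypothesis gives you only \emph{some} proper $\chi$-coloring of $G\cap\delta'$, with no control over what palette it uses on that set; a greedy extension can therefore fail, and making it succeed would require strengthening the inductive hypothesis to carry information about future neighborhoods, which you have not done. Finally, in the case $\cf(\delta)>\chi$ you invoke ``the coherence hypothesis at a cofinality-$\chi$ accumulation point of $\delta$,'' but the hypothesis is about accumulation points of $C_\alpha$ for $\alpha\in G$ — the ordinal $\delta$ serving as the cut-off need not lie in $G$, and may not even be a limit ordinal, so coherence says nothing about the structure near $\delta$; and passing to a cofinal subsequence of $\delta$ does not change $\cf(\delta)$. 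For the record, this paper does not supply its own proof of the fact — it cites \cite[Lemma~2.11(1)]{paper28} — and the argument there does not proceed by stitching independently-produced local colorings; rather, it exhibits a single coloring of $G\cap\delta$ directly, exploiting the coherence hypothesis to show that the relevant neighborhood structure stabilizes along the cofinality-$\chi$ accumulation points, so that a bounded invariant can serve as the coloring.
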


Next, what does it take for the whole of $G(\vec C)$ to have a large chromatic number?
Based on what we saw earlier, one may guess that $\chr(G(\vec C))>\theta$ provided that for every sequence $\vec B=\langle B_i\mid i<\theta\rangle$ of cofinal subsets of $\kappa$,
there exists some $\alpha\in G$ such that $C_\alpha$ meets each of the $B_i$'s. However, due to the particular nature of $E$ (recall Definition~\ref{CseqGraph}),
here we would want $C_\alpha$ to meet each of the $B_i$'s in \emph{two consecutive points}.
Specifically:

\begin{fact}[{\cite[Lemma~2.13]{paper28}}]\label{fact23} For an infinite $\theta<\kappa$,
$\chr(G(\vec C))>\theta$, provided that for every sequence $\vec B=\langle B_i\mid i<\theta\rangle$ of cofinal subsets of $\kappa$,
there exists an $\alpha\in G$
with $\min(C_\alpha)\ge\min(B_0)$
such that, for every $i<\theta$, there are $\beta,\gamma\in C_\alpha\cap B_i$ such that $\gamma=\min(C_\alpha\setminus(\beta+1))$.
\end{fact}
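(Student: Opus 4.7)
The plan is a proof by contradiction. Suppose $\chr(G(\vec C))\le\theta$ and fix a proper coloring $c\colon G\to\theta$ with color classes $D_i:=c^{-1}(i)$. I will construct a sequence $\vec B=\langle B_i\mid i<\theta\rangle$ of cofinal subsets of $\kappa$, arranged so that when the hitting hypothesis is applied to $\vec B$, setting $i^*:=c(\alpha)$ with $B_{i^*}\subseteq D_{i^*}$, the pair $\beta_{i^*},\gamma_{i^*}\in C_\alpha\cap B_{i^*}$ automatically satisfies $c(\gamma_{i^*})=c(\alpha)$. The sole nontrivial edge requirement of Definition~\ref{CseqGraph} for $\{\alpha,\gamma_{i^*}\}$ is then $\min(C_{\gamma_{i^*}})>\sup(C_\alpha\cap\gamma_{i^*})=\beta_{i^*}$, which the construction of $\vec B$ is designed to force.

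Three preparations precede the main application. First, invoking the hitting hypothesis with $B_0$ cofinal of arbitrarily large minimum and all other $B_i:=\kappa$ shows that, for every $\eta<\kappa$, there is $\alpha\in G$ with $\min(C_\alpha)\ge\eta$; hence $G_\eta:=\{\gamma\in G:\min(C_\gamma)\ge\eta\}$ is cofinal in $\kappa$. Second, set $A_i:=\{\eta<\kappa:D_i\cap G_\eta\text{ is cofinal in }\kappa\}$; each $A_i$ is downward-closed in $\eta$, and since $G_\eta=\bigcup_{i<\theta}(D_i\cap G_\eta)$ cannot be a union of $\theta<\cf(\kappa)$ bounded subsets of $\kappa$, $\bigcup_{i<\theta}A_i=\kappa$, so the set $I:=\{i<\theta:A_i=\kappa\}$ is nonempty. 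Third, for $i\in I^c:=\theta\setminus I$, let $\mu_i:=\sup A_i<\kappa$, fix $\eta^*<\kappa$ above all $\mu_i$ (possible since $|I^c|\le\theta<\cf(\kappa)$), and let $\nu_i:=\sup(D_i\cap G_{\eta^*})<\kappa$ and $\nu:=\sup_{i\in I^c}\nu_i<\kappa$. Taking $\min(B_0)>\max(\eta^*,\nu)$ ensures that any witness $\alpha$ to the hitting satisfies $\min(C_\alpha)\ge\eta^*$ and $\alpha>\nu$; were $c(\alpha)=i\in I^c$, then $\alpha\in D_i\cap G_{\eta^*}$ would be bounded by $\nu_i\le\nu<\alpha$, a contradiction. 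Hence $c(\alpha)\in I$ is forced.

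For each $i\in I$, the cofinality of $D_i\cap G_\eta$ for every $\eta<\kappa$ allows a transfinite recursion to produce a cofinal $B_i\subseteq D_i$ with the separation property that whenever $\beta<\gamma$ both lie in $B_i$, $\min(C_\gamma)>\beta$: at stage $\xi$, with $\eta_\xi:=\sup_{\xi'<\xi}b_{\xi'}$, pick $b_\xi\in D_i\cap G_{\eta_\xi+1}$ above $\eta_\xi$. For $i\in I^c$, let $B_i$ be any cofinal subset of $\kappa$. Applying the hitting hypothesis to the resulting $\vec B$ yields $\alpha\in G$ with $i^*:=c(\alpha)\in I$ and consecutive $\beta_{i^*},\gamma_{i^*}\in C_\alpha\cap B_{i^*}$; the separation property of $B_{i^*}$ delivers $\min(C_{\gamma_{i^*}})>\beta_{i^*}=\sup(C_\alpha\cap\gamma_{i^*})$, so $\{\alpha,\gamma_{i^*}\}\in E$. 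Since $\gamma_{i^*}\in B_{i^*}\subseteq D_{i^*}$, this edge is monochromatic, contradicting properness of $c$.

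The main obstacle is the third preliminary step, which pins down $c(\alpha)\in I$ via a judicious choice of $\min(B_0)$; once that is secured, the sparsification of $D_{c(\alpha)}$ and the resulting edge condition fall out mechanically.
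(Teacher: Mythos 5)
The paper does not include its own proof of this statement — it is quoted as a Fact and attributed to \cite[Lemma~2.13]{paper28} — so there is nothing internal to compare against; I evaluate your argument on its merits. Your proof is correct. The three-step preparation is sound: $G_\eta$ is cofinal for every $\eta$ by applying the hypothesis with $B_0$ of large enough minimum (noting $\alpha>\min(C_\alpha)\ge\min(B_0)$); each $A_i$ is an initial segment of $\kappa$, so regularity of $\kappa$ together with $\theta<\kappa$ forces some $A_i=\kappa$; and the choice $\min(B_0)>\max(\eta^*,\nu)$ does pin $c(\alpha)$ into $I$, because otherwise $\alpha$ would lie in the bounded set $D_{c(\alpha)}\cap G_{\eta^*}$. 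The recursive sparsification producing $B_i\subseteq D_i$ with the separation property ($\beta<\gamma$ in $B_i$ implies $\min(C_\gamma)>\beta$) goes through for all $\kappa$ stages since $D_i\cap G_{\eta}$ remains cofinal for $i\in I$. Finally, for the chosen $\alpha$ and $i^*:=c(\alpha)\in I$, the consecutive pair $\beta_{i^*}<\gamma_{i^*}$ in $C_\alpha\cap B_{i^*}$ yields $\sup(C_\alpha\cap\gamma_{i^*})=\beta_{i^*}$ (as $\gamma_{i^*}$ is the successor of $\beta_{i^*}$ in $C_\alpha$), the separation property gives $\min(C_{\gamma_{i^*}})>\beta_{i^*}$, and $\beta_{i^*}\ge\min(C_\alpha)$ trivially since $\beta_{i^*}\in C_\alpha$; hence $\{\alpha,\gamma_{i^*}\}\in E$ is a monochromatic edge, contradicting the properness of $c$. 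No gap found.
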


Altogether, to obtain a graph of the form $G(\vec{C})$ satisfying a desired incompactness property ---
large chromatic number for the whole graph, along with small chromatic number for its proper initial segments ---
it suffices to begin with a $C$-sequence $\vec{C}$ satisfying a coherence property as in the hypothesis of Fact~\ref{fact22}
along with a hitting property as in the hypothesis of Fact~\ref{fact23}.
An axiom asserting the existence of a sequence satisfying a combination of coherence and hitting properties
is what we call a \emph{proxy principle}.

\subsection{The proxy principles}\label{s22}
In order to capture the considerations of the previous subsection while maintaining the flexibility to vary both the
coherence and hitting features as needed to prove various desired results,
one would like to introduce a concise parameterized notation for the proxy principles.
We would like to be able to express something along the following lines:
\begin{itemize}
\item There exists a system $\langle\mathcal C_\alpha\mid\alpha<\kappa\rangle$ with each $\mathcal C_\alpha$ a nonempty collection of closed cofinal subsets of $\alpha$;
\item There is a prescribed bound for how many sets are there at each level, e.g., $|\mathcal C_\alpha|=1$ for every $\alpha<\kappa$ (as in the examples we have seen thus far),
or more generally, for some fixed cardinal $\mu$, $|\mathcal C_\alpha|<\mu$ for every $\alpha<\kappa$;\footnote{Compare this with Jensen's weak square principle $\square^*_\lambda$
\cite[\S5.1]{MR309729} and Schimmerling's generalization $\square_{\lambda,\mu}$ \cite[\S5]{schimmerling1995combinatorial}.
The utility of \emph{multi-ladder systems}, i.e.,
systems in which there is more than one ladder assigned to each level, is demonstrated in \cite[Theorem~4.13]{paper48}.}
\item The elements of any level are compatible with the ones from below, i.e.,
there is a prescribed binary coherence relation $\mathcal R$ (such as $\sq$)
such that, for every $\alpha<\kappa$, every $C\in\mathcal C_\alpha$,
and every $\bar\alpha\in\acc(C)$, there exists a $D\in\mathcal C_{\bar\alpha}$ with $D\mathrel{\mathcal R} C$;
\item For some prescribed cardinal $\theta$, every family $\mathcal B\s[\kappa]^\kappa$ of size $\theta$ gets ``hit'' at some level $\alpha$,
i.e., each $C\in\mathcal C_\alpha$ meets each $B\in \mathcal B$.
Looking at Fact~\ref{fact23}, we may also want the $\alpha$ of interest to come from some prescribed set $G$,
and we may want a meeting that is successful twice in a row, or, more generally, $\sigma$ many times in a row for some prescribed ordinal $\sigma$.
\end{itemize}

The above considerations lead us to the definition of the parameterized proxy principle.
First, let us establish some notational conventions that we shall use throughout the rest of the paper:
\begin{itemize}
\item $\kappa$ is a regular uncountable cardinal;
\item $\nu$ and $\mu$ are cardinals $\le\kappa^+$ (typically, $2\le \nu\le\mu$);
\item $\chi$ and $\theta$ are cardinals $\le\kappa$;
\item $\mathcal R$ is a binary relation over $[\kappa]^{<\kappa}$;
\item $\mathcal S$ is a nonempty collection of stationary subsets of $\kappa$;
\item $\xi$ and $\sigma$ are ordinals $\leq \kappa$.
\end{itemize}

\begin{defn}[\cite{paper22,paper23}]\label{proxy}
The proxy principle $\p_\xi^-(\kappa, \mu, \mathcal R, \theta, \mathcal S, \nu,\sigma)$
asserts the existence of a sequence
$\vec{\mathcal C}=\langle \mathcal C_\alpha \mid \alpha < \kappa \rangle$
such that the following three requirements are satisfied:
\begin{enumerate}
\item for every $\alpha < \kappa$, $\mathcal C_\alpha$ is a nonempty collection of less than $\mu$ many closed subsets $C$ of $\alpha$ with $\sup(C)=\sup(\alpha)$ and $\otp(C)\le\xi$;
\item for all $\alpha < \kappa$, $C \in \mathcal C_\alpha$ and $\bar\alpha\in \acc(C)$,
there is a $D \in \mathcal C_{\bar\alpha}$ such that $D \mathrel{\mathcal R} C$;
\item
for every sequence $\langle B_i \mid i < \theta \rangle$ of cofinal subsets of $\kappa$,
for every $S \in \mathcal S$, there exist stationarily many $\alpha \in S$ for which:\footnote{If $\min\{\theta,\sigma\}>0$,
then merely requiring that ``there exists a nonzero $\alpha \in S$'' has an equivalent effect. See the proof of \cite[Theorem~4.3]{devlin1979variations}.}
\begin{itemize}
\item $\left| \mathcal C_\alpha \right| < \nu$, and
\item
for all $C \in \mathcal C_\alpha$ and $i < \min\{\alpha, \theta\}$:
\begin{equation}\label{hittingeqn}\tag{$\star$}
\sup\{ \beta \in C \mid \suc_\sigma (C \setminus \beta) \subseteq B_i \} = \alpha.
\end{equation}
\end{itemize}
\end{enumerate}
\end{defn}
\begin{remark}\label{remark-succ}
$\suc_\sigma(D) := \{ \delta\in \nacc(D)\mid 0<\otp(D\cap\delta)\le\sigma\}$
is the set of the first $\sigma$ many successor elements of $D$, should they exist.
In particular, for every $\beta\in C$ such that $\sup(\otp(C\setminus\beta))\ge\sigma$,
$\suc_\sigma(C\setminus\beta)$ is nothing but the next $\sigma$ many successor elements of $C$ above $\beta$.
In the special case $\sigma=1$, requirement~\eqref{hittingeqn} above is
equivalent to asserting that $\sup(\nacc(C) \cap B_i) = \alpha$.
\end{remark}
\begin{remark} One should view Clause~(3) of Definition~\ref{proxy} as a genericity-type feature.
This is because the forcing to add a sequence satisfying Clauses (1) and (2) via bounded approximations
will introduce one also satisfying Clause~(3) with $\theta=\kappa$, $\nu=\mu$, and arbitrarily large $\sigma<\kappa$ (see \cite[Lemma~3.9]{MR3724382} for a proof template).
\end{remark}

One can consider the proxy principle's eight parameters together as a \emph{vector of parameters}
$(\xi,\kappa,\mu,\mathcal R,\theta,\mathcal S,\nu,\sigma)$,\footnote{Some of our friends complained that our principle has $8$ parameters. Our response: $\infty^8=\infty$.}
and then divide the vector's components into three groups,
according to the clause of Definition~\ref{proxy} in which each parameter first appears.
The first three parameters $\xi,\kappa,\mu$ are in Clause~(1),
which amounts to saying that $\vec{\mathcal C}$ is a \emph{$\xi$-bounded $\mathcal C$-sequence} over $\kappa$ of \emph{width} less than $\mu$.
The fourth parameter $\mathcal R$ appears in Clause~(2), which amounts to saying that $\vec{\mathcal C}$ is $\mathcal R$-coherent.
The remaining parameters capture the hitting characteristics of $\vec{\mathcal C}$:
$\theta$ tells us how many sets can be hit simultaneously,
each element of $\mathcal S$ prescribes
the range in which hitting must take place,
$\nu$ forces the width to be locally small upon a successful hit,
and $\sigma$ sets a minimum for the number of serial successful meets.

The special case $\xi=\kappa$ imposes no order-type restriction on $\vec{\mathcal C}$, in which case we can freely omit it,
writing $\p^-(\kappa,\pvec)$ instead of $\p^-_\xi(\kappa,\pvec)$.
A small $\xi$ is indeed stronger (see~\cite[Lemma~3.13]{MR2013395}), and
imposing it enables stronger properties in the constructed object (see~\cite[Main Theorem]{MR4833803}).
In case $\kappa=\lambda^+$ is a successor cardinal,
it is tempting to view $\xi:=\lambda$ as the ultimate requirement; however, there are scenarios in which a particular choice of $\xi$ with $\lambda<\xi<\lambda^+$
turns out to be the optimal one (see \cite[\S3.3]{paper32}).

The special case $\mu=2$ implies that each $\mathcal C_\alpha$ is a singleton, say $\{C_\alpha\}$,
in which case we identify the $\mathcal C$-sequence $\langle \{C_\alpha\}\mid \alpha<\kappa\rangle$
with the $C$-sequence $\langle C_\alpha\mid\alpha<\kappa\rangle$. On the other extreme is
the case $\mu=\kappa^+$, which may seem pointless, but is nevertheless valuable when combined with a small value for $\nu$ (see \cite[Theorems 7.2 and 7.6]{paper45}),
or with an arithmetic hypothesis (see \cite[Theorem~5.14]{paper53}).

The basic coherence relation $\mathcal{R}$ is the \emph{end-extension} relation, $\sq$,
introduced in Subsection~\ref{sub:motivation},
indicating that $\cvec{C}$ is a \emph{coherent $\mathcal{C}$-sequence}.
A close examination of proxy-based constructions reveals that full coherence is not always necessary,
and the $\sq$ relation can be weakened in several ways, as follows.

First, considering some $C\in\bigcup_{\alpha<\kappa}\mathcal{C}_\alpha$ and some $\bar\alpha\in\acc(C)$,
it may be that all we require is for some $D\in\mathcal{C}_{\bar\alpha}$ to agree with $C$
\emph{at the final approach to~$\bar\alpha$}.
If this is the case, then the construction will work just as well from a $\sq^*$-coherent
instance of the proxy principle, where $D \sq^* C$ iff
there is some $\epsilon < \sup(D)$ such that
${D\setminus\epsilon}\sq{C\setminus\epsilon}$.

In another direction,
some proxy-based constructions can be designed to require genuine coherence only for
\emph{some} of the clubs in $\cvec{C}$, or only at \emph{some} of their accumulation points.\footnote{Recall Fact~\ref{fact22}.}
Indeed, there are contexts in which, for some infinite cardinal $\chi$,
there is no need to require coherence for clubs of order-type $<\chi$,
or possibly, there is no need to require coherence at accumulation points of cofinality $<\chi$.
Thus, in such cases we may weaken $\sq$
to either $\sqleft{\chi}$ or $\sq_\chi$,

where for a coherence relation $\mathcal R$:
\begin{itemize}
\item $D \mathrel{_{\chi}{\mathcal R}}C$ iff ((${D}\mathrel{\mathcal{R}}{C}$) or ($\cf(\sup(D))<\chi$)); and
\item $D \mathrel{\mathcal{R}_\chi} C$ iff ((${D} \mathrel{\mathcal{R}} {C}$) or ($\otp(C)<\chi$ and $\nacc(C)$ consists only of successor ordinals)).\footnote{The condition
``$\nacc(C)$ consists only of successor ordinals''
indicates that the club $C$ may be a ``dummy club'' that is not part of the genuine coherence structure of~$\cvec{C}$.
Thus, any construction from $\cvec{C}$ should ensure that the hitting does not occur at such a $C$.}
\end{itemize}
The significance of such a weakening is that
unlike coherent square sequences that are typically refuted by reflection principles,\footnote{Such as large cardinals \cite[Theorem~4.1]{MR1976595}, \cite[Proposition~8]{MR2374763},
strong forcing axioms \cite[Theorem~1.2]{MR2811288} and simultaneous reflection of stationary sets \cite[\S2]{MR3730566}.}
$\sq_\chi$-coherent proxy principles are compatible with a gallery of reflection principles and
provide an effective means of obtaining optimal incompactness results (see for instance \cite[Corollary~1.20]{paper22}, \cite[Theorem~A]{paper28}, and Table~\ref{table:proxy-consistent-models} below).

Note that the extreme case $\mathcal{R} = {\sqleft{\kappa}}$ amounts to saying that no coherence is needed at all,
and we call it the \emph{trivial coherence relation}.
In this case, every $\mathcal{C}_\alpha$ may be shrunk to a singleton, yielding a
proxy sequence with $\mu=2$.

In yet another direction, there are circumstances in which it is helpful to indicate that $\cvec{C}$ \emph{avoids} a particular class of ordinals $\Omega$,
meaning that $\acc(C)\cap\Omega=\emptyset$ for every $C\in\bigcup_{\alpha<\kappa}\mathcal{C}_\alpha$.
This requirement is indicated by prepending $\Omega$ as a superscript to the coherence relation $\mathcal{R}$,
thereby strengthening it to $^\Omega\mathcal{R}$.%
\footnote{By convention, this superscript-prepending is understood as being applied last, so that, for instance, $\sqleftup{\Omega}_\chi$ is to be parsed as $^\Omega(\sq_\chi)$.} %
In the context of walks on ordinals, the utility of avoiding a stationary subset of $\kappa$ is demonstrated by \cite[Theorem~6.2.7]{TodWalks} and \cite[Lemma~6.7]{MR3620068}.
In general, if $\vec{\mathcal C}$ is a $\sqleftup\Omega$-coherent
proxy sequence, then for any $\alpha\in\Omega$, one is free to shrink $\mathcal C_\alpha$ to a singleton,
and this has important ramifications (see \cite[Lemma~3.8]{paper32} and \cite[Corollary~4.27]{paper23}).

The weakest nontrivial value for the hitting parameters is $(\theta,\mathcal S,\nu,\sigma):=(1,\{\kappa\},\allowbreak\mu,1)$;
this minimal amount of hitting ensures that $\cvec{C}$ is unthreadable,\footnote{That is, for every club $D\s\kappa$, there is an $\alpha\in\acc(D)$ such that $D\cap\alpha\notin\mathcal C_\alpha$.} implying in particular that $\kappa$ is not weakly compact.
Furthermore, for every $\chi\in\reg(\kappa)$,
$\p^-(\kappa,\kappa^+,{\sqleft{\chi}^*},1,\{\kappa\},\kappa^+,1)$ implies that $\kappa$ admits a nontrivial $C$-sequence in the sense of \cite[Definition~6.3.1]{TodWalks}.%
\footnote{This is easily seen by taking a purported trivializing club $D$ to any transversal of the proxy sequence, and applying Definition~\ref{proxy}(3) to the cofinal set $B_0 := \acc(D)$, somewhat similar to the proof of \cite[Lemma~3.2]{paper22}. %
This proof also highlights the necessity for successor (i.e., non-accumulation) points of $C$
in \eqref{hittingeqn} of Definition~\ref{proxy}(3).}

We have already seen the utility of a large $\theta$ in the previous subsection,
and here the extreme case $\theta=\kappa$ is understood as a diagonal requirement, where each $C$ in $\mathcal C_\alpha$ is required to hit each $B_i$ for all of the $i$'s that are smaller than $\alpha$.
When one constructs a $\chi$-complete or a $\chi$-free $\kappa$-Souslin tree, it is natural to require $E^\kappa_{\ge\chi}$ to be in $\mathcal S$ (see \cite[Proposition~2.2]{paper26} and~\cite[Theorem~4.12]{paper32}).
An extreme case is requiring $\mathcal S$ to contain \emph{all} stationary subsets of a given stationary $S^*\s \kappa$,
e.g., $S^*:=\{\alpha<\kappa\mid \cf(\alpha)=\cf(|\alpha|)\}$
as was done in \cite[Theorem~3.1]{MR4530628}.
The significance of distinguishing $\nu$ from $\mu$ is highlighted in \cite{paper32}, \cite[\S4.3]{paper23},
and Tables \ref{table:derived-combinatorial} and~\ref{table:mu=kappa;nu=2} below,
where instances of the proxy principle with locally-small width are used to derive optimal results from the existence of nonreflecting stationary sets
in scenarios where we cannot guarantee the desired coherence together with small width overall.
An application of $\nu=2$ may be found in \cite[Theorem~6.17]{paper23}, where the narrowness requirement at the hitting ordinals enables sealing potential automorphisms of a tree;\footnote{In fact, it is open whether
a $\kappa$-Souslin tree constructed from an instance of the proxy principle having $\nu>2$ can be secured to be rigid on a club.}
another application of $\nu=2$ is given by Corollary~\ref{cor46} below.
The utility of $\sigma>1$ is demonstrated by Fact~\ref{fact23} above.
Applications of $\sigma=\omega$ may be found throughout \cite{rinot20}.
Nota bene that in some cases $\sigma=\omega$ implies the existence of a nonreflecting stationary subset of $E^\kappa_\omega$
(see \cite[Theorem~4.1]{MR3724382} for a primary scenario).
\begin{conv}
When defining a sequence $\vec{\mathcal{C}}$ witnessing an instance of the proxy principle,
it suffices to specify $\langle \mathcal{C}_\alpha \mid \alpha\in\acc(\kappa) \rangle$;
in so doing,
we make the tacit assumption that $\mathcal{C}_0 := \{\emptyset\}$ and
$\mathcal{C}_{\beta+1} := \{\{\beta\}\}$ for every $\beta<\kappa$.
\end{conv}

\subsection{Monotonicity}\label{monotonicity}
The reader can verify that the proxy principle satisfies monotonicity properties with respect to most of its parameters, as follows:
\begin{itemize}
\item Any sequence witnessing
$\p_\xi^-(\kappa, \mu, \mathcal R, \theta, \mathcal S, \nu,\sigma)$
remains a witness to the principle if any of $\xi$, $\mu$, or $\nu$ is increased;
if $\theta$ or $\sigma$ is decreased; if $\mathcal R$ is weakened;
if $\mathcal S$ is shrunk;
or if any element of $\mathcal S$ is expanded.
\item The case $\mathcal R={\sq}$ coincides with $\mathcal R={\sqleftboth{\Omega}{\chi}}$ for
$(\Omega,\chi):=(\emptyset,\omega)$;
increasing $\chi$
weakens each of the relations $\sqleftboth{\Omega}{\chi}$, $\sqstarleftboth{\Omega}{\chi}$, $\sqleftup{\Omega}_{\chi}$,
and $^{\Omega}{\sq}^*_\chi$,
while expanding $\Omega$ strengthens the same relations.
\item For $\mathcal R \in \{ {\sq}, {\sq^*} \}$, any $\chi$, and any $\Omega$,
$\p_\xi^-(\kappa,\mu, {^{\Omega}\mathcal{R}_{\chi}},\pvec)$ entails
$\p_\xi^-(\kappa,\mu,\allowbreak{^\Omega_\chi{\mathcal{R}}},\pvec)$.
\end{itemize}

In addition, in some cases, strong instances of the proxy principle follow from apparently weaker ones.
See for instance \cite[Lemmas 3.8 and 3.9 and \S3.2]{paper28}, \cite[Lemmas 4.9 and 5.3]{paper29},
\cite[Lemmas 3.8 and 3.20]{paper32}, and \cite[Theorem 4.15 and Corollary~4.40]{paper23}.

\subsection{Simplifications}\label{simplifications}
To make the parameterized proxy principle more accessible, a few of its main instances have been given abbreviations in the literature.
For $S$ a stationary subset of $\kappa$, the abbreviations are as follows:
\begin{itemize}
\item
$\xbox_\xi^-(S)$ denotes $\p_\xi^-(\kappa,2,{\sq},1,\{S\},2,1)$,
i.e., the instance asserting the existence of a coherent $\xi$-bounded $C$-sequence with a minimal nontrivial hitting feature.
Together with $\diamondsuit(\kappa)$ this enables a very simple construction of a $\kappa$-Souslin tree
(see~\cite[\S2.7]{paper23}).
\item
$\xbox_\xi^*(S)$ denotes $\p_\xi^-(\kappa,\kappa,{\sqleft{\chi}^*},1,\{S\},\kappa,1)$,
where $\chi := \min\{\cf(\alpha) \mid \alpha \in S\cap\acc(\kappa) \}$.
This is a weakening of $\xbox^-_\xi(S)$ in the spirit of Jensen's $\square_\xi^*$ that is nonetheless sufficient for various constructions
(see \cite[Proposition~2.2]{paper26}).
In case that $\kappa$ happens to be $({<}\chi)$-closed,\footnote{A cardinal $\kappa$ is \emph{$\tau$-closed} (for a cardinal $\tau$) iff $\lambda^{\tau}<\kappa$ for every $\lambda<\kappa$.
A cardinal $\kappa$ is \emph{$({<}\chi)$-closed} iff it is $\tau$-closed for every $\tau<\chi$.}
\cite[Theorem~4.39]{paper23} tells us that $\p_\xi^-(\kappa,\kappa,{\sqleft{\chi}^*},1,\{S\},\kappa,1)$
is no weaker than $\p^-_\xi(\kappa,\kappa,\allowbreak{\sq},\allowbreak1,\{S\},\kappa,1)$.

In the special case where $\kappa$ is the successor of a regular cardinal $\xi$
and $S\subseteq E^{\xi^+}_\xi$,
the principle $\xbox_\xi^*(S)$ becomes $\p_\xi^-(\kappa,2,{\sqleft\kappa},1,\{S\},2,1)$
as stated in~\cite[Definition~1.8]{MR4530628}.
This is because $\min\{\cf(\alpha) \mid {\alpha \in S}\}=\xi$,
and no club in a witness for $\xbox_\xi^*(S)$
has accumulation points of cofinality $\ge\xi$,
so here $\sqleft{\chi}^*$ coincides with the trivial coherence relation $\sqleft\kappa$,
thereby allowing $\mu$ to be shrunk to $2$.

\item One may replace the stationary set $S$ by a collection $\mathcal S$ of stationary sets,
and/or add an indication for the width $\mu$, e.g., writing $\xbox_\xi^-(\mathcal S,{<}\mu)$ for $\p_\xi^-(\kappa,\mu,{\sq},1,\mathcal S,\mu,1)$.
\end{itemize}

Whenever possible and in order to reduce an unnecessary load, a convention for the omission of parameters has been established.
As already mentioned earlier, if we omit the parameter $\xi$, then we mean that $\xi=\kappa$, i.e., $\p^-(\kappa,\pvec)$ stands for $\p^-_\kappa(\kappa,\pvec)$.\footnote{Likewise $\boxtimes^-(S),\boxtimes^-(\mathcal S),\boxtimes^*(S)$ and $\boxtimes^*(\mathcal S)$ stand for
$\boxtimes_\kappa^-(S),\ldots,\boxtimes_\kappa^*(\mathcal S)$, respectively.}
Independently, we may omit a \emph{tail} of parameters, as follows:

\begin{itemize}
\item If we omit $\sigma$, then $\sigma = ``{<}\omega"$, which we will discuss shortly;
\item If in addition we omit $\nu$, then $\nu = \kappa^+$;
\item If in addition we omit $\mathcal S$, then $\mathcal S = \{\kappa\}$;
\item If in addition we omit $\theta$, then $\theta = 1$.
\end{itemize}

We are left with discussing our choice for a default value of $\sigma$.

Initially, in \cite[p.~1953]{paper22}, the authors stated that the omission of $\sigma$ would amount to putting $\sigma=1$
as this seemed to be the weakest possible value that is still useful (following the rationale of the other omitted parameters).
Later on, it was realized that this was an illusion,
since all of the applications of $\p^-(\kappa,\ldots)$ at the time were in the context of $\diamondsuit(\kappa)$,
and in this context any instance $\p^-_\xi(\kappa, \pvec,1)$
may be witnessed by a sequence simultaneously witnessing $\p^-_\xi(\kappa, \pvec,n)$ for all $n<\omega$;
that is, $\diamondsuit(\kappa)$ implies that $\p^-_\xi(\kappa, \pvec,1)$ is no weaker than $\p^-_\xi(\kappa, \pvec,{<}\omega)$.\footnote{See \cite[Theorem~4.16(1)]{paper23} for a stronger result.}
Through the work surrounding Fact~\ref{fact23}, one learns to appreciate the possibility of having $\p^-_\xi(\kappa, \pvec,\sigma)$ holding with $\sigma$ slightly greater than $1$.
The hitting feature of Fact~\ref{fact23} --- namely, requiring two consecutive meets of $C_\alpha$ with $B_i$,
but not insisting that the smaller of the two be a non-accumulation point of $C_\alpha$ ---
may be expressed as something like ``$\sigma=\onehalf$'',
but fortunately such an awkward notation can be avoided, as \cite[Theorem~4.15]{paper23} shows that
$\p^-_\xi(\kappa, \pvec,\onehalf)$, $\p^-_\xi(\kappa, \pvec,2)$, and $\p^-_\xi(\kappa, \pvec,{<}\omega)$ are all logically equivalent.

With the revised convention of setting a default value of $\sigma = ``{<}\omega"$ as in \cite[Convention~4.18]{paper23}, a door was opened to proxy-based constructions of $\kappa$-Souslin trees using $\kappa^{<\kappa}=\kappa$ instead
of $\diamondsuit(\kappa)$ (see \cite[Theorems 5.13 and 6.8]{paper23}), and to the second batch of applications mentioned in the paper's introduction (see Page~\pageref{secondbatch}).\footnote{Strictly speaking, $\sigma=1$ suffices for the construction of the said Ulam-type matrix.}

Finally, in order to facilitate the use of the proxy principles in conjunction with other common hypotheses,
we adopt the following:
\begin{itemize}
\item $\p_\xi(\kappa, \mu, \mathcal R, \theta, \mathcal S, \nu,\sigma)$
denotes the conjunction of $\p_\xi^-(\kappa, \mu, \mathcal R, \theta, \mathcal S, \nu,\sigma)$ and $\diamondsuit(\kappa)$;
\item $\p^\bullet_\xi(\kappa, \mu, \mathcal R, \theta, \mathcal S, \nu)$
denotes the conjunction of $\p_\xi^-(\kappa, \mu, \mathcal R, \allowbreak\theta, \mathcal S, \nu,{<}\omega)$ and $\kappa^{<\kappa}=\kappa$.\footnote{See
Definition~5.9 and Corollary~5.14 of~\cite{paper23}. Note that here the value of $\sigma$ is hardcoded to be ${<}\omega$; this is because the obvious generalization to $\sigma\ge\omega$ (together with $\mu<\kappa$) will already imply $\diamondsuit(\kappa)$
(see \cite[Proposition~5.17]{paper23}).}
\end{itemize}

\subsection{To bullet or not to bullet?}\label{subsection:bullet}
In our applications of $\p_\xi(\kappa,\ldots)$, we shall prefer to use a more versatile version $\diamondsuit(H_\kappa)$ of $\diamondsuit(\kappa)$, as follows:
\begin{fact}[{\cite[Lemma~2.2]{paper22}}]\label{diamondhkappa} $\diamondsuit(\kappa)$ is equivalent to the principle $\diamondsuit(H_\kappa)$
asserting the existence of a partition $\langle R_i \mid i < \kappa \rangle$ of $\kappa$
and a sequence $\langle \Omega_\beta \mid \beta < \kappa \rangle$ of elements of $H_\kappa$
such that for all $i<\kappa$, $\Omega \subseteq H_\kappa$ and $p\in H_{\kappa^{+}}$,
the following set is stationary in $\kappa$:
$$B_i(\Omega,p):=\{\beta\in R_i\mid \exists \mathcal M\prec H_{\kappa^+}\,(p\in \mathcal M, \beta=\kappa\cap\mathcal M, \Omega_\beta=\Omega\cap \mathcal M)\}.$$
\end{fact}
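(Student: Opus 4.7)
The plan is to prove the equivalence in two directions. The implication $\diamondsuit(H_\kappa) \Rightarrow \diamondsuit(\kappa)$ is immediate: given $\langle R_i\rangle$ and $\langle \Omega_\beta\rangle$ witnessing $\diamondsuit(H_\kappa)$, set $A_\beta := \Omega_\beta$ if $\Omega_\beta \subseteq \beta$ and $A_\beta := \emptyset$ otherwise. For any $A \subseteq \kappa$, applying the hitting property with $\Omega := A$, $p := A$, and $i := 0$ yields stationarily many $\beta$ with an $\mathcal M$ satisfying $\Omega_\beta = A \cap \mathcal M = A \cap \beta$; hence $A_\beta = A \cap \beta$, as required.

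For the reverse direction, first upgrade $\diamondsuit(\kappa)$ to a \emph{partitioned} form: a standard coding of pairs by single ordinals produces a partition $\langle R_i : i < \kappa\rangle$ of $\kappa$ and a sequence $\langle A_\beta : \beta < \kappa\rangle$ such that for every $i < \kappa$ and every $A \subseteq \kappa$, the set $\{\beta \in R_i : A_\beta = A \cap \beta\}$ is stationary. Since $\diamondsuit(\kappa)$ entails $2^{<\kappa} = \kappa$ and hence $|H_\kappa| = \kappa$, fix a bijection $\pi : \kappa \to H_\kappa$ and define $\Omega_\beta := \pi[A_\beta]$. By the regularity of $\kappa$ and the fact that each image point lies in $H_\kappa$, the transitive closure of $\Omega_\beta$ has cardinality $< \kappa$, so $\Omega_\beta \in H_\kappa$.

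To verify the hitting property for given $i < \kappa$, $\Omega \subseteq H_\kappa$, and $p \in H_{\kappa^+}$, use monotonicity in $p$ to absorb $\pi$ into the transitive closure of $\{p\}$, and set $A := \pi^{-1}[\Omega] \subseteq \kappa$. The partitioned diamond delivers a stationary $S \subseteq R_i$ consisting of $\beta$ with $A_\beta = A \cap \beta$. Intersect $S$ with the club $C$ of those $\beta < \kappa$ for which the Skolem hull $\mathcal M := \operatorname{Hull}^{H_{\kappa^+}}(\beta \cup \{p\})$ satisfies $\mathcal M \cap \kappa = \beta$. For any $\beta \in S \cap C$, since $\pi \in \mathcal M$ and $\mathcal M \cap \kappa = \beta$, the biconditional $x \in \mathcal M \cap H_\kappa \iff \pi^{-1}(x) \in \mathcal M \cap \kappa = \beta$ yields the key identity $\mathcal M \cap H_\kappa = \pi[\beta]$. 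It then follows that $\Omega \cap \mathcal M = \Omega \cap \pi[\beta] = \pi[A \cap \beta] = \pi[A_\beta] = \Omega_\beta$, as needed. The only genuinely delicate point is this identity $\mathcal M \cap H_\kappa = \pi[\mathcal M \cap \kappa]$, which requires $\pi \in \mathcal M$; the monotonicity move that absorbs $\pi$ into $p$ is what permits a single fixed bijection $\pi$ (and hence a single fixed $\langle \Omega_\beta\rangle$) to serve uniformly across all choices of $\Omega$ and $p$.
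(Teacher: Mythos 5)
Your proof is correct, and it follows what is essentially the only natural route for this equivalence: transfer a partitioned $\diamondsuit(\kappa)$-sequence across a bijection $\pi\colon\kappa\to H_\kappa$ (using $|H_\kappa|=\kappa$), and verify the hitting via elementary submodels of $H_{\kappa^+}$ that contain $\pi$ and satisfy $\mathcal M\cap\kappa=\beta$, so that $\mathcal M\cap H_\kappa=\pi[\beta]$. One small wording caveat in an otherwise sound argument: ``absorb $\pi$ into the transitive closure of $\{p\}$'' is not quite the right justification, since an elementary $\mathcal M\prec H_{\kappa^+}$ with $p\in\mathcal M$ need not be transitive, so membership of $\pi$ in $\operatorname{tc}(\{p\})$ does not by itself put $\pi$ into $\mathcal M$; what makes the monotonicity move work is replacing $p$ by the pair $(p,\pi)$, so that $\pi$ is \emph{definable from} $p$ in $H_{\kappa^+}$ and hence lies in every $\mathcal M\prec H_{\kappa^+}$ containing $p$, which is evidently what you intend. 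The ``partitioned $\diamondsuit$'' you invoke at the start is a genuine (and standard) fact, obtained, e.g., by letting $\diamondsuit(\kappa)$ guess pairs consisting of a partition index together with a subset of $\kappa$; a self-contained write-up should spell this step out, but relying on it is reasonable.
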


The way that the combination of $\p_\xi^-(\kappa, \ldots)$ and $\diamondsuit(H_\kappa)$
is typically used in recursive constructions of length $\kappa$ is as follows:
At limit stage $\alpha<\kappa$, one uses the ladders of $\mathcal C_\alpha$ in order to climb up and eventually determine the $\alpha^{\text{th}}$ level of the ultimate object.
Clause~(2) of Definition~\ref{proxy} ensures that this climbing procedure will not reach a dead end.
Finally, Clause~(3) of Definition~\ref{proxy} is invoked with sets $B_i$ that arise from an application of $\diamondsuit(H_\kappa)$, namely $B_i:=B_i(\Omega,p)$
for an educated choice of $\Omega$ and $p$.

Looking at \cite[Definition~5.9]{paper23}, we see that the principle $\p^\bullet_\xi(\kappa, \pvec)$
can be understood as a weakening of $\p_\xi^-(\kappa, \pvec)\land\diamondsuit(H_\kappa)$ that is tailored to hit only sets $B_i$ of the above particular form.

Strictly speaking, $\p^\bullet_\xi(\kappa, \pvec)$ is weaker than $\p_\xi(\kappa,\pvec)$,\footnote{$\p^\bullet_\omega(\omega_1,2,{\sq},\omega_1)$ holds after adding a single Cohen real to a model of $\ch$ (see Table~\ref{table:derived-forcing} below).
If $\diamondsuit(\omega_1)$ failed in the ground model, then
by~\cite[Exercise~VII.H9]{kunen1980set} it remains failing in the extension, so that $\p_\omega(\omega_1,\ldots)$ will fail in the extension.}
but due to the nature of our constructions (as roughly described above), we do not know of any application of $\p_\xi(\kappa,\pvec)$
that cannot be transformed into an application of the weaker principle $\p^\bullet_\xi(\kappa, \pvec)$.
In particular, all of the $\kappa$-trees constructed from $\p_\xi(\kappa,\pvec)$ in Sections \ref{section:Zakrzewski} and \ref{section:special} below,
may as well be constructed assuming $\p^\bullet_\xi(\kappa, \pvec)$, instead.

\subsection{For the adventurous readers}\label{adventuroussubsection}
We mention that there are a few additional values that can be assigned to the parameters of the proxy principle.
First, by letting $\xi:={<}\lambda$, we mean that all ladders in the witnessing proxy sequence have order-type strictly less than $\lambda$.
The fact that singular cardinals may admit a ladder system having small order-type everywhere was exploited by Shelah and his co-authors a long time ago (see \cite{Sh:236,sh:221} and recall the final paragraph of Subsection~\ref{sec11}).
Second, by letting $\xi:={<}$, we mean that the set of lower-regressive levels (see Definition~\ref{Ccharacteristics} below) of the witnessing proxy sequence covers a club.
Third, by letting $\mu:=\infty$ we mean that $|\mathcal C_\alpha|\le|\alpha|$ for every nonzero $\alpha<\kappa$.
Fourth, if we write ${<}\theta$ instead of $\theta$,
then we mean that the proxy sequence simultaneously satisfies Clause~(3) of Definition~\ref{proxy} with $\theta$ replaced by $\vartheta$ for all $\vartheta<\theta$.
An analogous interpretation applies when writing ${<}\sigma$ instead of $\sigma$.
Fifth, by letting $\sigma:={<}\infty$, we mean to replace the assertion of Equation~\eqref{hittingeqn} of Definition~\ref{proxy} by:
$$\forall\sigma<\otp(C)\,\sup\{ \beta \in C \mid \suc_\sigma (C \setminus \beta) \subseteq B_i \} = \alpha.$$

Coming back to $\mu$, we have the following (indexed) strengthening of $\p_\xi^-(\kappa, \mu^+,{\sq},\allowbreak\theta,\mathcal{S})$, which aids in constructions of $\kappa$-trees with a $\mu$-ascent path (see Definition~\ref{ascentlaver} below).
It reads as follows.

\begin{defn}[{\cite[\S4.6]{paper23}}]\label{indexedP}
For an infinite cardinal $\mu<\kappa$,
the principle $\p_\xi^-(\kappa,\allowbreak \mu^{\ind},\allowbreak{\sq},\theta,\mathcal{S})$ asserts the existence of a $\xi$-bounded $\mathcal C$-sequence $\langle \mathcal{C}_\alpha\mid \alpha<\kappa\rangle$
together with a sequence $\langle i(\alpha)\mid \alpha<\kappa\rangle$ of ordinals in $\mu$, such that:
\begin{itemize}
\item for every $\alpha<\kappa$, there exists a canonical enumeration $\langle C_{\alpha,i}\mid i(\alpha)\le i<\mu\rangle$ of $\mathcal C_\alpha$ (possibly with repetition)
satisfying that the sequence $\langle \acc({C}_{\alpha,i})\mid i(\alpha)\le i<\mu\rangle$
is $\s$-increasing with $\bigcup_{i\in[i(\alpha),\mu)}\acc(C_{\alpha,i})=\acc(\alpha)$;
\item for all $\alpha<\kappa$, $i\in[i(\alpha),\mu)$ and $\bar\alpha\in\acc({C}_{\alpha,i})$, it is the case that $i\ge i(\bar\alpha)$ and $C_{\bar\alpha,i}\sq C_{\alpha,i}$;
\item for every sequence $\langle B_\tau\mid \tau<\theta\rangle$ of cofinal subsets of $\kappa$, and every $S\in\mathcal{S}$, there are stationarily many $\alpha\in S$
such that for all $C\in\mathcal C_\alpha$ and $\tau<\min\{\alpha,\theta\}$, $\sup(\nacc(C)\cap B_\tau)=\alpha$.
\end{itemize}
\end{defn}

In addition to coherence as alluded to in the introduction, another concept that is invisible at the level of $\kappa=\aleph_1$
is that of a \emph{proxy-respecting} tree.
This concept arises when one tries to construct a companion $\kappa$-tree $S$ for
a given $\kappa$-tree $T$
in such a way that the product tree $S\otimes T$ becomes Souslin \cite[\S5]{paper58},
or that the cofinal branches of $T$
would injectively embed into the automorphism group of $S$ \cite[\S7]{yadai},
or that a designated reduced power of $S$ would contain a copy of $T$ \cite[\S6]{rinot20}.
The point is that if we were to construct the new $\kappa$-tree $S$ using an instance $\p^-(\pvec)$ of the proxy principle,
then it will be useful to be able to interpret (even if artificially) the other $\kappa$-tree $T$ as an outcome of a similar application of $\p^-(\pvec)$.
This leads to the following definition (that makes use of some concepts that are defined in Subsection~\ref{basics} below).

\begin{defn}[\cite{rinot20}]\label{respecting}
A streamlined $\kappa$-tree $T$
is \emph{$\p_\xi^-(\kappa, \mu,\mathcal R, \theta, \mathcal S,\nu,\sigma)$-respecting}
if there exists a subset $\S\s\kappa$ and a system of mappings
$\langle b ^C:(T\restriction C)\rightarrow {}^\alpha H_\kappa\cup\{\emptyset\}\mid \alpha<\kappa, C\in\mathcal C_\alpha\rangle$ such that:
\begin{enumerate}
\item\label{respectingonto} for all $\alpha\in\S$ and $C\in\mathcal C_\alpha$, $T_\alpha\s \im(b^C)$;
\item $\vec{\mathcal C}=\langle \mathcal C_\alpha\mid\alpha<\kappa\rangle$ witnesses $\p_\xi^-(\kappa, \mu,\mathcal R, \theta, \{S\cap\S\mid S\in \mathcal S\},\nu,\sigma)$;
\item \label{respectcohere} for all sets $D\sq C$ from $\vec{\mathcal C}$ and $x\in T\restriction D$, $b^D(x)=b^C(x)\restriction\sup(D)$.
\end{enumerate}
\end{defn}

Typically, a $\kappa$-Souslin tree obtained from a proxy principle $\p_\xi^-(\pvec)$ with $\nu=2$
using the so-called \emph{microscopic approach} will be $\p^-_\xi(\pvec)$-respecting.
In this case, the witnessing $\S$ is a subset of $\{\alpha\in\acc(\kappa)\mid |\mathcal C_\alpha|=1\}$ in which the hitting must occur,
and $b^C(x)$ (for the unique $C\in \mathcal C_\alpha$ whenever $\alpha\in\S$)
is a distinguished node in $T_\alpha$ extending $x$.
The microscopic nature of the construction ensures that Clause~(3) will hold,
and that for all $C$ and $x$, $b^C(x)$ will be comparable to $x$.

Another case of interest is $\kappa=\lambda^+$ for an infinite regular cardinal $\lambda$
such that $\p_\lambda^-(\kappa, \mu,{\sqleft{\lambda}},\allowbreak\theta, \{E^\kappa_\lambda\},\allowbreak\nu,\sigma)$ holds.
In this case, \emph{every} $\kappa$-tree is $\p_\lambda^-(\kappa, \mu,\allowbreak{\sqleft{\lambda}}, \theta, \{E^\kappa_\lambda\},\allowbreak\nu,\sigma)$-respecting \cite[Lemma~2.22]{yadai}.
In particular, $\diamondsuit(\aleph_1)$ implies that all $\aleph_1$-trees are $\p_\omega^-(\aleph_1, 2,\allowbreak{\sq},\allowbreak \aleph_1, \{\aleph_1\})$-respecting.

Unorthodox examples of respecting trees, including Kurepa trees and the trees recording characteristics of walks on ordinals, may be found in \cite[Theorems 4.10 and 4.15]{rinot21}.

\medskip

We close this subsection by briefly touching upon a dual approach in which instead of formulating an axiom asserting the existence of a $\mathcal C$-sequence of a certain type,
one defines \emph{characteristics} of $\mathcal C$-sequences and studies their impact on the objects derived from them.
Preliminary examples may be found in \cite[Definition~1.1]{paper29}, \cite[Definition~4.2]{paper35}, \cite[Definition~3.13]{paper45},
and \cite[Definition~4.1]{paper46}.
A more systematic study of this approach was recently initiated in \cite{paper71}.
We settle here for including two samples.

\begin{defn}[{\cite[\S4]{paper71}}]\label{Ccharacteristics}
For a $\mathcal{C}$-sequence, $\vec{\mathcal{C}}=\langle \mathcal{C}_\alpha\mid\alpha<\kappa\rangle$:
\begin{itemize}
\item the set of \emph{avoiding levels} of $\vec{\mathcal{C}}$ is the following:
$$A(\vec{\mathcal{C}}):=\{\alpha\in\acc(\kappa)\mid \forall\beta\in(\alpha,\kappa)\forall C\in\mathcal{C}_\beta\,(\alpha\notin\acc(C))\};$$
\item the set of \emph{lower-regressive levels} of $\vec{\mathcal{C}}$ is the following:
$$R(\vec{\mathcal{C}}):=\{\alpha\in\acc(\kappa)\mid \forall\beta\in(\alpha,\kappa)\forall C\in\mathcal{C}_\beta\,(\otp(C\cap\alpha)<\alpha)\}.$$
\end{itemize}
\end{defn}

\subsection{What's next?}\label{subsection27}
As mentioned in the paper's Introduction,
the proxy principles provide a \emph{disconnection}
between the combinatorial constructions and the study of the hypotheses themselves.
This is a well-known approach and is no different from other axioms such as $\diamondsuit$, $\square$ or the P-Ideal Dichotomy (\pid).
In all of these cases, by matching any \emph{application} of the axiom
with an appropriate \emph{configuration} in which the axioms is known to hold,
one obtains a \emph{conclusion} of possible interest.
Arguably, one factor determining the success of an axiom of this sort is the exact cut point at which the disconnection is introduced.
We think that a good cut point is one in which the study of applications and configurations is equally wealthy.
With this view, in the upcoming two sections we shall establish that the proxy principles are indeed a successful collection of axioms
by demonstrating the rich findings in the
two independent sides of this project.
At no point will we try to list all of the resulting conclusions,
as their number has order of magnitude equal to the product of the two.
In particular, we leave to the reader the task of \emph{reconnecting} applications and configurations as needed
or for the joy of verifying that all classical $\diamondsuit$-based Souslin-tree constructions can now be redirected through the proxy principles.\footnote{Bear in mind Subsection~\ref{monotonicity}.}

\newpage
\section{Deriving instances of the proxy principle}\label{section:proxyholds}
In this section, we shall give three tables demonstrating that instances of the proxy principles hold in many different configurations.
For a stationary $S\s\kappa$,
$\refl(S)$ asserts that every stationary subset of $S$ reflects at some ordinal in $E^\kappa_{>\omega}$,
$\ns_\kappa^+\restriction S$ stands for the collection of all stationary subsets of $S$,
and $\ns_\kappa^+$ stands for $\ns_\kappa^+\restriction\kappa$.
We also write $\ch_\lambda$ for the assertion that $2^\lambda=\lambda^+$,
and $\ch(\lambda)$ for the assertion that $2^{<\lambda}=\lambda$.

\begin{table}[H]
\resizebox{\textwidth}{!}{
\begin{tabular}{l|l|l}
Hypothesis & Instance of proxy obtained & Citation \\
\hline\hline
$\clubsuit(S)$ for $\kappa=\sup(S)$ & $\p^-(\kappa,2,{\sqleft{\kappa}},1,\{S\},2,\kappa)$ &\cite[\S5]{paper22} \\
$\clubsuit(S)$ for $\lambda^+=\sup(S)$ & $\p^-_\lambda(\lambda^+,2,{\sqleft{\lambda}},1,\{S\},2,\lambda^+)$
&\cite[Thm~5.1(1)]{paper22} \\

$\clubsuit(E^{\lambda^+}_\lambda)$ for $\lambda\in\REG$
& $\xbox^*_\lambda(E^{\lambda^+}_\lambda)$
&\cite[Thm~5.1(1)]{paper22} \\

$\diamondsuit(S)$ for $\kappa=\sup(S)$ & $\p(\kappa,2,{\sqleft{\kappa}},1,\{S\},2,\kappa)$ &\cite[\S5]{paper22} \\
$\diamondsuit(S)$ for $\lambda^+=\sup(S)$ & $\p_\lambda(\lambda^+,2,{\sqleft{\lambda}},1,\{S\},2,\lambda^+)$
&\cite[Thm~5.1(2)]{paper22} \\

$\diamondsuit(S)$ for $S\subseteq E^{\lambda^+}_{\cf(\lambda)}$, $\lambda^+=\sup(S)$
& $\p_\lambda(\lambda^+,2,{\sqleft{\lambda}},\lambda^+,\{S\},2,{<}\lambda)$
&\cite[Thm~5.6]{paper22} \\

$\diamondsuit(S)$ for $\omega_1=\sup(S)$ & $\p_\omega(\aleph_1,2,{\sq},\aleph_1,\{S\},2,{<}\omega)$
&\cite[Thm~3.7]{paper22} \\

$\diamondsuit(S)$ for $\omega_1=\sup(S)$ & $\p_{\omega^2}(\aleph_1,2,{\sq},\aleph_1,\{S\},2,{<}\omega^2)$
&\cite[Thm~3.6]{paper22} \\

$\diamondsuit^*(E^{\lambda^+}_\lambda)$ for $\lambda\in\REG$
& $\p_\lambda(\lambda^+,2,{\sqleft{\lambda}},\lambda^+, \mathcal{S},2,{<}\infty)$
&\cite[\S4.4]{paper23} \\

$\diamondsuit(S), \lnot\refl(S)$, $\sup(S)=\kappa$ inaccessible
& $\p(\kappa,\kappa,{\sqleftup{S}},1,\{S\},2,\kappa)$ &\cite[Thm~4.26(1)]{paper23} \\
$\sd_\lambda$ for $\lambda\geq\aleph_1$
& $\p_\lambda(\lambda^+,2,{\sq},\lambda^+,\{E^{\lambda^+}_{\cf(\lambda)}\},2,{<}\lambda)$
&\cite[Thm~3.6]{paper22} \\

$\square_\lambda \land \ch_\lambda$, $\lambda\in\REG\setminus\{\aleph_0\}$
& $\p(\lambda^+,2,{\sq}^*,\omega,\{E^{\lambda^+}_\lambda\},2, {<}\omega)$
&\cite[Cor~6.2(2)]{paper22}\\

$\square_\lambda \land \ch_\lambda$, $\lambda\in\SING$
& $\p_\lambda(\lambda^+,2,{\sq},\lambda^+,\{E^{\lambda^+}_{\cf(\lambda)}\},2, {<}\lambda)$
&\cite[Cor~3.10]{paper22}\\

$\square_\lambda \land \ch_\lambda$ for $\lambda\geq\aleph_1$
&$\p_\lambda(\lambda^+,2,{\sq},{<}\lambda,$
&\cite[Cor~3.9]{paper22} \\
&\hfill$\{E^{\lambda^+}_\rho\mid \rho\in\reg(\lambda)\},2,{<}\lambda)$&\\

$\square(\lambda^+,\sq_\chi) \land \ch_\lambda$ for $\lambda\ge\aleph_1$ \text{strong limit}
&$\p(\lambda^+,2,{\sq_\chi},1,$
&\cite[Cor~4.15]{paper29} \\
&\hfill$\{E^{\lambda^+}_\rho\mid \rho\in\reg(\lambda)\},2,{<}\omega)$&\\

$\square(\lambda^+) \land \ch_\lambda$ with $\lambda^{\aleph_0}=\lambda$
& $\xbox^-(E^{\lambda^+}_\omega)$
&\cite[Cor~4.4]{paper24} \\

$\square(\lambda^+) \land \ch_\lambda$ with $\mathfrak b\le\lambda<\aleph_\omega$
& $\xbox^-(E^{\lambda^+}_\omega)$
&\cite[Cor~5.12]{paper51} \\

$\square(\lambda^+,{<}\lambda) \land \ch_\lambda$ with $\lambda^{<\lambda}=\lambda\ge\aleph_1$
& $\forall\rho\in\reg(\lambda)\;{\xbox^*(E^{\lambda^+}_\rho)}$
&\cite[Thm~3.5]{paper37} \\

$\square(\lambda^+) \land \gch$ with $\lambda\geq\aleph_1$
& $\forall\rho\in\reg(\lambda)\;{\xbox^-(E^{\lambda^+}_\rho)}$
&\cite[Cor~4.5]{paper24} \\

$\square(\lambda^+) \land \ch_\lambda$ with $\lambda\geq\beth_\omega$
& $\forall\rho\in\reg(\beth_\omega)\;{\xbox^-(E^{\lambda^+}_\rho)}$
&\cite[Cor~4.7]{paper24} \\

$\boxtimes^-(S)$, $\refl(S)$, $S\in\ns_\kappa^+$, $\diamondsuit(\kappa)$
& $\p(\kappa,2,{\sq},\kappa,\{S\},2,{<}\omega)$ &\cite[Thm~3.11(2)]{paper28} \\
$\square(\lambda^+) \land \ch_\lambda\land\ch(\lambda)$ for $\lambda\in\SING$
& $\p(\lambda^+,2,{\sq},\lambda^+,\{\lambda^+\},2,{<}\omega)$
&\cite[Cor~4.22]{paper29} \\

$\square(E) \land \diamondsuit(E)$, $\kappa=\sup(E)\geq\aleph_2$
& $\forall S\in\ns^+_{\kappa}$
&\cite[Cor~4.19(2)]{paper23} \\
&\hfill$\p(\kappa,2,{\sq}^*,1,\{S\},2,{<}\omega)$&\\

$\square^{\ind}(\lambda^+,\mu) \land \ch_\lambda \land \aleph_0\leq\mu<\lambda$
& $\forall\rho\in\reg(\lambda)\setminus\mu \text{ s.t.~} \lambda^\rho=\lambda$
&\cite[Thm~4.44]{paper23} \\
&\hfill$\p(\lambda^+, \mu^{\ind},{\sq},1,\{E^{\lambda^+}_\rho\})$&\\

$\ch_\lambda, \ns\restriction E^\lambda_\theta$ saturated, $\lambda=\theta^+$, $\theta\in\REG$
& $\p(\lambda^+,2,{\sqleft{\lambda}}^*,\theta,\{E^{\lambda^+}_\lambda\},2, \theta)$
&\cite[Thm~6.4]{paper22} \\

$\lambda\in\REG\setminus\{\aleph_0\},\ch(\lambda), \ch_\lambda, \lnot\refl(E^{\lambda^+}_{\neq\lambda})$
& $\p_\lambda(\lambda^+,\lambda^+,{\sq},{<}\lambda,\{\lambda^+\},2,{<}\lambda)$
&\cite[Thm~A]{paper32} \\

$\lambda\in\REG\setminus\{\aleph_0\},\ch(\lambda), \ch_\lambda, \lnot\refl(E^{\lambda^+}_{\neq\lambda})$
& $\p(\lambda^+,\lambda^+,{\sq^*},1,\{E^{\lambda^+}_\lambda\},2,{<}\omega)$
&\cite[Thm~A]{paper32} \\

$\lambda\in\SING,\ch(\lambda),\ch_\lambda, \square^*_\lambda, \lnot\refl(E^{\lambda^+}_{\neq\cf(\lambda)})$
& $\p_{\lambda^2}(\lambda^+,\lambda^+,{\sq},\lambda^+,\{\lambda^+\},2,{<}\lambda)$
&\cite[Thm~B]{paper32} \\

\VisL, $\kappa$ not weakly compact
& $\p(\kappa,2,{\sq},\kappa, \mathcal{S}^*, 2, \omega)$

&\cite[Cor~1.10(5)]{paper22} \\
\hline
\end{tabular}
}
\smallskip
\caption{Instances of proxy derived from combinatorial hypotheses.
$\mathcal{S}$ stands for $\ns^+_{\lambda^+}\restriction E^{\lambda^+}_\lambda$;
$\mathcal{S}^*$ stands for $\{ E^\kappa_{\geq\chi} \mid \chi\in\reg(\kappa) \land \kappa \text{ is $({<}\chi)$-closed} \}$.}\label{table:derived-combinatorial}
\end{table}

\begin{remark}
By \cite[Corollary~4.13]{paper24}, $\boxtimes^-(\kappa)\land\diamondsuit(\kappa)$
implies $\p(\kappa,2,\allowbreak{\sq}^*,\allowbreak 1,\{S\})$ for every stationary $S\s\kappa$.
\end{remark}

\begin{table}[H]
\resizebox{\textwidth}{!}{
\begin{tabular}{l|l|l|l}
Properties of forcing & Properties of ground model & Instance of proxy obtained & Citation \\
\hline\hline
$\add(\lambda,1)$ & $\ch(\lambda)$
& $\p^-_\lambda(\lambda^+,2,{\sqleft{\lambda}},\lambda^+, \mathcal{S}, 2, {<}\lambda)$
&\cite[Cor~7.4]{paper45} \\

& $\ch(\lambda),\square_\lambda$
& $\p^-_\lambda(\lambda^+,2,{\sq},\lambda^+, \mathcal{S}, 2, {<}\lambda)$
&\cite[Thm~2.3]{rinot12} \\

& $\ch(\lambda),\square_\lambda,\ch_\lambda$
& $\p^\bullet_\lambda(\lambda^+,2,{\sq},\lambda^+, \mathcal{S}, 2, {<}\lambda)$
&\cite[Thm~6.1(11)]{paper23} \\

& $\ch(\lambda),\square_\lambda,\ch_\lambda,\lambda>\aleph_0$
& $\p_\lambda(\lambda^+,2,{\sq},\lambda^+, \mathcal{S}, 2, {<}\lambda)$
&\cite[Thm~4.2(2)]{paper22} \\
\hline
$({<}\lambda)$-distributive, $\kappa$-cc,
& $\ch(\lambda)$ and
& $\p_\lambda(\lambda^+,\infty,{\sq},\lambda^+, $
&\cite[Prop~3.10]{paper26} \\
collapsing $\kappa$ to $\lambda^+$
& $\kappa$ strongly inaccessible $>\lambda$ & \hfill $\mathcal{S}, 2, {<}\infty)$ & \\
\hline
$\lambda^+$-cc, size $\leq\lambda^+$, & & $\p_\lambda(\lambda^+,\infty,{\sq},\lambda^+, $ & \\
preserves regularity of $\lambda$, & $\ch(\lambda)\land\diamondsuit(\lambda^+)$
& \hfill $\mathcal{S}, 2, {<}\infty)$
&\cite[Thm~3.4]{paper26} \\
not $^\lambda\lambda$-bounding & & & \\
\hline
$\lambda^+$-cc, size $\leq\lambda^+$, & $\ch(\lambda)\land\ch_\lambda$ & $\p(\kappa,\infty,{\sq},\kappa, $ & \\
forces $\cf(\lambda)<|\lambda|$ & $\kappa=\lambda^+\ge\aleph_2$ and $S=E^{\kappa}_\lambda$
& \hfill $ {\ns^+_{\kappa}\restriction S}, 2, {<}\infty)$
&\cite[Thm~3.4]{paper26} \\
(e.g., Prikry, & & & \\
\hfill Magidor, Radin) &&&\\
\hline
L\'evy-collapsing $\lambda$ to $\chi$ & $\lambda^{<\chi}=\lambda>\chi \land\ch_\lambda$
& $\p_\chi(\kappa,\infty,{\sq},\kappa, $ & \\
& $\kappa=\lambda^+$ and $S=E^{\kappa}_\lambda$
& \hfill $ {\ns^+_{\kappa}\restriction S}, 2, {<}\infty)$
&\cite[Prop~3.9]{paper26} \\

\hline
\end{tabular}
}
\smallskip
\caption{Instances of proxy obtained in forcing extensions;
in all cases $\lambda$ and $\chi$ stand for infinite regular cardinals,
$\mathcal{S}$ stands for $\ns^+_{\lambda^+}\restriction E^{\lambda^+}_\lambda$,
and the improvement from the parameter $\theta=1$ to $\theta=\kappa$ (or $\theta=\lambda^+$) is secured by~\cite[Lemma~4.32]{paper23}.}
\label{table:derived-forcing}
\end{table}

\begin{remark} In \cite[\S3.3]{paper28}, one can find, for $\chi\in\reg(\kappa)$, a $\chi$-directed-closed and $\kappa$-strategically-closed forcing poset
for introducing a witness to $\p^-(\kappa,2,{\sq_\chi},\kappa,\allowbreak(\ns^+_\kappa)^V,2,\sigma)$.
This has been used in \cite[\S4]{paper28} to prove the compatibility of strong instances of the proxy principle with a gallery of reflection principles, such as Martin's Maximum, Rado Conjecture, Chang's conjecture, the Fodor-type Reflection Principle,
and $\Delta$-reflection.
\end{remark}

\begin{table}[H]
\resizebox{\textwidth}{!}{
\begin{tabular}{l|l|l}
Features of model & Instance(s) of proxy satisfied & Citation \\
\hline\hline
Martin's Maximum
& & \cite[Cor~1.20]{paper22} \\

$\forall\lambda\in\SING\cap\cof(\omega)$\hfill$\lnot\square^*_\lambda$
& $\forall \lambda\in\SING$
& \\

&\hfill$\p_\lambda(\lambda^+,2,{\sq_{\aleph_2}},\lambda^+,\{E^{\lambda^+}_{\cf(\lambda)}\},2,{<}\lambda)$ \\
&&\\
$\forall\lambda\in\REG\setminus\{\aleph_0\}$
& $\forall \lambda\in\REG\setminus\{\aleph_0\}$ \\
$\hfill\diamondsuit(E^{\lambda^+}_\lambda)$, $\lnot\square_{\lambda,\aleph_1}$
&\hfill $\p_\lambda(\lambda^+,2,{\sqleft{\lambda}},\lambda^+,\{E^{\lambda^+}_\lambda\},2, {<}\lambda)$ \\

\hline
$\lambda$ supercompact, $\refl(E^{\lambda^+}_{<\lambda})$,
&$\p_\lambda(\lambda^+,2,{\sq_{\lambda}},\lambda^+,\{E^{\lambda^+}_{\lambda}\},2,{<}\lambda)$
& \cite[Cor~1.24]{paper22} \\
\hfill$\lnot\diamondsuit(E^{\lambda^+}_\lambda)$, $\neg\square_\lambda$&& \\
\hline
$\lambda = \chi^{+\omega}$, $\chi$ supercompact,
&$\p_\lambda(\lambda^+,2,{\sq_{\chi}},\lambda^+,\{E^{\lambda^+}_{\cf(\lambda)}\},2,{<}\lambda)$
& \cite[Cor~4.7]{paper22} \\
\hfill$\lnot\square^*_\lambda$&& \\
\hline
$\lambda=\aleph_\omega$, $\refl(\lambda^+)$
&$\boxtimes^-(\ns_{\aleph_{\omega+1}}^+)$\text{ and }
& \cite[Thm~1.12]{MR3724382} \\
&$\p_{\aleph_\omega}(\aleph_{\omega+1},\aleph_1,{\sq},\aleph_{\omega+1}, \{E^{\aleph_{\omega+1}}_{\aleph_n}\mid n<\omega\})$&\\
\hline
$\lambda\in\REG\setminus\{\aleph_0\}$, $\refl(E^{\lambda^+}_{<\lambda})$
&$\boxtimes^-(\ns_{\lambda^+}^+)$& \cite[Rmk~1.13]{MR3724382} \\
\hline
\end{tabular}
}
\smallskip
\caption{Five models in which $\square_\lambda$ fails,
yet strong instances of the proxy principle at $\lambda^+$ hold. In the last two models, $\gch$ holds.}
\label{table:proxy-consistent-models}
\end{table}

\section{A gallery of Souslin-tree constructions}\label{section:gallery}

As described in the paper's Introduction,
the original catalyst for the formulation of the proxy principles was the desire for a uniform combinatorial construction of $\kappa$-Souslin trees,
and indeed the principles have served this purpose successfully.
In this section we showcase the various Souslin trees that have been built using the proxy principles,
and provide references to where these constructions can be found.

\subsection{The basics}\label{basics}
The reader is probably familiar with the abstract definition of a set-theoretic tree as a poset $(T,{<_T})$ all of whose downward cones are well-ordered.
In this project, we opt to work with a particular form of trees which we call \emph{streamlined},
in which nodes of the tree are (transfinite) sequences,
and the tree-order is nothing but the initial-sequence ordering.
This choice does not restrict our study,
but it does make some of the considerations smoother.\footnote{See~\cite[\S2.3]{paper23} for a comparison of streamlined trees
with abstract trees, highlighting the properties of streamlined trees, the advantages of constructing them,
and the fact that we lose no generality by restricting our attention to them.
Note that the proof of Lemma~2.5(1) there has a minor glitch, but that the statement is correct.
Ur Ya'ar has drawn our attention to~\cite[\S2]{MR168484},
where trees of this form are called \emph{sequential trees},
and where a proof of the aforementioned Lemma can be found.}
For instance, in such a tree $T$, the $\alpha^{\text{th}}$ level of $T$
coincides with the set $T_\alpha:=\{x\in T\mid \dom(x)=\alpha\}$,
and we may likewise define $T\restriction C$ to be $\{x\in T\mid \dom(x)\in C\}$.
Furthermore,
for any $\stree$-increasing sequence $\eta$ of nodes of $T$,
the unique limit of the sequence, which may or may not be in $T$, is $\bigcup\im(\eta)$.

\begin{defn}[{\cite[Definition~2.3]{paper23}}]
A \emph{streamlined tree} is a subset $T \subseteq {}^{<\kappa}H_\kappa$ for some cardinal $\kappa$
such that $T$ is \emph{downward-closed}, that is, $\{ t\restriction\beta \mid \beta<\dom(t) \} \subseteq T$ for every $t \in T$.
The \emph{height} of $T$, denoted $\h(T)$,
is the least ordinal $\alpha$ such that $T_\alpha=\emptyset$.
For any $\alpha\le\h(T)$, an \emph{$\alpha$-branch} 
is a subset $B\s T$ that is linearly ordered by $\stree$ and such that $\{\dom(t)\mid t\in B\}=\alpha$;
$\h(T)$-branches are usually referred to as \emph{cofinal branches through $T$}.

If $\h(T)$ is a limit ordinal,
we denote by $\mathcal B(T)$ the collection of all functions
$f:\h(T)\rightarrow H_\kappa$
such that $\{ f\restriction\beta \mid \beta<\h(T) \}$ is a cofinal branch through $T$.

By a \emph{streamlined subtree} $S$ of a streamlined tree $T$, we mean a subset $S \subseteq T$ that is itself a streamlined tree;
equivalently, a streamlined subtree $S$ of $T$ is a downward-closed subset of the ambient tree $T$.
\end{defn}

Following \cite[Convention~2.6]{paper23}, we shall identify a streamlined tree $T$ with the abstract tree $(T,{\stree})$.

\begin{defn}\label{def42}
\begin{enumerate}
\item A \emph{streamlined $\kappa$-tree} is a streamlined tree $T\s{}^{<\kappa}H_\kappa$ such that
$0<|T_\alpha|<\kappa$ for every $\alpha<\kappa$.
\item A \emph{streamlined $\kappa$-Aronszajn tree} is a streamlined $\kappa$-tree $T$
such that $\mathcal B(T)=\emptyset$.
\item A \emph{streamlined $\kappa$-Kurepa tree} is a streamlined $\kappa$-tree $T$ such that $|\mathcal B(T)|>\kappa$.\footnote{We follow the definition given in~\cite[Definition~II.5.16]{kunen1980set},
which is satisfactory for our purposes.
In some contexts,
it may be useful to impose that a $\kappa$-Kurepa tree is \emph{slim} (see Definition~\ref{def-prop-trees} below),
in order to exclude trivial examples
such as the complete splitting binary tree of height a strongly inaccessible cardinal
(see~\cite[p.~317]{devlin-book}).
}
\item A \emph{streamlined $\kappa$-Souslin} tree is a streamlined $\kappa$-Aronszajn tree $T$
with no \emph{antichains} of size $\kappa$, that is,
for every $A\in[T]^\kappa$, there are distinct $s, t \in A$ that are \emph{comparable}.\footnote{For $s,t \in T$,
we say that $s$ and $t$ are \emph{comparable} iff $s \subseteq t$ or $t \subseteq s$;
otherwise they are \emph{incomparable}.
An \emph{antichain} $A$ in $T$ is a subset $A \subseteq T$ such that for all $s,t \in A$, if $s \neq t$ then $s$ and $t$ are incomparable.}
\end{enumerate}
\end{defn}

Section~2 of~\cite{paper23}, entitled ``How to construct a Souslin tree the right way'',
offers a comprehensive exposition of the subject of constructing $\kappa$\nobreakdash-Souslin trees
and the challenges involved,
culminating in Subsections 2.6--2.7
with a detailed description of a very simple construction of a $\kappa$\nobreakdash-Souslin tree,
proving the following basic result:

\begin{fact}[{\cite[Proposition~2.18]{paper23}}]\label{simple-Souslin-construction}
$\xbox^-(\kappa) \land \diamondsuit(\kappa)$ implies the existence of a $\kappa$-Souslin tree.
\end{fact}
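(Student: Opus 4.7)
The plan is to recursively build a streamlined $\kappa$-Souslin tree $T\subseteq{}^{<\kappa}H_\kappa$ via the microscopic approach, using the coherent $C$-sequence furnished by $\xbox^-(\kappa)$ to coherently populate limit levels, and $\diamondsuit(H_\kappa)$ (equivalent to $\diamondsuit(\kappa)$ by Fact~\ref{diamondhkappa}) to seal potential maximal antichains on the fly. I would fix a witness $\vec C=\langle C_\alpha\mid\alpha<\kappa\rangle$ to $\xbox^-(\kappa)$, together with a partition $\langle R_i\mid i<\kappa\rangle$ of $\kappa$ and a sequence $\langle\Omega_\beta\mid\beta<\kappa\rangle$ as in Fact~\ref{diamondhkappa}. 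The construction would start with $T_0:=\{\emptyset\}$, and at successor stages put $T_{\alpha+1}:=\{t{}^\frown\langle j\rangle\mid t\in T_\alpha,\ j<2\}$, so that every node splits into exactly two immediate successors.

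At a limit $\alpha<\kappa$, I would enumerate $C_\alpha=\langle\gamma_i\mid i<\otp(C_\alpha)\rangle$ in increasing order, and for each $s\in T_{\gamma_0}$ construct by recursion on $i$ a node $s_i\in T_{\gamma_i}$, starting with $s_0:=s$. At a successor step $i+1$, one consults $\Omega_{\gamma_i}$: if $\gamma_i\in R_0$ and $\Omega_{\gamma_i}$ codes a maximal antichain of $T\restriction\gamma_i$, then route the extension of $s_i$ through a canonically chosen comparable member of that antichain and continue up to $s_{i+1}\in T_{\gamma_{i+1}}$; otherwise extend arbitrarily. At a limit $i$, coherence gives $C_{\gamma_i}=C_\alpha\cap\gamma_i$, so the same recursion was already carried out at stage $\gamma_i$, and $\bigcup_{j<i}s_j$ equals the node produced there, which is already present in $T_{\gamma_i}$. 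Finally set $b^\alpha_s:=\bigcup_{i}s_i$ and $T_\alpha:=\{b^\alpha_s\mid s\in T_{\gamma_0}\}$, of cardinality at most $|T_{\gamma_0}|<\kappa$.

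For Souslinity, one supposes for contradiction that $A\subseteq T$ is a maximal antichain with $|A|=\kappa$. A standard closing-off argument shows that $D:=\{\beta<\kappa\mid A\cap(T\restriction\beta)\text{ is maximal in }T\restriction\beta\}$ is a club. Using Fact~\ref{diamondhkappa} with $\Omega:=A\subseteq H_\kappa$ and $p:=T\in H_{\kappa^+}$, the set $B:=B_0(A,T)\cap D$ is cofinal in $\kappa$, and at each $\beta\in B$ the guess $\Omega_\beta$ correctly codes the maximal antichain $A\cap(T\restriction\beta)$. Applying Clause~(3) of $\xbox^-(\kappa)$ to the one-term sequence $\langle B\rangle$ (with $\mathcal S=\{\kappa\}$) yields some $\alpha<\kappa$ with $\sup(\nacc(C_\alpha)\cap B)=\alpha$. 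At every $\gamma_i\in\nacc(C_\alpha)\cap B$ the sealing step was triggered with the genuine antichain $A\cap(T\restriction\gamma_i)$, so $s_{i+1}$ extends a member of $A$. Hence every element of $T_\alpha$ extends a member of $A\cap(T\restriction\alpha)$; propagating this fact through higher levels, maximality of $A$ forces $A\subseteq T\restriction\alpha$, contradicting $|A|=\kappa$.

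The most delicate point will be the matching between the current recursion along $C_\alpha$ at limit $\alpha$ and the levels $T_{\gamma_i}$ already constructed at stages $\gamma_i<\alpha$ using their own ladders $C_{\gamma_i}$. Coherence handles the subcase $\gamma_i\in\acc(C_\alpha)$ seamlessly; the trickier subcase is $\gamma_i\in\nacc(C_\alpha)$ with $\gamma_i$ itself a limit ordinal of $\kappa$, where one must arrange the sealing and ``arbitrary-extension'' rules uniformly enough that the extension produced from any fixed seed at stage $\gamma_i$ coincides with what the current recursion calls for. This uniformity is precisely what the microscopic approach of~\cite[\S2.6--2.7]{paper23} is designed to achieve, and I would import it wholesale.
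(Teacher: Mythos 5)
Your overall plan — a microscopic recursion along the coherent $C$-sequence, $\diamondsuit(H_\kappa)$-guided sealing at $\nacc$ points of the ladder, and the hitting clause to bound a maximal antichain — is the approach of \cite[Proposition~2.18]{paper23}, and you cite the right reference. However, your sketch of the limit stage contains a genuine gap that ``importing the microscopic approach wholesale'' does not repair without altering your setup. You define $T_\alpha:=\{b^\alpha_s\mid s\in T_{\gamma_0}\}$ with $\gamma_0:=\min(C_\alpha)$, so that only the seeds at level $\gamma_0$ are carried up. This is too small. After the usual normalization making $\min(C_\alpha)=0$ for every limit $\alpha$ (as in the proofs of Theorems~\ref{thm44} and~\ref{thm710} here), one gets $T_{\gamma_0}=T_0=\{\emptyset\}$ and hence $|T_\alpha|=1$ at every limit level; those singletons form a chain and your tree carries a cofinal $\kappa$-branch. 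Even without normalizing, the tree fails to be normal — a node at a level strictly between $\gamma_0$ and $\alpha$ has no guaranteed extension into $T_\alpha$ — and this is precisely why the recursion can stall at the case you flag as trickier: when $\gamma_i\in\nacc(C_\alpha)$ is a limit ordinal, $T_{\gamma_i}$ was built from seeds at $T_{\min(C_{\gamma_i})}$ along the unrelated ladder $C_{\gamma_i}$, and nothing forces it to contain a node extending the $s_{i-1}$ produced by the $\alpha$-recursion. What the approach actually does (compare~\eqref{promise} and~\eqref{promise1}) is define a canonical branch $\mathbf b_x^\alpha$ extending \emph{every} node $x\in T\restriction C_\alpha$, and set $T_\alpha:=\{\mathbf b_x^\alpha\mid x\in T\restriction C_\alpha\}$; this is still of size $<\kappa$, guarantees normality, and — together with the $\lhd$-canonical choice rule and coherence — is exactly what makes the $\nacc$-limit step well-defined.

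There is also an off-by-one in the sealing: when extending $s_i\in T_{\gamma_i}$ to $s_{i+1}\in T_{\gamma_{i+1}}$ you should consult $\Omega_{\gamma_{i+1}}$, the guess at the \emph{target} ladder point, since $\Omega_{\gamma_{i+1}}\subseteq T\restriction\gamma_{i+1}$ may contain elements strictly above $s_i$ through which the extension can be routed. As written, you consult $\Omega_{\gamma_i}\subseteq T\restriction\gamma_i$ while $s_i$ already lives at level $\gamma_i$, so there is nothing in $\Omega_{\gamma_i}$ above $s_i$ to route through; moreover, when $\gamma_i$ is a limit ordinal, $s_i$ is a cofinal branch of $T\restriction\gamma_i$ and may extend \emph{no} member of the maximal antichain $\Omega_{\gamma_i}$, so the inference that $s_{i+1}$ extends a member of $A$ does not follow from $\gamma_i\in B$.
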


In~\cite[p.~1965]{paper22}, one finds a comparison between the classical non-smooth approach to Souslin-tree construction
(requiring nonreflecting stationary sets)
and the modern approach using the proxy principles
(enabling construction of a $\kappa$-Souslin tree in models where every stationary subset of $\kappa$ reflects).
Further advantages of the proxy principles in the context of Souslin-tree construction are described in~\cite[\S1]{paper23}.

By a slightly more elaborate construction,
it is possible to weaken the hypotheses of Fact~\ref{simple-Souslin-construction} considerably, as follows:
\begin{fact}[{\cite[Corollary~6.7]{paper23}}]\label{careful-Souslin-construction}
$\p^\bullet(\kappa,\kappa, {\sq^*}, 1, \{\kappa\}, \kappa)$ implies the existence of a $\kappa$-Souslin tree.
\end{fact}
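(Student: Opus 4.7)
The plan is to build a streamlined $\kappa$-Souslin tree $T \subseteq {}^{<\kappa}H_\kappa$ by recursion on levels, with the proxy sequence $\vec{\mathcal{C}} = \langle \mathcal{C}_\alpha \mid \alpha < \kappa \rangle$ driving the limit-level construction and the arithmetic hypothesis $\kappa^{<\kappa} = \kappa$ handling the bookkeeping. Since $\kappa^{<\kappa} = \kappa$ implies $|H_\kappa| = \kappa$, fix at the outset a bijection $\pi : \kappa \leftrightarrow H_\kappa$ to serve as a universal coding device for ``sealing tasks''. Successor levels are trivial: give each $t \in T_\alpha$ two children $t \frown \langle 0 \rangle$ and $t \frown \langle 1 \rangle$. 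At a limit $\alpha$, for each $C \in \mathcal{C}_\alpha$ and each node $t \in T \restriction C$ we construct a cofinal branch $b^C_t : \alpha \to H_\kappa$ extending $t$, and then set $T_\alpha := \{ b^C_t \mid C \in \mathcal{C}_\alpha,\ t \in T \restriction C\}$. Since $|\mathcal{C}_\alpha| < \kappa$ and $|T \restriction \alpha| < \kappa$, the inductive bound $|T_\alpha| < \kappa$ is maintained.

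Each branch $b^C_t$ is defined by a secondary recursion along $C$. At a non-accumulation step $\gamma \in \nacc(C)$, the branch is extended through the next block of levels by performing a scripted sealing action indexed by $\pi(\gamma)$. At an accumulation step $\bar\alpha \in \acc(C)$, the $\sq^*$-coherence furnishes some $D \in \mathcal{C}_{\bar\alpha}$ with $D$ agreeing with $C$ above some $\varepsilon < \bar\alpha$; the corresponding branch $b^D_t$, already defined at a prior stage of the outer recursion, agrees with what we have been assembling along $C$ from level $\varepsilon$ onward, so we take it as the initial segment $b^C_t \restriction \bar\alpha$. The relaxed value $\mathcal{R} = \sq^*$ of the coherence relation suffices here precisely because discrepancies below $\varepsilon$ are irrelevant to what happens cofinally in $\bar\alpha$.

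The crux is to argue that $T$ admits no antichain of size $\kappa$. Suppose for contradiction that $A \subseteq T$ is a maximal antichain with $|A| = \kappa$, and let $B_A := \{\gamma < \kappa \mid \text{some } s \in T_\gamma \text{ extends an element of } A\}$, which is cofinal in $\kappa$ by maximality. Apply Clause~(3) of Definition~\ref{proxy} to the single cofinal set $B_A$ (with $\theta = 1$, $\mathcal{S} = \{\kappa\}$, $\sigma = {<}\omega$): there are stationarily many $\alpha$ at which, for every $C \in \mathcal{C}_\alpha$ and every $n < \omega$, $\sup\{\beta \in C \mid \suc_n(C \setminus \beta) \subseteq B_A\} = \alpha$. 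The sealing script will be designed so that at every such $\alpha$, each branch $b^C_t$ threads through arbitrarily long blocks of consecutive successor steps lying inside $B_A$, forcing $b^C_t$ to extend some element of $A$. Consequently every node of $T_\alpha$ lies above $A$, so $A \subseteq T \restriction \alpha$; but $|T \restriction \alpha| < \kappa$, contradicting $|A| = \kappa$.

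The main obstacle is the absence of a $\diamondsuit(\kappa)$-sequence with which to preselect antichains to seal. Instead, the sealing script must be generic enough to simultaneously handle every cofinal $B \subseteq \kappa$ that can arise as the trace of a maximal antichain in $T$. This is where $\kappa^{<\kappa} = \kappa$ and $\sigma = {<}\omega$ interlock: the former, via $\pi$, enumerates all possible tasks arising in any $<\kappa$-sized initial segment of the tree, and the latter provides arbitrarily long uninterrupted runs of $\nacc(C)$-steps inside the cofinal set targeted by a task, giving each task room to complete in a single block without being interrupted by an accumulation point. Orchestrating the script so that $\sq^*$-coherence survives the merging of branches across different $C \in \mathcal{C}_\alpha$, while every eventual antichain is sealed and $|T_\alpha|$ stays below $\kappa$, is the delicate bookkeeping at the heart of the proof.
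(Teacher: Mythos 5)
Your outline has the right shape: branches $b^C_t$ built by recursion along each $C\in\mathcal C_\alpha$, $\sq^*$-coherence invoked at accumulation points to keep the limit branches inside the tree, a scripted sealing action at $\nacc(C)$-steps, and $\kappa^{<\kappa}=\kappa$ supplying a bijection $\pi:\kappa\leftrightarrow H_\kappa$. (The paper does not itself prove this fact; it cites \cite[Corollary~6.7]{paper23}.) But the sealing argument has a fatal gap. You set $B_A := \{\gamma < \kappa \mid \text{some } s \in T_\gamma \text{ extends an element of } A\}$. By normality of the constructed tree, once $\gamma$ exceeds the height of the lowest element of $A$, such an $s$ exists, so $B_A$ contains a tail of $\kappa$. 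Applying Clause~(3) of Definition~\ref{proxy} to a tail is vacuous --- it gives no control over which ordinals $\gamma$ you land on, hence no reason for the task $\pi(\gamma)$ fired there to have anything to do with $A$ --- and, independently, $\gamma\in B_A$ only says that \emph{some} node at level $\gamma$ extends an element of $A$, not that the node $b^C_t(\gamma)$ your script chooses is such a node. Passing through $B_A$-levels therefore cannot, by itself, ``force $b^C_t$ to extend some element of $A$''.

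What is actually needed is a cofinal set whose membership at $\gamma$ constrains the code $\pi(\gamma)$ to determine the restriction of $A$ at the level where the branch currently sits, so that the scripted action can route through $A$. This is exactly where replacing $\diamondsuit(\kappa)$ by $\kappa^{<\kappa}=\kappa$ becomes delicate, and where $\sigma={<}\omega$ earns its keep: a bijection $\pi$ coming only from $\kappa^{<\kappa}=\kappa$ carries no $\diamondsuit$-like guessing property, so one cannot simply demand $\pi(\gamma)=A\cap(T\restriction\gamma)$ at the ordinals that get hit. The consecutive $\nacc(C)$-hits are what permit the code at one point of a run to refer consistently to data (a level, a partial antichain) tied to an adjacent point of the same run, so that the correct seal fires at the correct height. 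Your writeup explicitly names this bookkeeping as ``the delicate bookkeeping at the heart of the proof'' and then leaves it unperformed; the set $B_A$ you wrote down cannot support any version of it.
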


Recalling the meaning of $\xbox^-(\kappa)$ and $\p^\bullet(\ldots)$ as given in Subsection~\ref{simplifications},
we see that the main improvements of Fact~\ref{careful-Souslin-construction} over Fact~\ref{simple-Souslin-construction}
consist of
weakening the parameter $\mu$ from $2$ to $\kappa$, as well as
weakening the prediction principle $\diamondsuit(\kappa)$ to the arithmetic hypothesis $\kappa^{<\kappa}=\kappa$.

\subsection{Properties of trees}\label{tree-properties}
The literature is rich with additional properties that $\kappa$-trees may possess. Let us discuss some of them.
\begin{defn}\label{def-prop-trees}
A streamlined tree $T$ of height $\kappa$ is said to be:
\begin{itemize}
\item \emph{normal} iff for all $\alpha<\beta<\kappa$ and $x\in T_\alpha$,
there is some $y\in T_\beta$ such that $x\stree y$;\footnote{Trees with this property are also called \emph{well-pruned}; see~\cite[Definition~II.5.10]{kunen1980set}.}
\item \emph{binary} iff $T \s {}^{<\kappa}2$;
\item \emph{$\varsigma$-splitting} (for an ordinal $\varsigma<\kappa$) iff
every node in $T$ admits at least $\varsigma$ many immediate successors;\footnote{For two nodes $x,y$
in a streamlined tree $T$,
we say that $y$ is an \emph{immediate successor} of $x$ iff $x\stree y$ and $\dom(y)=\dom(x)+1$;
equivalently, iff $y = x^\smallfrown\langle\iota\rangle$ for some $\iota$.}
\item \emph{splitting} iff it is 2-splitting;
\item \emph{prolific} iff, for all $\alpha<\kappa$ and $x\in T_\alpha$,
$\{ x^\smallfrown\langle\iota\rangle \mid \iota<\max \{\omega, \alpha\}\}\s T_{\alpha+1}$;
\item \emph{slim} iff $|T_\alpha| \leq \max \{\left|\alpha\right|, \aleph_0\}$ for every ordinal $\alpha$;
\item \emph{$\chi$-complete} iff,
for any $\stree$-increasing sequence $\eta$, of length $<\chi$, of nodes of $T$,
the (unique) limit of the sequence, $\bigcup\im(\eta)$, is also in $T$;
\item \emph{full} iff for every $\alpha\in\acc(\kappa)$, $|\mathcal B(T\restriction \alpha)\setminus T_\alpha|\le1$;
\item \emph{rigid} iff its only automorphism is the trivial one;
\item \emph{$\chi$-coherent} iff $|\{ \alpha\in\dom(x)\cap\dom(y)\mid x(\alpha)\neq y(\alpha)\}|<\chi$
for all $x,y\in T$;
\item \emph{coherent} iff it is $\omega$-coherent;
\item \emph{uniformly homogeneous} iff for all $y\in T$
and $x\in T\restriction\dom(y)$,
the union of $x$ and $y\restriction(\dom(y)\setminus\dom(x))$ (which is usually denoted by $x*y$) is in $T$;
\item \emph{uniformly coherent} iff it is coherent and uniformly homogeneous;
\item \emph{regressive} iff there exists a map $\rho:T\rightarrow T$ satisfying the following:
\begin{itemize}
\item for every non-minimal $x\in T$, $\rho(x)\stree x$;
\item for all $\alpha\in\acc(\kappa)$ and $x\neq y$ from $T_\alpha$,
either $(x,\rho(y))$ or $(\rho(x),y)$ is a pair of incomparable nodes;
\end{itemize}
\item \emph{club-regressive} (respectively, \emph{stationarily-regressive}) iff it is regressive,
as witnessed by a map $\rho$ satisfying the following additional feature:
\begin{itemize}
\item for every $\alpha \in E^\kappa_{>\omega}$, there exists a club (respectively, stationary) subset $e_\alpha\subseteq\alpha$
such that for all incomparable $x$ and $y$ from $T\restriction(e_\alpha\cup\{\alpha\})$,
either $(x,\rho(y))$ or $(\rho(x),y)$ is a pair of incomparable nodes;
\end{itemize}

\item \emph{special}
iff there exists a map $\rho:T\rightarrow T$ satisfying the following:
\begin{itemize}
\item for every non-minimal $x\in T$, $\rho(x)\stree x$;
\item for every $y\in T$, $\rho^{-1}\{y\}$ is covered by less than $\kappa$ many antichains;
\end{itemize}
\item \emph{specializable} iff there exists a forcing extension with the same cardinal structure up to and including $\kappa$,
in which $T$ is special;
\item \emph{almost-Kurepa} iff it is a $\kappa$-tree and $|\mathcal B(T)|>\kappa$
holds in the forcing extension by $(T,{\supseteq})$.
\end{itemize}
\end{defn}

Any $\kappa$-Aronszajn tree contains a $\kappa$-subtree that is normal.
Inspecting the proofs of Facts \ref{simple-Souslin-construction} and~\ref{careful-Souslin-construction},
we observe that the constructed trees themselves are normal,
and indeed all Souslin trees showcased here are normal.
As a result of taking the simplest approach in the construction of Fact~\ref{simple-Souslin-construction},
the tree is also club-regressive,
as explored in~\cite[Proposition~2.3]{paper22}.
\begin{remark}\label{rmk46}
Any special $\kappa$-tree is a $\kappa$-Aronszajn tree that is not $\kappa$-Souslin.
An $\aleph_1$-tree is specializable iff it is Aronszajn.
Coherent trees are regressive;
regressive streamlined subtrees of $^{<\kappa}\omega$ are slim;
regressive $\kappa$-trees contain no $\nu$-Cantor subtrees for any infinite cardinal $\nu$;
stationarily-regressive $\kappa$-trees contain no $\nu$-Aronszajn subtrees for all $\nu\in\reg(\kappa)$;
slim splitting trees are not $\aleph_1$-complete;
a full splitting $\aleph_2$-tree is neither slim nor $\aleph_1$-complete;
the existence of a binary $\kappa$-Souslin tree is equivalent to the existence of a prolific $\kappa$-Souslin tree;
prolific $\kappa$-Souslin trees (are $\omega$-splitting and)
induce strong colorings on $\kappa$ in a very transparent way.
\end{remark}

\begin{defn}[{\cite[Definition~1.2]{rinot20}}]\label{ascentlaver} Suppose that $X$ is a streamlined $\kappa$-tree,
and $\mathcal F\s\mathcal P(\mu)$ for some cardinal $\mu$.
An \emph{$(\mathcal F,X)$-ascent path} through a streamlined tree $T$ of height $\kappa$ is
a system $\vec f =\langle f_x\mid x\in X\rangle$ such that for all $x,y \in X$:
\begin{enumerate}
\item $f_x : \mu\rightarrow T_{\dom(x)}$ is a function;
\item if $x\stree y$, then $\{ i<\mu\mid f_x (i) \stree f_y (i) \}\in \mathcal F$;
\item if $x \neq y$ and $\dom(x) = \dom(y)$, then $\{ i<\mu\mid f_x (i) \neq f_y (i) \}\in \mathcal F$.
\end{enumerate}

If $(X,{\stree})$ is isomorphic to $(\kappa,{\in})$, then $\vec f$ is simply said to be an \emph{$\mathcal F$-ascent path}.
If, in addition, $\mathcal F$ is equal to $\mathcal F^{\bd}_\mu:=\{ Z\s\mu\mid \sup(\mu\setminus Z)<\mu\}$, then $\vec f$ is said to be a \emph{$\mu$-ascent path}.
\end{defn}

The following weakening of $\mu$-ascent path was isolated by L\"ucke:
\begin{defn}[{\cite[Definition~1.3]{lucke}}]\label{ascending}
For a cardinal $\mu$,
a \emph{$\mu$-ascending path} through a streamlined tree $T$ of height $\kappa$ is
a sequence $\vec{f} =\langle f_\alpha\mid \alpha<\kappa \rangle$ such that for all $\alpha<\beta<\kappa$:
\begin{enumerate}
\item $f_\alpha : \mu\rightarrow T_{\alpha}$ is a function;
\item there are $i,j<\mu$ such that $f_\alpha(i) \stree f_\beta(j)$.
\end{enumerate}
\end{defn}
\begin{remark}\label{remark-specializable} Ascent paths and ascending paths are combinatorial features that play an important role in
enabling constructions of both specializable and non-specializable trees.
First, under certain `mismatch' conditions,
these paths prevent a tree from being special.
Specifically, for a tree $T$ of height $\kappa$, any of the following implies that $T$ is non-special:
\begin{enumerate}
\item $T$ admits a $\mu$-ascent path for some $\mu\in\reg(\lambda)\setminus\{\cf(\lambda)\}$,
where $\kappa=\lambda^+$;\footnote{This is what the proof of \cite[Lemma~3]{Sh:279} shows.}
\item $T$ admits a $\mu$-ascending path for some $\mu<\cf(\sup(\reg(\kappa)))$.\footnote{This is \cite[Corollary~1.7]{lucke}.}
\setcounter{condition}{\value{enumi}}
\end{enumerate}

But as these mismatch conditions remain valid in any forcing extension with the same cardinal structure,
they in fact imply that the tree $T$ is non-specializable!
Complementary to that, by~\cite[Theorem~1.19(1)]{MR3724382}, for every singular cardinal $\lambda$,
$\square_\lambda$ yields a special $\lambda^+$-tree with a $\cf(\lambda)$-ascent path.
As the existence of a $\mu$-ascent path through a tree $T$ implies the existence of a $\mu'$-ascending path for all $\mu'\geq\mu$,
it follows that the above mismatch conditions are sharp.

As for the other direction, we have the following:
\begin{enumerate}
\setcounter{enumi}{\value{condition}}
\item Assuming $\lambda^{<\lambda}=\lambda$,
every tree $T$ of height $\lambda^+$ that has no $\mu$-ascending path for every $\mu<\lambda$
is specializable.\footnote{By the forward implication of \cite[Theorem~1.11]{lucke}, under these assumptions,
the forcing poset $\mathbb P_\lambda(T)$ to add a specializing map $f:T\rightarrow\lambda$ via approximations of size less than $\lambda$
has the $\lambda^+$-cc.}
\end{enumerate}

The upshot is that to construct a non-specializable tree, it suffices to construct one with a mismatching ascent/ascending path,
and to construct a specializable tree at the successor $\lambda^+$ of a (regular) cardinal $\lambda=\lambda^{<\lambda}$,
it is necessary and sufficient that the tree admit no narrow ascending path.
\end{remark}

\begin{notation}[$i^\text{th}$ component, {\cite[Notation~4.3]{paper32}}]\label{notationcomp}
For every function $x:\alpha\rightarrow{}^\tau H_\kappa$ and every $i<\tau$,
we let $(x)_i:\alpha\rightarrow H_\kappa$ stand for $\langle x(\beta)(i)\mid \beta<\alpha\rangle$.
\end{notation}
\begin{defn}[{\cite[Definition~4.4]{paper32}}]\label{derived}
Suppose that $T \s {}^{<\kappa}H_\kappa$ is a streamlined tree,
and $\tau$ is a nonzero ordinal.
\begin{itemize}
\item For a sequence $\vec{s}=\langle s_i \mid i < \tau \rangle$ of nodes of $T$,
we let
$$T(\vec{s}):=\{x\in{}^{<\kappa}({}^\tau H_\kappa)\mid\forall i<\tau\,[(x)_i\cup s_i\in T]\},$$
noting that $T(\vec{s})$ is a streamlined subtree of ${}^{<\kappa}({}^\tau H_\kappa)$.
\item A \emph{$\tau$-derived tree of $T$}
is a tree of the form $T(\vec{s})$
for some injective sequence $\vec{s}=\langle s_i \mid i < \tau \rangle$ of nodes of $T$ on which the map $i\mapsto\dom(s_i)$ is constant.
\end{itemize}
\end{defn}

\begin{defn}[{\cite[Definition~4.5]{paper32}}]
A streamlined $\kappa$-tree $T$ is \emph{$\chi$-free} iff for every nonzero $\tau<\chi$, all of the $\tau$-derived trees of $T$ are $\kappa$-Souslin.
$T$ is \emph{free} iff it is $\omega$-free.
\end{defn}

\begin{remark}\label{rmk412} A $\lambda$-free $\lambda^+$-tree is specializable;
$3$-free trees are rigid;
$2$-free $\kappa$-trees are $\kappa$-Souslin and normal;
full normal $\kappa$-Aronszajn trees are rigid on every club \cite[Observation~2.2(4)]{paper62}.
\end{remark}

\begin{defn}[The levels of vanishing branches, \cite{paper48,paper58}]
For a streamlined $\kappa$-tree $T$:
\begin{itemize}
\item $ V^-(T):=\{\alpha\in\acc(\kappa)\mid \mathcal B(T\restriction\alpha)\neq T_\alpha\}$;
\item $ V(T)$ denotes the set of all $\alpha\in\acc(\kappa)$ such that
for every $x\in T\restriction\alpha$, there exists $f\in\mathcal B(T\restriction\alpha)\setminus T_\alpha$
with $x\stree f$.
\end{itemize}
\end{defn}

\begin{remark}\label{rmk414}
$V(T) \subseteq V^-(T)$;
if $T$ is uniformly homogeneous, then $V(T)=V^-(T)$;
if $V(T)$ is cofinal in $\kappa$, then $T$ is normal;
if $T$ is splitting and full, then $V(T)$ is empty;
if $T$ is uniformly coherent, then $V(T)=E^\kappa_\omega$.
\end{remark}

\subsection{Binary vs.~prolific and slim vs.~complete}\label{slimvscomplete}
As demonstrated in~\cite[\S6.1]{paper23},
there is an obvious way of transforming any proxy-based construction
of a prolific $\kappa$-tree into a construction of a binary $\kappa$-tree (or, more generally, a construction of a $\varsigma$-splitting subtree of ${}^{<\kappa}\varsigma$, for any fixed $\varsigma\in[2,\kappa)$),
and vice versa.
In addition, there are abstract translations of $\kappa$-trees into binary $\kappa$-trees as may be found in the appendix of \cite{rinot20}.

Likewise, there is a transparent way of transforming any proxy-based construction
of slim tree into a construction of a complete tree, and vice versa.
This is demonstrated by the construction of a $\chi$-complete $\kappa$-Souslin tree from
$\xbox^-(E^\kappa_{\geq\chi}) \land \diamondsuit(\kappa)$, where $\kappa$ is $({<}\chi)$-closed,
in~\cite[Proposition~2.4]{paper22}.
By taking some extra care in the construction,
\cite[Proposition~2.2]{paper26} shows that we can replace $\xbox^-(E^\kappa_{\geq\chi})$
with the weaker instance $\xbox^*(E^\kappa_{\geq\chi})$.
As a rule of thumb, the construction of slim trees requires $\mu\le\aleph_1$;
on the other hand, for a $\chi$-complete tree we require $\kappa$ to be $({<}\chi)$-closed
and also require the parameter $\mathcal{S}$ to contain some subset (modulo nonstationary) of $E^\kappa_{\geq\chi}$.

\subsection{The tables}\label{thetables}
We now turn to present a few tables summarizing various $\kappa$-Souslin trees constructed in the literature using instances of the proxy principles.
Note that the monotonicity features of the proxy principles
suggest an informal way of comparing two $\kappa$-trees $T$ and $S$ by viewing $T$ as `weaker' than $S$
provided that $T$ can be obtained from a vector of parameters weaker than
the one necessary for the construction of $S$.
This informal understanding becomes more precise through the observation that the content of Remarks \ref{rmk46}, \ref{rmk412} and \ref{rmk414}
indeed corresponds with Subsection~\ref{monotonicity}.

All trees showcased here are streamlined, normal, and splitting.
Throughout the tables in this section,
$\chi$ stands for an infinite regular cardinal $<\kappa$ such that $\kappa$ is $({<}\chi)$-closed.
On a first pass, the reader may simply assume $\chi=\aleph_0$.
In this case, $\sqx$ coincides with $\sq$ (recall Subsection~\ref{monotonicity}), and any mention of $E^\kappa_{\ge\chi}$ may be replaced by $\kappa$, since the two sets are equal modulo nonstationary.
We note that in many of the cited references the trees are constructed from $\p(\pvec)$ with some finitary value of $\sigma$,
but as explained in Subsection~\ref{subsection:bullet},
all such constructions can be carried out from the weaker $\p^\bullet(\pvec)$.

\medskip

Our first table,
Table~\ref{table:mu=nu=kappa},
presents $\kappa$-Souslin trees $T$ constructed from $\p^\bullet(\kappa,\allowbreak\mu,\mathcal{R},\theta,\mathcal{S},\nu)$
with the weak values $\mu=\nu=\kappa$.
In~(3), $X$ is any given streamlined $\kappa$-tree.
In~(4), $S\s \acc(\kappa)\cap E^\kappa_{<\chi}$.
In (5), $S\s\acc(\kappa)$.
In (6), $S\s \acc(\kappa)$ and we also assume that $\kappa$ is strongly inaccessible.

\begin{table}[H]
\begin{tabular}{l|l|c|c|c|l}
&Citation&$\mathcal R$&$\theta$&$\mathcal S$&Type of $\kappa$-Souslin tree\\ \hline\hline

(1)&\cite[Thm~6.8]{paper23}&$\sqleft{\chi}^*$&$1$&$\{E^\kappa_{\geq\chi}\}$& $\chi$-complete\\ \hline

(2)&\cite[Thm~6.32]{paper23}&$\sqleft{\chi}$&$\kappa$&$\{E^\kappa_{\geq\chi}\}$&$\chi$-complete, \\
&&&&& uniformly homogeneous \\\hline

(3)&\cite[Thm~3.7]{paper58}&$\sq$&$1$&$\{\kappa\}$&$V(T)\supseteq V^-(X)\cap E^\kappa_{>\omega}$\\ \hline

(4)&\cite[Thm~4.4]{paper58}&$\sqleftup{S}$&$1$&$\{S\cup E^\kappa_{\ge\chi}\}$&$V(T)\cap E^\kappa_{<\chi}=S$\\
&&&&& \hfill $=V^-(T)\cap E^\kappa_{<\chi}$ \\ \hline

(5)&\cite[Thm~4.3]{paper58}&$\sqleftup{S}$&$1$&$\{\kappa\}$&$V(T)\supseteq S$\\ \hline

(6)&\cite[Thm~4.8]{paper58}&$\sqleftup{S}$&$1$&$\{S\}$&$V(T)=S=V^-(T)$\\ \hline

\end{tabular}
\smallskip
\caption{The case $\mu=\nu=\kappa$.}
\label{table:mu=nu=kappa}
\end{table}

Our second table,
Table~\ref{table:2<mu<kappa},
presents $\kappa$-Souslin trees constructed from $\p^\bullet(\kappa,\mu,\allowbreak\mathcal{R},\theta,\mathcal{S},\nu)$
with a moderate strengthening of the value of $\mu$,
and still no restriction on $\nu$.

\begin{table}[H]
\begin{tabular}{l|c|c|c|c|l}
Citation&$\mu$&$\mathcal R$&$\theta$&$\mathcal{S}$&Type of $\kappa$-Souslin tree\\ \hline\hline

\cite[Thm~6.11]{paper23}&$\mu^{\ind}$&$\sq$&$1$&$\{E^\kappa_{\ge\chi}\}$&$\chi$-complete with a $\mu$-ascent path\\ \hline

See \S\ref{slimvscomplete} above & $\aleph_1$ & $\sq$ & $\kappa$ & $\{\kappa\}$ & slim, uniformly homogeneous \\ \hline

See \S\ref{slimvscomplete} above & $\aleph_1$ & $\sq^*$ & $1$ & $\{\kappa\}$ & slim \\ \hline

\end{tabular}
\smallskip
\caption{Cases where $2<\mu<\kappa$ and $\nu=\kappa$.}
\label{table:2<mu<kappa}
\end{table}

Our third table,
Table~\ref{table:mu=kappa;nu=2}, presents $\kappa$-Souslin trees $T$ constructed from $\p^\bullet(\kappa,\mu,\allowbreak\mathcal{R},\theta,\mathcal{S},\nu)$,
where $\mu$ is assigned the weak value $\kappa$,
but $\nu$ is now strengthened to $2$.
In (1) and~(2), $\lambda<\kappa$ is an infinite cardinal,
and $\mathcal S^*:=\{E^\kappa_{\geq\chi}\cap E^\kappa_{>\Lambda}\mid\Lambda<\lambda\}$;
note that if $\kappa=\lambda^+$ where $\lambda^{<\lambda}=\lambda$,
then the trees obtained are specializable by Remark~\ref{remark-specializable}(3).

\begin{table}[H]
\begin{tabular}{l|l|l|l|c|l}
&{Citation} &$\mathcal R$&$\theta$&$\mathcal S$&Type of $\kappa$-Souslin tree\\ \hline\hline

(1)&\cite[Thm~6.17]{paper23}&$\sqleft{\chi}$&$1$&$\mathcal S^*$&$\chi$-complete, rigid, \\
&&&&&$\forall \Lambda<\lambda$, $T$ has no $\Lambda$-ascending path\\ \hline

(2)&\cite[Thm~6.14]{paper23}&$\sqleft{\chi}^*$& $1$&$\mathcal S^*$&$\chi$-complete,\\
&&&&&$\forall \Lambda<\lambda$, $T$ has no $\Lambda$-ascending path\\ \hline

(3)&\cite[Thm~6.27]{paper23}&$\sqleft{\chi}$&$\kappa$&$\{E^\kappa_{\geq\chi}\}$&$\chi$-complete, $\chi$-free\\ \hline

(4)&Thm~\ref{thm44} below &$\sq$ &$\kappa$ &$\{\kappa\}$ &pairwise-Souslin family of trees \\\hline

\end{tabular}
\caption{The case $\mu=\kappa$ and $\nu=2$.}
\label{table:mu=kappa;nu=2}
\end{table}

Our fourth table,
Table~\ref{table:mu=nu=2}, presents $\kappa$-Souslin trees $T$ constructed from
$\p^\bullet(\kappa,\mu,\allowbreak\mathcal{R},\theta,\mathcal{S},\nu)$
where both $\mu$ and $\nu$ are strengthened to $2$.
In~(1), $X$ stands for a given streamlined $\kappa$-tree.
In~(4), the original paper does not mention being club-regressive, but this can easily be verified.
In~(5), $X$ stands for a given $\p^-(\pvec)$-respecting binary $\kappa$-tree.
In~(6), $X$ stands for a given $\p^-(\pvec)$-respecting $\kappa$-tree with no $\kappa$-sized antichains,
and $\eta\le\chi$.
In~(7), $\kappa=\lambda^+$ is a successor cardinal, $n>1$ is an integer,
and the order-type limitation is $\xi=\lambda$.

\begin{table}[H]
\begin{tabular}{l|l|l|l|c|l}

&{Citation}&$\mathcal R$&$\theta$&$\mathcal S$&Type of $\kappa$-Souslin tree\\ \hline\hline
(1)&\cite[Thm~3.7]{paper58}&$\sq^*$&1&$\{\kappa\}$& $V(T)\supseteq V^-(X)$\\ \hline
(2)&\cite[Prop~2.3]{paper22}&$\sq$&1&$\{\kappa\}$& club-regressive\\ \hline
(3)&\cite[Prop~2.5]{paper22}&$\sq$&$\kappa$&$\{\kappa\}$&club-regressive, uniformly coherent\\ \hline
(4)&\cite[Thm~6.2]{rinot20}&$\sq$&$\kappa$&$\{E^\kappa_{\ge\chi}\}$& club-regressive, $\chi$-free\\ \hline
(5)&\cite[Thm~7.2]{yadai}&$\sq$&$\kappa$&$\{E^\kappa_{\ge\chi}\}$&has $|\mathcal B(X)|$-many automorphisms; \\
&&&&&if $X$ is Kurepa, \\
&&&&&then $T$ is almost-Kurepa\\ \hline
(6)&\cite[Thm~5.10]{paper58}&$\sqleft{\eta}$&$\kappa$&$\{E^\kappa_{\ge\chi}\}$&uniformly homogeneous, \\ &&&&& $E^\kappa_{\geq\chi}$-regressive,\\
&&&&&$\chi$-complete, $\chi$-coherent; \\
&&&&& $X\otimes T$ is $\kappa$-Souslin and \\
&&&&& $\p^-(\kappa,2,{\sqleft{\eta}},\kappa,\{E^\kappa_{\geq\chi}\},2)$-respecting
\\\hline
(7)&Thm~\ref{thm710} below&$\sq$&$\kappa$&$\{\kappa\}$&club-regressive, $n$-free,
\\
&&&&& with special $n$-power \\ \hline

\end{tabular}
\caption{The case $\mu=\nu=2$.}
\label{table:mu=nu=2}
\end{table}

Our last table, Table~\ref{table:sigma=omega}, presents $\kappa$-Souslin trees $T$ with $(\mathcal F^\eta_\Lambda,X)$-ascent paths constructed from
$\p(\kappa,\mu,\mathcal{R},\theta,\mathcal{S},\nu,\sigma)$
with $\mu=\nu=2$ together with the strong value $\sigma = \omega$.
$\mathcal F^\eta_\Lambda$ stands for the filter $\{Z\s\Lambda\mid |\Lambda\setminus Z|<\eta\}$,
and here $\eta\leq\Lambda$ are both in $\reg(\kappa)$.
$X$ stands for a given streamlined $\kappa$-tree.
If $X$ admits a $\kappa$-branch (e.g., $X={}^{<\kappa}1$),
then since $\mathcal{F}^\eta_\Lambda$ is a subset of $\mathcal{F}^\bd_\Lambda$ that projects to $\mathcal{F}^\bd_\eta$,
the tree $T$ admits both a $\Lambda$-ascent path and an $\eta$-ascent path.
By Remark~\ref{remark-specializable}(1)\&(2), then, $T$ is non-specializable provided that
either $\Lambda\neq\cf(\sup(\reg(\kappa)))$ or $\Lambda\neq\eta$.
In (1), (3), and~(6), we require $X$ to be slim;
in (5) and~(6), $X$ must be $\p^-(\pvec)$-respecting.
In~(6), $\chi\leq\eta$.

\begin{table}[H]
\begin{tabular}{l|l|l|l|c|l}

&{Citation}&$\mathcal R$&$\theta$&$\mathcal S$&Type of $\kappa$-Souslin tree\\
& & & & & with an $(\mathcal F^\eta_\Lambda,X)$-ascent path\\ \hline\hline
(1)&\cite[Thm~4.2]{rinot20}&$\sq$&1&$\{\kappa\}$& slim with $\eta=\Lambda=\omega$\\ \hline
(2)&\cite[Thm~4.3]{rinot20}&$\sq$&1&$\{E^\kappa_{\geq\chi}\}$& $\chi$-complete with $\eta=\Lambda=\omega$\\ \hline
(3)&\cite[Thm~5.1]{rinot20}&$\sq_\eta$&$\Lambda$&$\{\kappa\}$& slim\\ \hline
(4)&\cite[Thm~5.3]{rinot20}&$\sq_\eta$&$\Lambda$&$\{E^\kappa_{\geq\chi}\}$& $\chi$-complete\\ \hline

(5)&\cite[Thm~6.3]{rinot20}&$\sq_\eta$&$\kappa$&$\{E^\kappa_{\geq\chi}\}$& $\min\{\chi,\eta\}$-free, $\chi$-complete\\ \hline

(6)&\cite[Thm~6.5]{rinot20}&$\sq_\eta$&$\kappa$&$\{E^\kappa_{\geq\chi}\}$& slim, $\chi$-free\\ \hline
\end{tabular}
\smallskip
\caption{The case $\mu=\nu=2$ and the strong value $\sigma=\omega$.}
\label{table:sigma=omega}
\end{table}

There are a few more tree constructions that do not fit the above tables.
We list them below and refer the reader to the original papers for any missing definitions.

\begin{fact}[{\cite[Theorem~6.1]{rinot20}}]
Suppose that $\p(\kappa, 2,{\sqsubseteq}, \kappa, \{E^\kappa_{\ge\chi}\},2,\omega)$ holds,
and $\kappa$ is (${<}\chi$)-closed.
For every infinite cardinal $\theta$ such that $\theta^+<\chi$,
there is a slim $(\chi,\theta^+)$-free $\kappa$-Souslin tree with an injective $\mathcal F^{\fin}_{\theta}$-ascent path.
\end{fact}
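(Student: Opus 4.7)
Proof proposal:

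The plan is to carry out a microscopic construction by recursion on $\alpha<\kappa$, building a prolific slim streamlined tree $T\subseteq{}^{<\kappa}H_\kappa$ simultaneously with an injective $\mathcal{F}^{\fin}_\theta$-ascent path $\vec f=\langle f_\beta\mid\beta<\kappa\rangle$. The ingredients are a witness $\langle C_\alpha\mid\alpha<\kappa\rangle$ to $\p^-(\kappa,2,{\sq},\kappa,\{E^\kappa_{\ge\chi}\},2,\omega)$ and a $\diamondsuit(H_\kappa)$-sequence as in Fact~\ref{diamondhkappa}.

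Successor stages $\alpha+1$ are handled in the obvious way: extend each node of $T_\alpha$ prolifically with $\max\{\omega,\alpha\}$-many immediate successors, and extend each component $f_\alpha(i)$ by a single canonical step chosen so as to preserve injectivity. At a limit $\alpha$, I take $T_\alpha$ to be the set of limits of cofinal branches $b^{x,\tau,\vec s,i}_\alpha$ through $T\restriction\alpha$, indexed by seeds $(x,\tau,\vec s,i)$ with $x\in T\restriction C_\alpha$, $\tau<\theta^+$, $\vec s$ a same-level $\tau$-sequence from $T\restriction\alpha$, and $i<\theta$. Each such branch is built by climbing along $C_\alpha$ starting at $x$; when $\Omega_\alpha$ codes a putative maximal antichain $A$ of the $\tau$-derived tree $T(\vec s)$, the climb is steered so that the projection of the branch into $T(\vec s)$ meets $A$, exploiting $\sigma=\omega$ to find $\omega$-many consecutive successor points in $C_\alpha$ at which all $\tau$-many component subbranches can be simultaneously moved into $A$. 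The ascent-path value $f_\alpha(i)$ is then set to the limit of a distinguished branch whose climb along $C_\alpha$ is designed to refine the previously-defined $f_\beta(i)$ for $\beta\in C_\alpha$; full coherence $C_{\bar\alpha}=C_\alpha\cap\bar\alpha$ makes this refinement automatic at every $\bar\alpha\in\acc(C_\alpha)$.

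Verification proceeds as follows. Prolificity is built into the successor stage; slimness holds because at each limit $\alpha$ the set of admissible seeds has size at most $|\alpha|$, using $\tau<\theta^+<\chi$, $({<}\chi)$-closedness of $\kappa$, and the inductive slimness of $T\restriction\alpha$. Souslinness of $T$ is the standard elementary-submodel argument using $\diamondsuit(H_\kappa)$ and the diagonal hitting ($\theta=\kappa$) at $E^\kappa_{\ge\chi}$. The same argument, but with $\Omega_\alpha$ additionally coding $\vec s$ and a maximal antichain of $T(\vec s)$, yields Souslinness of every $\tau$-derived tree for each nonzero $\tau<\theta^+$; together with the sealing steps built into the limit stage this supplies the $(\chi,\theta^+)$-freeness. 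The injective $\mathcal{F}^{\fin}_\theta$-ascent-path property reduces to $\{i<\theta\mid f_{\bar\beta}(i)\stree f_\beta(i)\}$ being cofinite for every $\bar\beta<\beta$, and this follows from the canonical climb together with coherence: matching is exact at each $\bar\beta\in\acc(C_\beta)$, and at most finitely many indices can be disturbed by successor-stage bookkeeping in the gaps.

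The main obstacle is harmonizing the sealing requirements for antichains in the main tree and in all $\tau$-derived trees for $\tau<\theta^+$, together with the ascent-path climb, inside a single recursion that still yields a slim tree. The parameter $\sigma=\omega$ is what unlocks the joint sealing: moving a $\tau$-sequence node of $T(\vec s)_\alpha$ onto $A$ demands $\tau$-many coordinated target-hits, which we bundle into a single $\omega$-stretch of successive successor points of $C_\alpha$; feasibility of the bundling is guaranteed by $\tau<\chi$ together with $({<}\chi)$-closedness of $\kappa$, which also ensures the codes of the relevant $(\vec s,A)$ all sit inside $H_\kappa$ for $\diamondsuit(H_\kappa)$ to handle.
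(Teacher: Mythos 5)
Your overall plan---a microscopic recursion, climbing along the witnessing $C$-sequence, and sealing antichains using $\diamondsuit(H_\kappa)$---is the right framework, but two specific steps are wrong. First, the role of $\sigma=\omega$ is misidentified: sealing a maximal antichain $A$ of a derived tree $T(\vec s)$ does \emph{not} require $\tau$-many coordinated hits. A node of $T(\vec s)$ already encodes all $\tau$ coordinates as a single sequence-valued function (Definition~\ref{derived}), so one application of a sealing action such as $\sealantichain$ of Definition~\ref{actions}, at a single nonaccumulation step of $C_\alpha$, moves the entire $\tau$-tuple past $A$; this is exactly how Theorem~\ref{thm710} handles derived-tree antichains with the default $\sigma<\omega$. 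Your proposed bundling cannot work in any case, since $\tau$ ranges over all nonzero cardinals below $\theta^+$ with $\theta$ an arbitrary infinite cardinal satisfying $\theta^+<\chi$, so $\tau$ may be uncountable and cannot be distributed over an $\omega$-stretch. The genuine reason for $\sigma=\omega$ is the ascent path: between consecutive elements $\gamma<\gamma'$ of $\acc(C_\alpha)$, the set $C_\alpha\cap(\gamma,\gamma')$ has order type exactly $\omega$, and each of those $\omega$-many climbing steps may redirect finitely many of the $\theta$-many ascent-path components, so that without a mechanism to progressively lock indices in, infinitely many components could be cumulatively disturbed across the gap, destroying the required cofiniteness. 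Having $\omega$-many consecutive successors of $C_\alpha$ fall in the same $\diamondsuit$-target is what makes the telescoping stabilization possible; this gap in your verification is not closed by the remark that ``at most finitely many indices can be disturbed by successor-stage bookkeeping.''

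Second, the slimness argument fails as stated. Indexing $T_\alpha$ by quadruples $(x,\tau,\vec s,i)$ with $\vec s$ ranging over all same-level injective $\tau$-sequences from $T\restriction\alpha$ produces on the order of $\sum_{\delta<\alpha}|T_\delta|^\tau$ branches, and $\max\{|\delta|,\aleph_0\}^\theta$ can exceed $\max\{|\alpha|,\aleph_0\}$ (for instance $\theta=|\delta|=|\alpha|=\aleph_1$ with $2^{\aleph_1}>\aleph_1$); $({<}\chi)$-closedness of $\kappa$ keeps this below $\kappa$, but not below $|\alpha|$. The remedy, as in the constructions of Theorems~\ref{thm44} and~\ref{thm710}, is to index the limit-level branches only by their starting nodes $x\in T\restriction C_\alpha$ (of which there are at most $\max\{|\alpha|,\aleph_0\}$), and to read the data $(\tau,\vec s,A)$ driving each sealing action not from extra seeds but from the $\diamondsuit(H_\kappa)$-prediction $\Omega_\beta$ together with a bookkeeping surjection $\psi$ consulted inside the climb.
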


\begin{fact}[{\cite[Theorem~6.4]{rinot20}}]
Suppose that $\cf(\nu)=\nu<\theta^+<\chi<\kappa$ are infinite cardinals,
$\kappa$ is (${<}\chi$)-closed, and $\p(\kappa, 2,{\sqsubseteq}, \kappa, \{E^\kappa_{\ge\chi}\},2,\omega)$ holds.
Then there exists a $\nu$-free, $(\chi,\theta^+)$-free, $\chi$-complete $\kappa$-Souslin tree with an injective $\mathcal F^{\nu}_\theta$-ascent path.
\end{fact}
\begin{fact}[{\cite[Theorem~6.6]{rinot20}}]
Suppose that $\cf(\nu)=\nu<\theta^+<\chi<\kappa$ are infinite cardinals,
$\kappa$ is (${<}\chi$)-closed, and $\p(\kappa, 2,{\sqsubseteq}, \kappa, \{E^\kappa_{\ge\chi}\},2,\omega)$ holds.
Then there exists a slim $\nu$-free, $(\chi,\theta^+)$-free, $\kappa$-Souslin tree with an injective $\mathcal F^{\nu}_\theta$-ascent path.
\end{fact}

\begin{fact}[{\cite[Theorem~7.5]{yadai}}]
Suppose that:
\begin{itemize}
\item $\kappa^{<\kappa}=\kappa$;
\item $S\s E^\kappa_{\cf(\theta)}$ is a stationary subset of $\kappa$;
\item $\p^-(\kappa,\kappa,{\sqleftup{S}},1,\{S\},2)$ holds and is witnessed by a sequence $\langle\mathcal C_\alpha\mid\alpha<\kappa\rangle$ for which
$B:=\{ \alpha\in\acc(\kappa)\mid |\mathcal C_\alpha|=1\}$ covers $E^\kappa_{> \cf(\theta)}$, and $\acc(\bigcup\mathcal C_\alpha)\s B$ for every $\alpha\in B$.
\end{itemize}

Then there exists a $\kappa$-Souslin tree with a $\theta$-ascent path.
\end{fact}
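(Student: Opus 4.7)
The plan is to construct simultaneously by recursion on $\alpha<\kappa$ a prolific streamlined $\kappa$-tree $T\s{}^{<\kappa}\kappa$ and an ascent path $\vec f=\langle f_\alpha:\theta\to T_\alpha\mid\alpha<\kappa\rangle$, following the microscopic approach of \cite[\S6]{paper23}. Using $\kappa^{<\kappa}=\kappa$, fix a bookkeeping enumeration $\langle(Z_\beta,\vec y_\beta)\mid\beta<\kappa\rangle$ of $H_\kappa$ listing each pair cofinally often, with $Z_\beta$ used to diagonalize against potential maximal antichains and $\vec y_\beta$ driving the base choices for the ascent path. Also distinguish, for each $\alpha\in\acc(\kappa)$, some $C^*_\alpha\in\mathcal C_\alpha$ (automatic when $\alpha\in B$), and when $\alpha\in S\setminus B$ fix an enumeration $\langle C^\alpha_j\mid j<|\mathcal C_\alpha|\rangle$ of $\mathcal C_\alpha$.

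At successor levels take the standard prolific extension and set $f_{\alpha+1}(i):=f_\alpha(i){}^\smallfrown\langle 0\rangle$. At a limit level $\alpha$, for each $C\in\mathcal C_\alpha$ define a sealing map $b^C:T\restriction C\to{}^\alpha\kappa$ recursively: given $x\in T\restriction C$, climb upward through $C$ starting from $x$, extending at each $\gamma'\in\nacc(C)$ by a bookkeeping-guided child (aimed at diagonalizing against $Z_\beta$) and gluing at each $\bar\alpha\in\acc(C)$ via $b^{C\cap\bar\alpha}$; the gluing is well-defined because $\sqleftup{S}$-coherence together with avoidance ($\bar\alpha\notin S$) forces $C\cap\bar\alpha\in\mathcal C_{\bar\alpha}$. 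Set $T_\alpha:=\{b^C(x)\mid C\in\mathcal C_\alpha,\,x\in T\restriction C\}$, and define $f_\alpha(i):=b^{C^*_\alpha}(y_i)$ for fixed base nodes $\langle y_i\mid i<\theta\rangle$, with the refinement that at $\alpha\in S\setminus B$ the $\theta$ indices $i$ are distributed across the multiple clubs in $\mathcal C_\alpha$, exploiting $\cf(\alpha)=\cf(\theta)$.

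Souslin-ness follows by the usual sealing argument using the hitting clause of the proxy principle: for any purported maximal antichain $A=Z_\beta$, there are stationarily many $\alpha\in S$ with $|\mathcal C_\alpha|=1$ (hence $\alpha\in B$) whose unique club $C_\alpha$ hits $A$ in cofinitely many successor elements, and then $b^{C_\alpha}$ seals $A$ below $\alpha$. The ascent-path condition $|\{i<\theta\mid f_\alpha(i)\not\sq f_\beta(i)\}|<\theta$ for all $\alpha<\beta$ reduces, by induction down a cofinal ``$B$-trail'' $\acc(C^*_\beta)\cap B$ (which is cofinal in $\beta$ because by the hypothesis $\acc(\bigcup\mathcal C_{\alpha'})\s B$ for $\alpha'\in B$, together with $E^\kappa_{>\cf(\theta)}\s B$, most accumulation points of $C^*_\beta$ lie in $B$), to the case of small descents in which full agreement $f_{\alpha'}(i)=f_\beta(i)\restriction\alpha'$ holds via the uniqueness of $C_{\alpha'}=C^*_\beta\cap\alpha'$ for $\alpha'\in B$. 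The main obstacle is the residual case $\alpha\in S$: by avoidance, $\alpha$ cannot belong to $\acc(C^*_\beta)$, so $f_\alpha(i)$ cannot simply inherit from a longer climb; the resolution uses the multiplicity of $\mathcal C_\alpha$ together with $\cf(\alpha)=\cf(\theta)$, arranging the distribution of the $\theta$ indices across the clubs in $\mathcal C_\alpha$ so that for cofinitely many $i$ the node $f_\alpha(i)$ agrees with $f_\beta(i)\restriction\alpha$. Reconciling this distribution with the Souslin-sealing bookkeeping at $S\cap B$-levels is the most delicate step of the construction.
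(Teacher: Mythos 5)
Your overall scaffolding---the microscopic climb $b^C$ along ladders, the Souslin-sealing at hitting ordinals (which, since $\nu=2$, occur in $S\cap B$), the use of $\kappa^{<\kappa}=\kappa$ for bookkeeping in place of $\diamondsuit$, and the observation that $\langle C_\alpha\mid\alpha\in B\rangle$ is a fully $\sq$-coherent singleton $C$-sequence closed under taking accumulation points---is sound, and defining $f_\alpha(i):=b^{C_\alpha}(y_i)$ at $\alpha\in B$ does yield full agreement $f_{\bar\alpha}(i)=f_\beta(i)\restriction\bar\alpha$ along the $B$-trail $\acc(C_\beta)$, exactly as you say.

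The gap is in the treatment of limit levels $\alpha\notin B$, and it is not confined to $\alpha\in S$. First, your default recipe $f_\alpha(i):=b^{C^*_\alpha}(y_i)$ for an arbitrarily chosen $C^*_\alpha\in\mathcal C_\alpha$ breaks down whenever $|\mathcal C_\alpha|>1$: if a later level $\beta>\alpha$ uses a club $D\in\mathcal C_\beta$ with $\alpha\in\acc(D)$, then $f_\beta(i)\restriction\alpha=b^{D\cap\alpha}(\cdot)$ where $D\cap\alpha\in\mathcal C_\alpha$, and if $D\cap\alpha\neq C^*_\alpha$ this restriction disagrees with $f_\alpha(i)$ for \emph{every} $i$, not just a bounded set---cobounded agreement fails outright. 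This problem already affects the levels $\alpha\in\acc(\kappa)\setminus(B\cup S)$, e.g.\ ordinals of cofinality strictly below $\cf(\theta)$ with more than one ladder, which your case split (default vs.\ ``distribute at $S\setminus B$'') simply does not cover. Second, the proposed ``distribution of the $\theta$ indices across the clubs of $\mathcal C_\alpha$'' is not a recipe: you must say, for each $i$ and each non-$B$ level $\alpha$, which $C\in\mathcal C_\alpha$ is used, in a way that is compatible with all larger levels simultaneously---and note that $|\mathcal C_\alpha|$ may exceed $\theta$, so a naive assignment is not even available. This compatibility-across-levels problem is precisely what the $\mu^{\ind}$-parameter of Definition~\ref{indexedP} is engineered to supply, and the present hypothesis does not hand you such an indexing for free. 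Third, and consequently, your coherence induction only has a usable step when $\beta\in B$; for $\beta\notin B$ there is no trail, and the step you call ``the most delicate'' is exactly where an argument is missing rather than merely elided.

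What is needed, and what your write-up should supply, is a canonical assignment $i\mapsto C_{\alpha,i}\in\mathcal C_\alpha$ for $\alpha\notin B$ together with a choice of base nodes that guarantees, for a tail of $i$, that $f_\alpha(i)$ coincides with the restriction of whichever $b^D$-climb a later level $\beta$ will use; this must be extracted from the specific structure of the hypothesis (avoidance of $S$, the fact that the climbs from $B$-levels never pass through non-$B$ levels, and $\cf(\alpha)\leq\cf(\theta)$ for $\alpha\notin B$). As written, the proposal identifies the obstacle correctly but does not resolve it.
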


\begin{fact}[{\cite[Theorem~4.4]{paper62}}] Suppose that:
\begin{itemize}
\item $\kappa$ is a strongly inaccessible cardinal;
\item $S\s E^\kappa_{>\omega}$ is stationary, and $\diamondsuit^*_S(\kappa\textup{-trees})$ holds;
\item $\p^-(\kappa,2,{\sq},1,\{S\})$ holds.
\end{itemize}

Then there is a family $\mathcal T$ of $2^\kappa$ many binary, full $\kappa$-trees
such that $\bigotimes\mathcal T'$ is $\kappa$-Souslin for every nonempty $\mathcal T'\in[\mathcal T]^{<\kappa}$.
\end{fact}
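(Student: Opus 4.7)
The strategy is to build the family $\{T_X\mid X\in{}^\kappa 2\}$ of $2^\kappa$ binary, splitting, normal, full $\kappa$-trees $T_X\s{}^{<\kappa}2$ in parallel, by recursion on levels, using the proxy sequence to manage limit stages and $\diamondsuit^*_S(\kappa\text{-trees})$ to seal potential maximal antichains in all $<\kappa$-sized products simultaneously. Fix $\vec C=\langle C_\alpha\mid\alpha<\kappa\rangle$ witnessing $\p^-(\kappa,2,{\sq},1,\{S\})$ and $\langle\mathcal A_\alpha\mid\alpha\in S\rangle$ witnessing $\diamondsuit^*_S(\kappa\text{-trees})$. Since $\kappa$ is strongly inaccessible, $|H_\kappa|=\kappa$, so by a standard coding we may interpret each $\mathcal A_\alpha$ as a family of triples $(I,\mathbf P,A)$ where $I\in[{}^\kappa 2]^{<\kappa}$ is a candidate index set, $\mathbf P$ is an initial segment of the product $\bigotimes_{X\in I}T_X$, and $A$ is a maximal antichain of $\mathbf P$.

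The recursion proceeds as follows. At level $0$: $(T_X)_0=\{\emptyset\}$. At a successor $\alpha+1$: $(T_X)_{\alpha+1}=\{y^\smallfrown i\mid y\in(T_X)_\alpha,\,i<2\}$, forced by binary and splitting. At a limit $\alpha\notin S$: set $(T_X)_\alpha:=\mathcal B(T_X\upto\alpha)$, which gives fullness automatically. At a limit $\alpha\in S$: inspect $\mathcal A_\alpha$. If it presents a guess $(I,\mathbf P,A)$ that correctly matches the product of initial segments of $\{T_X\mid X\in I\}$ built so far, perform the sealing by walking along $C_\alpha$: thread a cofinal path through elements of $A$, and for each $X\in I$ designate at most one cofinal branch $f_X$ of $T_X\upto\alpha$ to \emph{omit} from $(T_X)_\alpha$, chosen so that every product-branch in $\bigotimes_{X\in I}(T_X)_\alpha$ lies above some element of $A$. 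For trees not involved in the selected guess, set $(T_X)_\alpha:=\mathcal B(T_X\upto\alpha)$. Pairwise distinctness of the $T_X$'s is secured by encoding the coordinates of $X$ into which cofinal branch (if any) is omitted at a reserved cofinal set of limit stages, yielding $2^\kappa$ genuinely different trees.

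For the verification that $\bigotimes\mathcal T'$ is Souslin for each $\mathcal T'=\{T_{X_i}\mid i<\lambda\}\in[\mathcal T]^{<\kappa}$, suppose $\mathbf A\s\mathbf P:=\bigotimes\mathcal T'$ is a maximal antichain of size $\kappa$. Then $D:=\{\alpha<\kappa\mid\mathbf A\cap\mathbf P\upto\alpha\text{ is maximal in }\mathbf P\upto\alpha\}$ is a club. By $\diamondsuit^*_S$, the set of $\alpha\in S$ on which $\mathcal A_\alpha$ carries the correct triple $(\{X_i\}_{i<\lambda},\mathbf P\upto\alpha,\mathbf A\cap\mathbf P\upto\alpha)$ contains a club. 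Intersecting these clubs and applying the proxy hitting property (with $\theta=1$, $\mathcal S=\{S\}$) to a cofinal subset $B\s\kappa$ derived from the levels of $\mathbf A$, we obtain $\alpha\in S$ where the sealing step at that limit actually handles $\mathbf A$, forcing every product-node at level $\alpha$ to extend some element of $\mathbf A$. Hence $\mathbf A$ cannot be properly extended in $\mathbf P$, contradicting $|\mathbf A|=\kappa$.

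The principal obstacle is the coordination of sealing at each $\alpha\in S$: fullness permits at most one omitted cofinal branch per tree per limit, yet potentially many guesses in $\mathcal A_\alpha$ may clamor for attention. The fix is to dedicate each $\alpha\in S$ to at most one guess by partitioning $S$ into $\kappa$ many stationary pieces indexed by the possible codes of guesses; since $\diamondsuit^*_S$ captures each antichain on a club, each gets caught on its assigned stationary piece, and the proxy climb along $C_\alpha$ provides the ``room'' needed to thread through $A$ while respecting the one-branch-omission budget. A secondary challenge is verifying that each individual $T_X$ is a $\kappa$-tree (levels of size $<\kappa$), which follows from $\kappa$'s strong inaccessibility together with the bound $|\mathcal B(T_X\upto\alpha)|\le 2^{|\alpha|}<\kappa$.
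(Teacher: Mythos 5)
The central step of your sealing mechanism is asserted but not justified, and it is in fact false in general. You claim that at a limit $\alpha\in S$ you can, "for each $X\in I$ designate at most one cofinal branch $f_X$ of $T_X\restriction\alpha$ to omit from $(T_X)_\alpha$, chosen so that every product-branch in $\bigotimes_{X\in I}(T_X)_\alpha$ lies above some element of $A$." But for a full tree you are forced to put all of $\mathcal B(T_X\restriction\alpha)$ into $(T_X)_\alpha$ except a single branch, so the level $\alpha$ of the product consists of nearly all of $\prod_{X\in I}\mathcal B(T_X\restriction\alpha)$ — only the "grid" $\{\vec x\mid\exists X\in I,\ \vec x(X)=f_X\}$ is removed. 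Even when $A\cap(\mathbf P\restriction\alpha)$ is a maximal antichain in $\mathbf P\restriction\alpha$, the set of branches through $\mathbf P\restriction\alpha$ that extend no element of $A$ is merely nowhere dense; it need not be a subset of such a grid, and in particular it can contain many sequences $\vec x$ with every coordinate distinct from the corresponding $f_X$. Thus the sealing you describe simply cannot be carried out pointwise at level $\alpha$, and the Souslinness verification collapses.

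The difficulty this points to is exactly the crux of the theorem: the constraints of binarity, splitting and fullness leave essentially no freedom at successor levels (they are forced) and only one bit of freedom at each limit level (which branch to omit), so antichains cannot be killed "at the top" in the classical microscopic fashion. Any correct argument must leverage the coherence of $\vec C$ to propagate information along $C_\alpha$ so that the single omitted branch at each $\bar\alpha\in\acc(C_\alpha)\cap S$ already removes, cumulatively, all branches that fail to meet the guessed antichain — i.e., the "bad" branches never accumulate at level $\alpha$ because their restrictions were already omitted further down. Your outline gestures at "walking along $C_\alpha$," but it never explains how a single omission per level can control the (potentially $2^{|\bar\alpha|}$-sized) set of problematic branches, nor how to simultaneously serve all $2^\kappa$-many products $\bigotimes\mathcal T'$ with only $\kappa$-many limit levels; the suggestion to "partition $S$ into $\kappa$ many stationary pieces indexed by codes of guesses" is not viable, as there are $2^\kappa$ many relevant index sets $\mathcal T'\in[\mathcal T]^{<\kappa}$ and hence more candidate codes than there are stationary pieces. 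Without these two ingredients the proposal does not constitute a proof.
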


\begin{fact}[{\cite[Theorem~5.1]{paper62}}] Suppose that:
\begin{itemize}
\item $\kappa=\lambda^+=2^\lambda$ for $\lambda$ a regular uncountable cardinal;
\item $\square^B_\lambda$ and $\diamondsuit(\lambda)$ both hold;\footnote{$\square^B_\lambda$ asserts the existence of a $\lambda$-bounded $\sq_\lambda$-coherent $C$-sequence over $\lambda^+$.}
\item $\p^-(\kappa,2,{\sq_\lambda},\kappa,\{E^\kappa_\lambda\})$ holds.
\end{itemize}

Then there is a family $\mathcal T$ of $2^\kappa$ many binary, full $\kappa$-trees
such that $\bigotimes\mathcal T'$ is $\kappa$-Souslin for every nonempty $\mathcal T'\in[\mathcal T]^{<\lambda}$.
\end{fact}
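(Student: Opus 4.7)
The plan is to carry out a single microscopic construction (in the spirit of \cite{paper23}) producing a ``mother tree'' $\mathbf T\subseteq{}^{<\kappa}2$ together with a coding by an almost-disjoint family $\mathcal A\subseteq[\kappa]^\kappa$ of size $2^\kappa$, so that each $A\in\mathcal A$ extracts a full, normal, binary, splitting streamlined $\kappa$-tree $\mathbf T_A$ from $\mathbf T$, and the resulting family $\mathcal T=\{\mathbf T_A\mid A\in\mathcal A\}$ has Souslin products of size $<\lambda$. Since $2^\lambda=\lambda^+=\kappa$, Shelah's theorem supplies $\diamondsuit(\kappa)$, hence $\diamondsuit(H_\kappa)$ (Fact~\ref{diamondhkappa}) is available and the proxy principle may be upgraded to $\p(\kappa,2,{\sq_\lambda},\kappa,\{E^\kappa_\lambda\})$ throughout.

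Fix a proxy-witnessing $C$-sequence $\vec C=\langle C_\alpha\mid\alpha<\kappa\rangle$ and recurse on $\alpha<\kappa$. Successor stages are handled by standard binary splitting. At a limit $\alpha$ with $\cf(\alpha)<\lambda$ the $\sq_\lambda$-coherence is trivial, so I would use a $\square^B_\lambda$-sequence to coherently select cofinal branches of $\mathbf T\restriction\alpha$ to promote into $T_\alpha$ while maintaining fullness. At $\alpha\in E^\kappa_\lambda$, the ladder $C_\alpha$ drives the climb, and $\diamondsuit(H_\kappa)$ is exploited to predict, simultaneously, maximal antichains of every prospective product $\bigotimes_{i<\tau}\mathbf T_{A_i}$ for each $\tau<\lambda$ and each $\tau$-tuple of distinct $A_i\in\mathcal A$; the diagonal hitting feature ($\theta=\kappa$) then seals all such predictions along $C_\alpha$. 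The family $\mathcal T$ is defined by declaring that at cofinally many designated split levels, $\mathbf T_A$ follows the branching prescribed by $A$, so that distinct $A,B\in\mathcal A$ yield trees diverging at the first split level where $A$ and $B$ disagree. The $\diamondsuit(\lambda)$ hypothesis is used internally to organize, at each $\alpha\in E^\kappa_\lambda$, coherent guesses of antichain approximations of length $\lambda$.

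The hard part will be verifying that $\bigotimes_{i<\tau}\mathbf T_{A_i}$ is $\kappa$-Souslin for each $\tau<\lambda$ and each nonrepeating $\tau$-tuple from $\mathcal A$. A maximal antichain in such a product corresponds to a set of $\tau$-sequences of nodes at matching levels, and sealing it at a single $\alpha\in E^\kappa_\lambda$ requires threading all $\tau$ coordinates simultaneously through the climb along $C_\alpha$. This is precisely where $\square^B_\lambda$ carries the weight: it supplies, at each $\alpha\in E^\kappa_\lambda$, an auxiliary coherent system of approximations whose branches can accommodate the $\tau$-many coordinates of each potential antichain uniformly, reducing the task to a single diagonal-hitting step that absorbs all ${<}\lambda$ coordinates at once. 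Combined with $\diamondsuit(\lambda)$-guided approximations at length $\lambda$ and the proxy-level prediction via $\diamondsuit(H_\kappa)$, this should yield that no antichain in any such product escapes sealing, and hence every $\bigotimes\mathcal T'$ with $\mathcal T'\in[\mathcal T]^{<\lambda}\setminus\{\emptyset\}$ is $\kappa$-Souslin.
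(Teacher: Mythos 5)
Your proposal is modeled on the slim/microscopic Souslin-tree constructions (as in Section~\ref{section:Zakrzewski}), but this misfires against the \emph{fullness} requirement, which changes the nature of the problem in a way your outline never confronts. For a normal, binary, splitting, full streamlined $\kappa$-tree, the successor levels are forced by the level below, and at each limit level $\alpha$ one must promote \emph{every} cofinal branch of $T\restriction\alpha$ with at most one exception. There is therefore no ``coherently select cofinal branches to promote while maintaining fullness'' and no ``designated split levels'' at which $\mathbf T_A$ ``follows the branching prescribed by $A$'': the only freedom available at a limit level is which single branch, if any, to drop. In particular the sealing device the microscopic approach lives on --- picking, for each node $x$ of $T\restriction\alpha$, one carefully chosen branch through $x$ that passes through the guessed antichain and declaring $T_\alpha$ to be that slim selection --- is simply unavailable here, because you do not get to discard the branches that miss the antichain; you are obliged to keep nearly all of them.

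That constraint hides the genuinely hard problem, and your proposal never addresses it: for $T$ to be a $\kappa$-tree while full, the number of branches of $T\restriction\alpha$ must already be $\le\lambda$ at every limit $\alpha$, and for $T$ (and the products $\bigotimes\mathcal T'$) to be Souslin, every branch through $T\restriction\alpha$ at the proxy-hit levels must extend a node of the predicted antichain without any further pruning. Both of these must be secured by engineering the shape of $T$ \emph{below} $\alpha$, and this is exactly what $\square^B_\lambda$ and $\diamondsuit(\lambda)$ are purpose-built for; your proposal treats them as ancillary organizing tools (``coherent guesses of antichain approximations of length $\lambda$'') rather than as the central mechanism. Until you explain how the pre-limit structure is constrained so that (i) the branch count at each limit level stays at most $\lambda$, and (ii) at the $\alpha\in E^\kappa_\lambda$ hit by the proxy sequence \emph{every} surviving branch of $T\restriction\alpha$ already meets the guessed maximal antichain, the plan does not get off the ground, regardless of how the family $\mathcal A$ is indexed.
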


\begin{fact}[{cf.~\cite[Corollary~1.15]{rinot20}}]
Suppose that:
\begin{itemize}
\item $\kappa=\lambda^+$ for some regular uncountable cardinal $\lambda$;
\item $\kappa$ is $({<}\lambda)$-closed;
\item $\nu\in\reg(\kappa)$;
\item $\theta$ is a cardinal such that $\nu<\theta^+<\lambda$;
\item $\p(\kappa,2,{\sq},\kappa,\{E^\kappa_\lambda\},2,\omega)$ holds.
\end{itemize}

Then there is a $\kappa$-Souslin tree $T$ such that for every infinite cardinal $\Lambda$:
\begin{itemize}
\item If $\Lambda<\nu$ or $\theta<\Lambda<\lambda$,
then for every selective ultrafilter $\mathcal{U}$ over $\Lambda$,
the reduced power $T^\Lambda/\mathcal{U}$ is an almost-Souslin $\kappa$-Aronszajn tree;
\item If $\nu\le\cf(\Lambda)\le\Lambda\le\theta$,
then for every uniform ultrafilter $\mathcal{U}$ over $\Lambda$,
the reduced power $T^\Lambda/\mathcal{U}$ admits a $\kappa$-branch;
\item If $\Lambda=\lambda$, then for every $(\omega,\Lambda)$-regular ultrafilter $\mathcal{U}$ over $\Lambda$,
the reduced power $T^\Lambda/\mathcal{U}$ is not a $\kappa$-tree.
\end{itemize}
\end{fact}
\section{A large family of Souslin trees}\label{section:Zakrzewski}
In \cite{MR638747}, Zakrzewski constructed from $\diamondsuit(\aleph_1)$ a family of $2^{\aleph_1}$ many $\aleph_1$-Souslin trees
such that the product of any finitely (nonzero) many of them is again Souslin.
The main result of this section (Theorem~\ref{thm44} below) generalizes Zakrzewski's theorem in various ways.
First, let us recall the definition of a product of trees.
\begin{defn}\label{def-classical-product-tree}
For a sequence of streamlined trees $\langle T^j \mid j<\tau \rangle$,
the product tree $\bigotimes_{j<\tau} T^j$
is defined to be the poset $\mathbf T=({T}, {<_{{T}}})$,
where:
\begin{itemize}
\item $T:=\{ \vec x\in \prod_{j<\tau}T^j\mid j\mapsto\dom(\vec x(j))\text{ is constant}\}$, and
\item $\vec{x} <_{{T}} \vec{y}$ iff $\vec x(j)\stree \vec y(j)$ for every $j<\tau$.
\end{itemize}
\end{defn}
\begin{remark} The tree $\mathbf T$ is easily seen to be isomorphic to a streamlined tree via Notation~\ref{notationcomp},
but in order to ease on the reader, we stick here to the classical representation of products.
\end{remark}

Second, to motivate Theorem~\ref{thm44},
we state a sample corollary that does not mention products.
In what follows, two streamlined $\kappa$-trees $S$ and $T$ are \emph{far} iff there exist no streamlined $\kappa$-subtrees of $S$ and $T$ that are club-isomorphic.
\begin{cor} If $\diamondsuit^+(\aleph_1)$ holds, then there exists a streamlined $\aleph_1$-Aronszajn tree $T$
admitting $\aleph_2$-many pairwise far streamlined $\aleph_1$-Souslin subtrees.
\end{cor}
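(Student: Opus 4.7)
The strategy is to verify the two combinatorial hypotheses of Theorem~\ref{thma} (and of the stronger Theorem~\ref{thm44} from which the corollary is actually derived) at $\kappa=\aleph_1$ under $\diamondsuit^+(\aleph_1)$, and then read off the conclusion.

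Verifying the proxy hypothesis: since $\diamondsuit^+(\aleph_1)$ entails $\diamondsuit(\aleph_1)$, the entry of Table~\ref{table:derived-combinatorial} labeled ``$\diamondsuit(S)$ for $\omega_1=\sup(S)$'', instantiated with $S:=\aleph_1$, produces $\p_\omega(\aleph_1,2,{\sq},\aleph_1,\{\aleph_1\},2,{<}\omega)$; weakening $\xi$ from $\omega$ up to $\aleph_1$ via the monotonicity observations of Subsection~\ref{monotonicity} then delivers $\p(\aleph_1,2,{\sq},\aleph_1)$ as required. Verifying the Kurepa-tree hypothesis: it is a classical theorem of Jensen that $\diamondsuit^+(\aleph_1)$ outright supplies an $\aleph_1$-Kurepa tree, whose cofinal branch set has cardinality at least $\aleph_2$.

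With both hypotheses in hand, Theorem~\ref{thm44} (of which Theorem~\ref{thma} is a sample corollary) will produce a streamlined $\aleph_1$-Aronszajn tree $T$ together with an $\aleph_2$-indexed family of $\aleph_1$-Souslin subtrees. Pairwise almost-disjointness is to be read off from the internal design of the construction, typically through indexing the Souslin subtrees by distinct cofinal branches of the Kurepa tree $K$ provided above: any two distinct cofinal branches of $K$ diverge by some countable ordinal, so the corresponding branch-indexed Souslin subtrees share only an initial segment up to that countable point, which has size strictly less than $\aleph_1$.

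The main obstacle, and the chief substance of Section~\ref{section:Zakrzewski}, lies on the construction side rather than on the derivation of this corollary: one must engineer the proxy-based construction of the family of branch-indexed Souslin subtrees and verify that even their finite products remain Souslin (the stronger conclusion of Theorem~\ref{thma}, of which pairwise almost-disjointness is the easier-to-articulate shadow). By comparison, once Theorem~\ref{thm44} is available the present derivation is a routine matching of hypotheses, the only delicate point being that the abstract ``products are Souslin'' conclusion of Theorem~\ref{thma} alone does not formally force almost-disjointness, so one really does need the more informative statement of Theorem~\ref{thm44}.
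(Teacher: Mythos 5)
Your verification of the two hypotheses is correct and matches the paper's, and your observation that a Kurepa tree supplies $\aleph_2$-many branches is exactly how the paper arranges things. However, the treatment of almost-disjointness contains a genuine gap, and in fact goes in the wrong direction on the one delicate point.

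You claim that almost-disjointness is ``to be read off from the internal design of the construction,'' reasoning that two branches $\eta,\rho$ of $K$ diverge at some countable ordinal, so the trees $T^\eta$ and $T^\rho$ ``share only an initial segment up to that countable point.'' That is not what the construction of Theorem~\ref{thm44} produces. At a successor level, $L^{\nu}$ depends only on $L^{\nu\restriction\alpha}$ via a formula that is independent of $\nu$; consequently, if $\eta$ and $\rho$ diverge at $\delta$ then $L^{\eta\restriction\delta}=L^{\rho\restriction\delta}$ forces $L^{\eta\restriction(\delta+1)}=L^{\rho\restriction(\delta+1)}$, and by induction the two trees continue to agree at every successor level until the next limit stage. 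Even at limit stages, where the construction genuinely consults the branch $\eta$ (through the decoded tuple and the action on antichains), nothing in the recursion guarantees that the resulting levels become disjoint. So the ``shared only an initial segment'' picture is false, and almost-disjointness cannot be read off the construction in the direct way you propose.

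You also assert that the abstract ``products are Souslin'' conclusion ``does not formally force almost-disjointness, so one really does need the more informative statement of Theorem~\ref{thm44}.'' This is precisely backwards: the paper derives almost-disjointness \emph{only} from the abstract product statement, via the classical theorem of Kurepa that the square of a $\kappa$-tree is never $\kappa$-Souslin. Concretely: if $T^\eta\otimes T^\rho$ is $\kappa$-Souslin and $R:=T^\eta\cap T^\rho$ had size $\kappa$, then $R$ would be a $\kappa$-tree (it is downward-closed, and the levels of $T^\eta$ have size $<\kappa$), and $R\otimes R$ would embed as a subposet of $T^\eta\otimes T^\rho$; by Kurepa's theorem $R\otimes R$ carries an antichain of size $\kappa$, contradicting Souslinity of the product. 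Thus $|T^\eta\cap T^\rho|<\kappa$. This is the step your proposal is missing, and it is where the real content of the corollary's derivation lies.
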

\begin{proof} Recall that $\diamondsuit^+(\aleph_1)$ entails
the existence of an $\aleph_1$-Kurepa tree.
In addition, by \cite[Corollary~1.10]{paper22}, $\diamondsuit(\aleph_1)$ implies $\p(\aleph_1,2,{\sq},\aleph_1)$.
Thus, by Corollary~\ref{cor46} below (using $\kappa=\aleph_1$ and $\mho=\aleph_2)$,
there exists a streamlined $\aleph_1$\nobreakdash-Aronszajn tree admitting $\aleph_2$-many streamlined $\aleph_1$-subtrees such that the product of any two of them is $\aleph_1$-Souslin.
Finally, note that, in general, if $S$ and $T$ are streamlined $\kappa$-trees whose product is $\kappa$-Souslin,
then they are far. Indeed, in this case, for all streamlined $\kappa$-subtrees $S'$ of $S$ and $T'$ of $T$ and for every club $D\s\kappa$,
the product of $S'\restriction D$ and $T'\restriction D$ is $\kappa$-Souslin,
and then \cite[Proposition~2.6]{paper62} implies that there is no weak embedding from $S'\restriction D$ to $T'\restriction D$,
let alone an isomorphism.
\end{proof}

Looking at the preceding corollary, one may wonder whether the conclusion can be strengthened to make the ultimate tree Souslin, as well. The next proposition shows that this is impossible.
\begin{prop} Suppose $\langle T^\eta\mid \eta<\kappa\rangle$ is a sequence of streamlined $\kappa$-subtrees of a given streamlined $\kappa$-Souslin tree $T$.
Then there are $\eta<\rho<\kappa$ such that $|T^\eta\cap T^\rho|=\kappa$.
\end{prop}
\begin{proof} We shall make use of the following standard feature of Souslin trees.
\begin{claim} Suppose that $S$ is a streamlined $\kappa$-subtree of $T$. Then there exists some $s\in S$ such that $\{ t\in T\mid s\s t\}$ is a subset of $S$.
\end{claim}
\begin{proof} Suppose not.
In particular, for every $\alpha<\kappa$, we may find a pair $(s_\alpha,t_\alpha)$ such that:
\begin{itemize}
\item $s_\alpha\in S_\alpha$;
\item $t_\alpha\in T\setminus S$ with $s_\alpha\s t_\alpha$.
\end{itemize}

Fix a sparse enough set $A\in[\kappa]^\kappa$ such that for every pair $\alpha<\beta$ of ordinals from $A$, $\dom(t_\alpha)<\beta$.
Since $T$ is a $\kappa$-Souslin tree, we may pick a pair $\alpha<\beta$ of ordinals from $A$ such that $t_\alpha\s t_\beta$. As also $s_\beta\s t_\beta$ and $\dom(t_\alpha)<\beta=\dom(s_\beta)$,
it follows that $t_\alpha\s s_\beta$. But $s_\beta$ belongs to the streamlined tree $S$, which must mean that $t_\alpha\in S$, contradicting the choice of $t_\alpha$.
\end{proof}

For each $\eta<\kappa$, pick some $s_\eta\in T$ such that $\{ t\in T\mid s_\eta\s t\}$ is a subset of $T^\eta$.
As $T$ is a $\kappa$-Souslin tree, the set $N:=\{ s\in T\mid |\{ t\in T\mid s\s t\}|<\kappa\}$ has size less than $\kappa$.
Using again that $T$ is a $\kappa$-Souslin tree, we may then find $\eta<\rho<\kappa$ such that $s_\eta\s s_\rho$ and $s_\rho\notin N$.
So $\{ t\in T\mid s_\rho\s t\}$ is a subset of $T^\eta\cap T^\rho$ of size $\kappa$.
\end{proof}

We now arrive at the main technical result of this section.
To recover Zakrzewski's theorem, consider the case $\kappa:=\aleph_1$, $K:={}^{<\kappa}2$ and $\mathcal S:=\{\kappa\}$.

\begin{thm}\label{thm44} Suppose:
\begin{itemize}
\item $K\s{}^{<\kappa}H_\kappa$ is a normal streamlined tree of height $\kappa$;
\item $\mathcal S$ is a nonempty collection of stationary subsets of $\kappa$;
\item $\p(\kappa,\kappa,{\sq},\kappa,\mathcal S,2)$ holds.
\end{itemize}

Then there exists a system $\langle T^\eta \mid\eta\in\mathcal B(K)\rangle$ of prolific normal streamlined
$\kappa$-Souslin trees satisfying all of the following:
\begin{enumerate}
\item For every nonzero cardinal $\tau$
such that $\kappa$ is $\tau$-closed and such that there exists $\S\in\mathcal S$ for which $\S\setminus E^\kappa_{>\tau}$ is nonstationary,
for every injective sequence $\langle \eta_j\mid j<\tau\rangle$ of elements of $\mathcal B(K)$,
the product tree $\bigotimes_{j<\tau}T^{\eta_j}$ is again $\kappa$-Souslin;
\item The union of these trees $T:=\bigcup\{ T^\eta\mid \eta\in\mathcal B(K)\}$ has no $\kappa$-branches;
\item If $K$ is a $\kappa$-tree, then $T$ is a $\kappa$-Aronszajn tree.
\end{enumerate}
\end{thm}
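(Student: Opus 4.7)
The plan is to invoke the microscopic approach on a single shared $\mathcal C$-sequence, weaving all the trees $\{T^\eta\mid\eta\in\mathcal B(K)\}$ into one master tree $T\s{}^{<\kappa}H_\kappa$. Fix $\vec{\mathcal C}=\langle\mathcal C_\alpha\mid\alpha<\kappa\rangle$ witnessing $\p^-(\kappa,\kappa,{\sq},\kappa,\mathcal S,2,{<}\omega)$, together with a $\diamondsuit(H_\kappa)$-sequence $\langle\Omega_\beta\mid\beta<\kappa\rangle$ and partition $\langle R_i\mid i<\kappa\rangle$ as in Fact~\ref{diamondhkappa}. By recursion on $\alpha<\kappa$, I define for each $x\in K_\alpha$ a set $T^x\s{}^\alpha H_\kappa$ with $|T^x|<\kappa$, arranged so that if $y=x\upto\beta$ with $\beta<\alpha$, then every $t\in T^x$ satisfies $t\upto\beta\in T^y$. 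Labels at each step will be chosen so as to record the last value of $x$, making $T^x\cap T^{x'}=\emptyset$ for distinct $x,x'\in K_\alpha$. For $\eta\in\mathcal B(K)$, set $T^\eta:=\bigcup\{T^{\eta\upto\alpha}\mid \alpha<\kappa\}$ (together with its lower levels), and $T:=\bigcup\{T^\eta\mid \eta\in\mathcal B(K)\}$.

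At a successor stage, for each $x\in K_{\alpha+1}$ and each $t\in T^{x\upto\alpha}$, attach $\max\{\omega,\alpha\}$-many prolific extensions tagged by $x(\alpha)$. At a limit stage $\alpha$, use $\Omega_\alpha$ to attempt to decode a triple $(\tau,\vec\eta,A)$ where $\tau$ satisfies the hypotheses of clause~(1), $\vec\eta=\langle\eta_j\mid j<\tau\rangle$ is an injective sequence in $\mathcal B(K)$, and $A$ is a maximal antichain of the already-built segment $\bigotimes_{j<\tau}(T^{\eta_j}\upto\alpha)$. For each $C\in\mathcal C_\alpha$ and each $x\in K_\alpha$, pick a thread $b^{C,x}\in\prod_{\gamma\in C}T^{x\upto\gamma}$, coherent in the sense that whenever $D\sq C$ lies in $\mathcal C_{\bar\alpha}$ for some $\bar\alpha\in\acc(C)$, $b^{D,x\upto\bar\alpha}=b^{C,x}\upto\sup(D)$. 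Set $T^x:=\{\bigcup\im(b^{C,x})\mid C\in\mathcal C_\alpha\}$. Normality of $K$ together with a standard microscopic lifting argument ensures that every node below $\alpha$ extends to some $T^x$ at level $\alpha$.

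The crux is choosing the $b^{C,x}$'s so as to seal $A$ \emph{simultaneously across all $\tau$ components}. Pick $\S\in\mathcal S$ with $\S\setminus E^\kappa_{>\tau}$ nonstationary. By the hitting clause, applied to the cofinal sets of levels at which the various elements of $A$ become visible above various sub-nodes, there are stationarily many $\alpha\in\S\cap E^\kappa_{>\tau}$ at which $\Omega_\alpha$ correctly predicts $(\tau,\vec\eta,A)$ and each $C\in\mathcal C_\alpha$ hits all the required cofinal sets along $\nacc(C)$. At such $\alpha$, because $\nu=2$, $\mathcal C_\alpha=\{C\}$ is a singleton, so coordination runs along a single backbone. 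Because $\cf(\alpha)>\tau$ and $\kappa$ is $\tau$-closed, one can walk inductively up $\nacc(C)$ maintaining a tuple $\langle b^{C,\eta_j\upto\alpha}(\gamma)\mid j<\tau\rangle$; at cofinally many $\gamma\in\nacc(C)$, replace this tuple by one whose componentwise extension dominates a newly-discovered element of $A$. A standard density argument then shows that $A$ remains maximal in the full product $\bigotimes_{j<\tau}T^{\eta_j}$, establishing clause~(1); the special case $\tau=1$ (which is applicable to any nonempty $\mathcal S$) gives the Souslinity of each individual $T^\eta$.

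Clauses~(2) and~(3) come nearly for free. Since distinct $x,x'\in K_\alpha$ yield disjoint $T^x,T^{x'}$, every node of $T$ determines a unique $x\in K_\alpha$, and coherence forces these $x$'s along a $\kappa$-branch of $T$ to align into some $\eta\in\mathcal B(K)$; the branch thus lies in $T^\eta$, contradicting its Souslinity. If $K$ is a $\kappa$-tree, then $|T_\alpha|\leq\sum_{x\in K_\alpha}|T^x|<\kappa$, so $T$ is $\kappa$-Aronszajn by~(2). The principal obstacle will be the coordinated sealing at level $\alpha$ across $\tau$-many components: one must extract from the \emph{single} backbone $C$ enough parallel branch-choices to dominate $A$ in every component while preserving the coherence constraint $t\upto\beta\in T^y$ uniformly across all of $K_\alpha$. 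The combination $\mu=\kappa$ (room for many branches at each level), $\nu=2$ (a singleton $C$ at hitting levels), $\theta=\kappa$ (diagonal hitting of many cofinal sets), and the cardinal-arithmetic hypotheses in clause~(1) is exactly what will make this coordination feasible.
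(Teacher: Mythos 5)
Your overall strategy is the same as the paper's: run the microscopic approach on a single shared $\mathcal C$-sequence, building a companion tree for each node of $K$, and use $\diamondsuit(H_\kappa)$ together with the hitting clause to seal antichains of the products. However, there are two substantive gaps in the way you set up the limit step.

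First, your limit-level construction does not secure normality. You pick, for each $x\in K_\alpha$ and each $C\in\mathcal C_\alpha$, a \emph{single} thread $b^{C,x}$ and set $T^x:=\{\bigcup\im(b^{C,x})\mid C\in\mathcal C_\alpha\}$. That gives at most $|\mathcal C_\alpha|$ nodes per level-$\alpha$ tree, and there is nothing forcing these few nodes to extend an arbitrary node sitting at a level in $C$. The paper instead defines a whole matrix $\langle b^{\alpha,C\cap\beta,\eta}_t\mid\beta\in C,\,\eta\in K_\beta,\,t\in T^\eta\restriction C\cap(\beta+1)\rangle$: for \emph{every} lower node $t$ lying on the levels indexed by $C$ one constructs a branch $\mathbf b^{C,\eta}_t$ extending $t$, and $L^\eta$ is the collection of all of these. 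That is what makes normality go through. Writing ``normality of $K$ together with a standard microscopic lifting argument ensures\ldots'' does not close this; the argument really requires one branch per (club, lower node) pair, not one per club.

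Second, your sealing mechanism decodes the antichain from $\Omega_\alpha$ at the \emph{top} of the limit stage and then walks up $\nacc(C)$, sealing cofinally. This clashes with coherence: for $\bar\alpha\in\acc(C)$ the initial segment $b^{C,x}\restriction\bar\alpha$ must agree with $b^{C\cap\bar\alpha,x\restriction\bar\alpha}$, which was defined at stage $\bar\alpha$ \emph{without knowledge of $\Omega_\alpha$}. So whatever sealing instructions $\Omega_\alpha$ dictates cannot retroactively influence the branch below $\sup(\acc(C))$. Moreover, the proxy hitting does not produce stationarily many $\alpha$ at which $\Omega_\alpha$ \emph{itself} is a correct guess; it produces stationarily many $\alpha$ for which $\nacc(C_\alpha)\cap B_i$ is cofinal, where $B_i$ is the diamond-stationary set of good levels $\beta$. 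The paper therefore decodes locally: at each $\beta\in\nacc(C)$, it reads $\Omega_\beta$ (which, on the diamond-stationary set, correctly codes $A\cap(T\restriction\beta)$) and reads $\psi(\beta)$ to identify \emph{which} finite/small tuple of lower nodes to seal above, and then seals by stepping from $\beta^-$ to $\beta$. Because every decision is a function of data available at level $\beta$, the resulting matrix is canonical and the coherence claim (Claim~\ref{cl441}) goes through. Your approach, as written, is not canonical in this sense, and your ``newly-discovered element of $A$'' at level $\gamma\in\nacc(C)$ is not actually discoverable from $\Omega_\alpha$ alone at the local stage $\gamma$. Finally, a small correction: sealing does not show $A$ ``remains maximal''; it shows that at the hitting level $\alpha$ every product node at level $\alpha$ already extends an element of $A$, whence $A\subseteq(\text{product})\restriction\alpha$ and $|A|<\kappa$.

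One genuinely different (and workable) ingredient in your proposal is the idea of tagging successor-level extensions by $x(\alpha)$ so that $T^x\cap T^{x'}=\emptyset$ for distinct $x,x'\in K_\alpha$; this lets you derive Clause~(2) by observing that any $\kappa$-branch of $T$ would determine a $\kappa$-branch of $K$ and hence live inside a single $T^\eta$, contradicting its Aronszajn-ness. The paper does not make the $L^\eta$'s disjoint (at successor levels $L^\eta$ depends only on $\eta\restriction\alpha$) and instead proves Clause~(2) by a direct diamond-guessing argument (guessing the putative branch and forcing the construction to avoid $\Omega_\beta$ whenever $\psi(\beta)=\emptyset$). Your route buys a cleaner Clause~(2) at the cost of a slightly more cumbersome coding, and is legitimate --- but only once the limit-step normality and the local sealing are repaired as above.
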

\begin{proof}
Fix a sequence $\vec{\mathcal C}=\langle \mathcal C_\alpha\mid \alpha<\kappa\rangle$ witnessing $\p^-(\kappa,\kappa,{\sq},\kappa,\mathcal S,2)$.
Without loss of generality, $0\in\bigcap_{0<\alpha<\kappa}\bigcap\mathcal C_\alpha$.
As $\diamondsuit(\kappa)$ holds, fix sequences $\langle \Omega_\beta\mid\beta<\kappa\rangle$ and $\langle R_i\mid i<\kappa\rangle$
together witnessing $\diamondsuit(H_\kappa)$, as in Fact~\ref{diamondhkappa}.
Let $\pi:\kappa\rightarrow\kappa$ be such that $\beta\in R_{\pi(\beta)}$ for all $\beta<\kappa$.
Let $\lhd_\kappa$ be some well-ordering of $H_\kappa$ of order-type $\kappa$,
and let $\phi:\kappa\leftrightarrow H_\kappa$ witness the isomorphism $(\kappa,\in)\cong(H_\kappa,\lhd_\kappa)$.
Put $\psi:=\phi\circ\pi$.

We shall construct a system $\langle L^\eta\mid \eta\in K\rangle$ such that,
for all $\alpha<\kappa$ and $\eta\in K_\alpha$:
\begin{itemize}
\item[(i)] $L^\eta\in[{}^\alpha\kappa]^{<\kappa}$;
\item[(ii)] for every $\beta<\alpha$, $L^{\eta\restriction\beta}=\{t\restriction\beta\mid t\in L^\eta\}$.
\end{itemize}

By convention, for every $\alpha\in\acc(\kappa)$ such that $\langle L^\eta\mid \eta\in K\restriction\alpha\rangle$ has already been defined,
and for every $\eta\in K_\alpha$, we shall let $T^\eta:=\bigcup_{\beta<\alpha}L^{\eta\restriction\beta}$,
so that $T^\eta$ is a streamlined tree of height $\alpha$ whose $\beta^{\text{th}}$ level is $L^{\eta\restriction\beta}$ for all $\beta<\alpha$.
Throughout, we shall ensure that each such tree $T^\eta$ is normal and prolific.

\medskip

The construction of the system $\langle L^\eta\mid \eta\in K\rangle$ is by recursion on $\dom(\eta)$.
We start by letting $L^\emptyset:=\{\emptyset\}$.
Next, for every $\alpha<\kappa$ such that $\langle L^\eta\mid \eta\in K_\alpha\rangle$ has already been defined,
for every $\eta\in K_{\alpha+1}$, we let
$$L^\eta:=\{ t{}^\smallfrown\langle\iota\rangle\mid t\in L^{\eta\restriction\alpha}, \iota<\max\{\omega,\alpha\}\}.$$

Suppose now that $\alpha\in\acc(\kappa)$ is such that $\langle L^\eta\mid \eta\in K\restriction\alpha\rangle$ has already been defined.
For each $C\in\mathcal C_\alpha$, we shall define a matrix
$$\mathbb B^C=\langle b_x^{\alpha,C\cap\beta,\eta}\mid \beta\in C, \eta\in K_\beta, x\in T^\eta\restriction C\cap(\beta+1) \rangle$$
ensuring that $x\s b_x^{\alpha,\bar D,\bar\eta}\s b_x^{\alpha,D,\eta}\in L^\eta$ whenever $\bar\eta\s\eta$ and $\bar D\sq D$.\footnote{\label{fnc}This also implies that the matrix is continuous,
i.e., for $\beta\in\acc(C)$, $\eta\in K_\beta$ and $x\in T^\eta\restriction(C\cap\beta)$, it is the case that $b_x^{\alpha,C\cap\beta,\eta}=\bigcup\{b_x^{\alpha,C\cap\bar\beta,\eta\restriction\bar\beta}\mid \bar\beta\in C\cap\beta\setminus\dom(x)\}$.}
Then, for all $C\in\mathcal C_\alpha$, $\eta\in K_\alpha$ and $x\in T^\eta\restriction C$,
it will follow that $\mathbf b_x^{C,\eta}:=\bigcup_{\beta\in C\setminus\dom(x)}b_x^{\alpha,C\cap\beta,\eta\restriction\beta}$ is an element of $\mathcal B(T^\eta)$ extending $x$,
and we shall let
\begin{equation}\tag{$\star$}\label{promise}L^{\eta}:=\{\mathbf{b}_x^{C,\eta}\mid C\in\mathcal C_\alpha, x\in T^{\eta}\restriction C\}.\end{equation}

Let $C\in\mathcal C_\alpha$. We now turn to define the components of the matrix $\mathbb B^C$ by recursion on $\beta\in C$.
So suppose that $\beta\in C$ is such that
$$\mathbb B^C_{<\beta}:=\langle b_x^{\alpha,C\cap\bar\beta,\eta}\mid \bar\beta\in C\cap\beta, \eta\in K_{\bar\beta}, x\in T^\eta\restriction C\cap(\bar\beta+1) \rangle$$
has already been defined.

$\br$ For all $\eta\in K_\beta$ and $x\in T^\eta$ such that $\dom(x)=\beta$, let $b_x^{\alpha,C\cap\beta,\eta} := x$.

$\br$ For all $\eta\in K_\beta$ and $x\in T^\eta$ such that $\dom(x) \in C\restriction\beta$,
there are two main cases to consider:

$\br\br$ Suppose that $\beta\in \nacc(C)$ and denote $\beta^-:=\sup(C\cap\beta)$.

$\br\br\br$ Suppose $\beta\in \acc(\kappa)$
and there exists a nonzero cardinal $\tau$ such that all of the following hold:
\begin{enumerate}
\item There exists a sequence $\langle \eta_j\mid j<\tau\rangle$ of nodes in $K_\beta$,
and a maximal antichain $A$ in the product tree $\bigotimes_{j<\tau}T^{\eta_j}$
such that\footnote{As $\beta\in\acc(\kappa)$, it is the case that $\tau$,
$\langle \eta_j\mid j<\tau\rangle$ and $A$ are uniquely determined by~$\Omega_\beta$.}
$$\Omega_\beta=\{(\langle \eta_j\restriction\epsilon\mid j<\tau\rangle,A\cap{}^\tau(^\epsilon\kappa))\mid \epsilon<\beta\};$$
\item $\psi(\beta)$ is a sequence $\langle x_j\mid j<\tau\rangle$ such that $x_j\in T^{\eta_j\restriction\beta^-}\restriction(C\cap\beta^-)$ for every $j<\tau$;
\item There exists a unique $j<\tau$ such that $\eta_j=\eta$ and $x_j=x$.
\end{enumerate}
In this case, by Clauses (1) and (2), the following set is nonempty
$$Q^{C,\beta} := \left\{ \vec t\in \prod\nolimits_{j<\tau}L^{\eta_j}\,\middle|\, \exists\vec s\in A\forall j<\tau\left[ (\vec s(j)\cup b^{\alpha,C\cap\beta^-,\eta_j\restriction\beta^-}_{x_j})\s\vec t(j)\right]\right\},$$
so we let $\vec t:=\min(Q^{C,\beta},\lhd_\kappa)$,
and then we let $b^{\alpha,C\cap\beta,\eta}_x:=\vec t(j)$ for the unique index $j$ of Clause~(3).
It follows that $b^{\alpha,C\cap\beta^-,\eta\restriction\beta^-}_{x}\s\vec t(j)= b^{\alpha,C\cap\beta,\eta}_x$.

$\br\br\br$ Otherwise, let $b_x^{\alpha,C\cap\beta,\eta}$ be the $\lhd_\kappa$-least element of $L^{\eta}\setminus\{\Omega_\beta\}$ extending $b_x^{\alpha,C\cap\beta^-,\eta\restriction\beta^-}$.
As our trees thus far are normal and splitting (in fact, prolific), this is well-defined.

$\br\br$ Suppose that $\beta\in\acc(C)$. Then we define $b_x^{\alpha,C\cap\beta,\eta} := \bigcup\{b_x^{\alpha,C\cap\bar\beta,\eta\restriction\bar\beta}\mid \bar\beta\in C\cap\beta\setminus\dom(x)\}$.
We must show that the latter belongs to $L^{\eta}$.
By the coherence feature of $\vec{\mathcal C}$ and since $\beta\in\acc(C)$, it is the case that $C\cap\beta\in\mathcal C_\beta$,
so, by \eqref{promise}, it suffices to prove that $b_x^{\alpha,C\cap\beta,\eta}=\mathbf{b}_x^{C\cap\beta,\eta}$.
Proving the latter amounts to showing that
$b_x^{\alpha,C\cap\delta,\eta\restriction\delta}=b_x^{\beta,C\cap\delta,\eta\restriction\delta}$ for all $\delta\in C\cap\beta\setminus\dom(x)$.
This is taken care of by the following claim, formalizing
the fact that our construction is following the \emph{microscopic approach}.

\begin{claim}\label{cl441} $\mathbb B^C_{<\beta}=\mathbb B^{C\cap\beta}$. That is, the following matrices coincide:
\begin{itemize}
\item $\langle b_y^{\alpha,C\cap\delta,\rho}\mid \delta\in C\cap\beta, \rho\in K_{\delta}, y\in T^\rho\restriction C\cap(\delta+1) \rangle$;
\item $\langle b_y^{\beta,C\cap\delta,\rho}\mid \delta\in C\cap\beta, \rho\in K_{\delta}, y\in T^\rho\restriction C\cap(\delta+1) \rangle$.
\end{itemize}
\end{claim}
\begin{proof} For the scope of this proof we denote $C\cap\beta$ by $D$.
Now, by induction on $\delta\in D$, we prove that
$$\langle b_y^{\alpha,D\cap\delta,\rho}\mid \rho\in K_\delta, y\in T^\rho\restriction D\cap(\delta+1) \rangle=\langle b_y^{\beta,D\cap\delta,\rho}\mid \rho\in K_\delta, y\in T^\rho\restriction D\cap(\delta+1) \rangle.$$

The base case $\delta=\min(D)=0$ is immediate since $b_\emptyset^{\alpha,\emptyset,\emptyset}=\emptyset=b_\emptyset^{\beta,\emptyset,\emptyset}$.
The limit case $\delta\in\acc(D)$ follows from the continuity of the matrices under discussion as remarked in Footnote~\ref{fnc},
with the exception of those $y$'s such that $\dom(y)=\delta$,
but in this case, $b_y^{\alpha,D\cap\delta,\rho}=y=b_y^{\beta,D\cap\delta,\rho}$ for all $\rho\in K_\delta$.

Finally, assuming that $\delta^-<\delta$ are two successive elements of $D$ such that
$$\begin{aligned}&\langle b_y^{\alpha,D\cap\delta^-,\rho}\mid \rho\in K_{\delta^-}, y\in T^\rho\restriction D\cap({\delta^-}+1) \rangle\\=~&\langle b_y^{\beta,D\cap\delta^-,\rho}\mid \rho\in K_{\delta^-}, y\in T^\rho\restriction D\cap({\delta^-}+1) \rangle,\end{aligned}$$
we argue as follows. Given $\zeta\in K_\delta$ and $z\in T^\zeta\restriction D\cap(\delta+1)$, there are a few possible options.
If $\dom(z)=\delta$, then $b_z^{\alpha,D\cap\delta,\zeta}=z=b_z^{\beta,D\cap\delta,\zeta}$, and we are done.
If $\dom(z)<\delta$, then $\dom(z)\le\delta^-$ and, by the above construction,
for every $\gamma\in\{\alpha,\beta\}$, the value of
$b_z^{\gamma,D\cap\delta,\zeta}$ is completely determined by $\delta$, $\langle L^\rho\mid\rho\in K\restriction(\delta+1)\rangle$, $\Omega_\delta$, $D$,
$\psi(\delta)$, $\zeta$, $x$, and
$\langle b_y^{\gamma,D\cap\delta^-,\rho}\mid \rho\in K_{\delta^-},\allowbreak y\in T^\rho\restriction (D\cap\delta^-)\rangle$
in such a way that our inductive assumptions imply that $b_z^{\alpha,D\cap\delta,\zeta}=b_z^{\beta,D\cap\delta,\zeta}$.
\end{proof}

This completes the definition of the matrix $\mathbb B^C$,
from which we derive
$\mathbf b_x^{C,\eta}:=\bigcup_{\beta\in C\setminus\dom(x)}b_x^{\alpha,C\cap\beta,\eta\restriction\beta}$ for all $\eta\in K_\alpha$ and $x\in T^\eta\restriction C$.
Finally, we define $L^\eta$ as per \eqref{promise}.

\begin{claim}\label{c441} For all $\eta\in K_\alpha$, $C\in\mathcal C_\alpha$, and $t\in \{\mathbf{b}_x^{C,\eta}\mid x\in T^\eta\restriction C\}$, there exists a tail of $\varepsilon\in C$ such that $t=\mathbf b^{C,\eta}_{t\restriction\varepsilon}$.
\end{claim}
\begin{proof}
Let $t$ be as above. Fix an $x\in T^\eta\restriction C$ such that $t=\mathbf{b}_x^{C,\eta}$.
Then, by the nature of the above construction, $t\restriction\dom(x)=x$,
and for every given $\varepsilon\in C\setminus\dom(x)$, it is the case that
$t\restriction\varepsilon=b^{\alpha,C\cap\varepsilon,\eta\restriction\varepsilon}_x$,
and of course $b^{\alpha,C\cap\varepsilon,\eta\restriction\varepsilon}_{t\restriction\varepsilon}=t\restriction\varepsilon$.
That is, $b^{\alpha,C\cap\varepsilon,\eta\restriction\varepsilon}_x=b^{\alpha,C\cap\varepsilon,\eta\restriction\varepsilon}_{t\restriction\varepsilon}$.
Then, a similar analysis to that of Claim~\ref{cl441} yields that for every $\delta\in C\setminus\varepsilon$,
$b^{\alpha,C\cap\delta,\eta\restriction\delta}_x=b^{\alpha,C\cap\delta,\eta\restriction\delta}_{t\restriction\varepsilon}$.
Altogether, $t=\mathbf b^{C,\eta}_{x}=\mathbf b^{C,\eta}_{t\restriction\varepsilon}$.
\end{proof}

At the end of the above process, for every $\eta\in\mathcal B(K)$, we have obtained a normal streamlined prolific $\kappa$-tree $T^\eta:=\bigcup_{\alpha<\kappa}L^{\eta\restriction\alpha}$
whose $\alpha^{\text{th}}$ level is $L^{\eta\restriction\alpha}$.

\begin{claim}\label{c413} Suppose:
\begin{itemize}
\item $\tau$ is nonzero cardinal such that $\kappa$ is $\tau$-closed;
\item $\S\in\mathcal S$ is such that $\S\setminus E^\kappa_{>\tau}$ is nonstationary;
\item $\langle \eta_j\mid j<\tau\rangle$ is an injective sequence of elements of $\mathcal B(K)$.
\end{itemize}

The product tree $\bigotimes_{j<\tau}T^{\eta_j}$ is a $\kappa$-Souslin tree.
\end{claim}
\begin{proof} For the sake of this proof, denote $\bigotimes_{j<\tau}T^{\eta_j}$ by $\mathbf T=(T,{<_T})$.
As $\kappa$ is $\tau$-closed, $\mathbf T$ is a (splitting, normal) $\kappa$-tree.
Thus, to show that it is a $\kappa$-Souslin tree, it suffices to establish that it has no antichains of size $\kappa$.
To this end, let $A$ be a maximal antichain in $\mathbf T$.

Set $\Omega:=\{(\langle \eta_j\restriction\epsilon\mid j<\tau\rangle,A\cap{}^\tau(^\epsilon\kappa))\mid \epsilon<\kappa\}$.
As an application of $\diamondsuit(H_\kappa)$,
using the parameter $p:=\{\phi, A,\Omega,\langle T^{\eta_j}\mid j<\tau\rangle\}$,
we get that for every $i<\kappa$, the following set is stationary in $\kappa$:
$$B_i:=\{\beta\in R_i\cap\acc(\kappa)\mid \exists \mathcal M\prec H_{\kappa^+}\,(p\in \mathcal M, \beta=\kappa\cap\mathcal M, \Omega_\beta=\Omega\cap \mathcal M)\}.$$

Note that, for every $\beta\in \bigcup_{i<\kappa}B_i$,
it is the case that $T\restriction\beta\s\phi[\beta]$.

Fix a large enough $\delta<\kappa$ for which the map $j\mapsto\eta_j\restriction\delta$ is injective over $\tau$.
By the choice of $\vec{\mathcal C}$, we may now find an ordinal $\alpha\in \S\cap E_{>\tau}^\kappa$ above $\delta$ such that
$\mathcal C_\alpha$ is a singleton, say $\mathcal C_\alpha=\{C_\alpha\}$, and, for all $i<\alpha$,
$$\sup(\nacc(C_\alpha)\cap B_i)=\alpha.$$

In particular, $T\restriction\alpha\s\phi[\alpha]$.
Set $\bar\eta_j:=\eta_j\restriction\alpha$ for each $j<\tau$,
and note that $T\restriction\alpha=\bigotimes_{j<\tau}T^{\bar\eta_j}$.

\begin{subclaim} $A\s T\restriction\alpha$. In particular, $|A|<\kappa$.
\end{subclaim}
\begin{proof}
It suffices to show that every node in $T_\alpha$ extends some element of the antichain $A$.
To this end, let $\vec y=\langle y_j\mid j<\tau\rangle$ be an arbitrary node in $T_\alpha$.
By \eqref{promise}, for each $j<\tau$, we may find some $x_j\in T^{\bar\eta_j}\restriction C_\alpha$ such that $y_j=\mathbf b_{x_j}^{C_\alpha,\bar\eta_j}$.
By Claim~\ref{c441} and the fact that $\alpha\in E_{>\tau}^\kappa$,
we may assume the existence of a large enough $\gamma\in C_\alpha\setminus(\delta+1)$ such that $\dom(x_j)=\gamma$ for all $j<\tau$.
In particular, $\vec x:=\langle x_j\mid j<\tau\rangle$ is a node in $T\restriction\alpha\s\phi[\alpha]$.
Fix some $i<\alpha$ such that $\phi(i)=\vec x$,
and then pick a large enough $\beta\in \nacc(C_\alpha)\cap B_i$ for which $\beta^-:=\sup(C_\alpha\cap\beta)$ is bigger than $\gamma$.
Note that $\psi(\beta)=\phi(\pi(\beta))=\phi(i)=\vec x$,
$\langle \bar\eta_j\restriction\beta\mid j<\tau\rangle$ is an injective sequence,
and
$$\delta<\gamma<\beta^-<\beta<\alpha.$$

Let $\mathcal M\prec H_{\kappa^+}$ be a witness for $\beta$ being in $B_i$.
Clearly,
\begin{itemize}
\item $T\cap\mathcal M=T\restriction\beta=\bigotimes_{j<\tau}T^{\bar\eta_j\restriction\beta}$,
\item $A\cap\mathcal M=A\cap(T\restriction\beta)$ is a maximal antichain in $T\restriction\beta$, and
\item $\Omega_\beta=\Omega\cap \mathcal M=\{(\langle \eta_j\restriction\epsilon\mid j<\tau\rangle,A\cap{}^\tau(^\epsilon\kappa))\mid \epsilon<\beta\}$.
\end{itemize}

It thus follows that for every $j<\tau$,
$b^{\alpha,C_\alpha\cap\beta,\bar\eta_j\restriction\beta}_{x_j}=\vec t(j)$, where $\vec t=\min(Q^{C_\alpha,\beta},\allowbreak\lhd_\kappa)$.
In particular, we may fix some $\vec s\in A$ such that, for every $j<\tau$,
$$(\vec s(j)\cup b^{\alpha,C_\alpha\cap\beta^-,\bar\eta_j\restriction\beta^-}_{x_j})\s\vec t(j)=b^{\alpha,C_\alpha\cap\beta,\bar\eta_j\restriction\beta}_{x_j}\s\mathbf b_{x_j}^{C_\alpha,\bar\eta_j}=y_j.$$
So $\vec s<_T \vec y$. As $\vec s$ is an element of $A$, we are done.
\end{proof}
This completes the proof.
\end{proof}

As a final step, we consider the tree $T:=\bigcup\{ T^\eta\mid \eta\in\mathcal B(K)\}$.
Evidently, the $\alpha^{\text{th}}$ level of $T$ is the union of $|K_\alpha|$ many sets of size less than $\kappa$.
Thus, if $K$ is a $\kappa$-tree, then so is $T$.

\begin{claim} $T$ has no $\kappa$-branches.
\end{claim}
\begin{proof} Towards a contradiction, suppose that $f\in\mathcal B(T)$.
Fix an $i<\alpha$ such that $\phi(i)=\emptyset$.
As an application of $\diamondsuit(H_\kappa)$,
we get that the following set is stationary in $\kappa$:
$$B_i:=\{\beta\in R_i\mid f\restriction\beta=\Omega_\beta\}.$$
By the choice of $\vec{\mathcal C}$, we may now find an ordinal $\alpha\in\acc(\kappa)$ such that
$\mathcal C_\alpha$ is a singleton, say $\mathcal C_\alpha=\{C_\alpha\}$, and
$$\sup(\nacc(C_\alpha)\cap B_i)=\alpha.$$

Recalling \eqref{promise}, fix $\eta\in K_\alpha$ and $x\in T^{\eta}\restriction C_\alpha$ such that $f\restriction\alpha=\mathbf b_{x}^{C_\alpha,\eta}$.
Pick a large enough $\beta\in \nacc(C_\alpha)\cap B_i$ for which $\beta^-:=\sup(C_\alpha\cap\beta)$ is bigger than $\dom(x)$.
As $\psi(\beta)=\phi(\pi(\beta))=\phi(i)=\emptyset$,
it is the case that $b_x^{\alpha,C_\alpha\cap\beta,\eta\restriction\beta}$ is an element of $L^{\eta\restriction\beta}\setminus\{\Omega_\beta\}$.
In particular, $\mathbf b_x^{C_\alpha,\eta}\restriction\beta\neq \Omega_\beta$, contradicting the fact that $\mathbf b_x^{C_\alpha,\eta}=f\restriction\alpha$ and $\Omega_\beta=f\restriction\beta$.
\end{proof}

This completes the proof.
\end{proof}
\begin{remark} It is tedious yet not impossible to verify that for every nonzero cardinal $\tau$ such that $\kappa$ is $\tau$-closed
and such that there exists $\S\in\mathcal S$ for which $\S\setminus E^\kappa_{>\tau}$ is nonstationary,
for every injective sequence $\langle \eta_j\mid j<\tau\rangle$ of elements of $\mathcal B(K)$,
not only that the product tree $\bigotimes_{j<\tau}T^{\eta_j}$ is $\kappa$-Souslin,
but in fact, all of its $\tau$-derived trees are $\kappa$-Souslin. In particular, for every $\eta\in\mathcal B(K)$, $T^\eta$ is a free $\kappa$-Souslin tree.
\end{remark}

We now arrive at the following strong form of Theorem~\ref{thma}:
\begin{cor}\label{cor46} Suppose that $\p(\kappa,\kappa,{\sq},\kappa,\{\kappa\},2)$ holds, and let $\varsigma\in[2,\kappa)$.
For every cardinal $\mho$, if there exists a $\kappa$-tree with $\mho$-many cofinal branches,
then there exists a $\varsigma$-splitting $\kappa$-Aronszajn subtree of ${}^{<\kappa}\varsigma$ admitting $\mho$-many streamlined $\kappa$-Souslin subtrees
such that the product of any finitely (nonzero) many of them is again Souslin.
\end{cor}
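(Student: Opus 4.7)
The plan is to derive the corollary as a direct application of Theorem~\ref{thm44}, supplying a suitable input tree $K$ and then replaying the construction inside that theorem with a minor cosmetic change so as to output $\varsigma$-splitting subtrees of ${}^{<\kappa}\varsigma$ rather than prolific trees. Two ingredients must be furnished: (a) an input tree $K$ that produces exactly $\mho$ many cofinal branches, and (b) a verification that the product clause~(1) of the main theorem fires for every positive integer $n$.

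For~(a), I would take the given $\kappa$-tree with $\mho$ branches, pass to its normal subtree (which has the same set of branches), and realize it as a normal streamlined tree $K\s{}^{<\kappa}H_\kappa$ by an injective level-by-level coding of its nodes as ordinals below~$\kappa$; this preserves the branch count. Invoking Theorem~\ref{thm44} with this $K$ and with $\mathcal S:=\{\kappa\}$---which is permitted by the hypothesis $\p(\kappa,\kappa,{\sq},\kappa,\{\kappa\},2)$---then supplies a family $\langle T^\eta\mid \eta\in\mathcal B(K)\rangle$ of $\mho$ many prolific normal streamlined $\kappa$-Souslin trees, together with their $\kappa$-Aronszajn union $T=\bigcup\{T^\eta\mid\eta\in\mathcal B(K)\}$ (by clause~(3) of the theorem, since $K$ is a $\kappa$-tree). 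For~(b), for every positive integer $n$ the cardinal $\kappa$ is $n$-closed, as $\lambda^n=\lambda$ whenever $\lambda<\kappa$ is infinite, and $\{\kappa\}\setminus E^\kappa_{>n}$ is nonstationary because $E^\kappa_{>n}$ contains every nonzero limit ordinal below $\kappa$. Hence clause~(1) of Theorem~\ref{thm44} applies with $\tau:=n$, certifying that the product of any $n$ distinct $T^{\eta_j}$'s is $\kappa$-Souslin.

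The main (though mild) obstacle is ensuring that the output lies inside ${}^{<\kappa}\varsigma$ and is $\varsigma$-splitting rather than prolific. The standard fix, suggested in Subsection~\ref{slimvscomplete}, is to rerun the proof of Theorem~\ref{thm44} with its successor-step definition $L^\eta:=\{t^\smallfrown\langle\iota\rangle\mid t\in L^{\eta\restriction\alpha},\,\iota<\max\{\omega,\alpha\}\}$ replaced by $L^\eta:=\{t^\smallfrown\langle\iota\rangle\mid t\in L^{\eta\restriction\alpha},\,\iota<\varsigma\}$. All downstream arguments---the coherent matrices $\mathbb B^C$, the branches $\mathbf b_x^{C,\eta}$, Claims~\ref{cl441} and~\ref{c441}, and the Souslin, product-Souslin, and non-branching verifications---should go through verbatim. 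The only point to check is that the ``otherwise'' sub-case at non-accumulation points of $C$, which selects the $\lhd$-least element of $L^\eta\setminus\{\Omega_\beta\}$ extending the previously built $b_x^{\alpha,C\cap\beta^-,\eta\restriction\beta^-}$, remains well-defined; this is immediate since $\varsigma\ge 2$ guarantees at least two immediate successors at every level, so an alternative to $\Omega_\beta$ is always available. The resulting trees $T^\eta$ are then $\varsigma$-splitting normal streamlined $\kappa$-Souslin subtrees of ${}^{<\kappa}\varsigma$, and their union is the required $\varsigma$-splitting $\kappa$-Aronszajn subtree of ${}^{<\kappa}\varsigma$ admitting $\mho$-many Souslin subtrees whose finite nonzero products are $\kappa$-Souslin.
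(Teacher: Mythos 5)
Your proposal is correct and follows essentially the same route as the paper: reduce to Theorem~\ref{thm44} with $\mathcal S:=\{\kappa\}$, noting that every positive integer $\tau$ trivially satisfies the closure hypothesis and that $\kappa\setminus E^\kappa_{>\tau}$ is nonstationary, and then appeal to the standard prolific-to-$\varsigma$-splitting transformation of Subsection~\ref{slimvscomplete}. The only cosmetic difference is that the paper obtains a suitable normal streamlined $K$ by citing a ready-made lemma (\cite[Lemma~2.5]{paper23}) rather than carrying out the level-by-level coding by hand as you do, but the substance is identical.
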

\begin{proof} By a standard fact (see \cite[Lemma~2.5]{paper23}), if there exists a $\kappa$-tree with $\mho$-many cofinal branches,
then there exists one $K$ that is streamlined. Now, appeal to Theorem~\ref{thm44} with $K$ and $\mathcal S:=\{\kappa\}$,
bearing in mind Subsection~\ref{slimvscomplete}.
\end{proof}

\section{A free Souslin tree with a special power}\label{section:special}
Throughout this section, $\kappa=\lambda^+$ for a fixed infinite cardinal $\lambda$.
Our goal here is to present a construction of a $\kappa$-Souslin tree whose derived trees are special.
In the context of successor cardinals, the general definition of special trees given in Subsection~\ref{tree-properties} (following~\cite[p.~266]{MR908147})
coincides with a classical definition, as follows.

\begin{fact}[{\cite[Theorem~14]{MR0793235}}; see also {\cite[Theorem~16]{MR3274402}}]
A tree of height $\lambda^+$ is \emph{special} iff it may be covered by $\lambda$ many antichains.
\end{fact}

Consider the linear order $\mathbb{Q}_\lambda$ consisting of all nonempty \emph{finite} sequences of ordinals in $\lambda$,
with the ordering $q\ll p$ iff either $p\subsetneq q$ or $q(n)<p(n)$
for the least $n<\omega$ such that $q(n)\neq p(n)$.
This is a linear order of size $\lambda$ having no first or last element,
satisfying that in-between any two of its elements there are $\lambda$-many elements including an increasing sequence of order-type $\lambda$,
and that all of its subsets of size less than $\cf(\lambda)$ have an upper bound.\footnote{This definition may be found in \cite[p.~273]{MR776625} and \cite[Conventions~4.1(4)]{sh:221}.
We warn the reader that some authors denote by $\mathbb Q_\lambda$ a different linear order that shares some of the above features, but whose size is sensitive to cardinal-arithmetic assumptions.}

As $|\mathbb Q_\lambda| = \lambda$,
to show that a tree $(T,{<_T})$ of height $\lambda^+$ is special,
it suffices to exhibit a strictly increasing map $f:T\rightarrow\mathbb{Q}_\lambda$,
i.e., such that $s<_T t$ implies $f(s)\ll f(t)$.
In fact, the existence of a special $\lambda^+$-tree is equivalent to the existence of a $\lambda^+$-tree $T$
admitting a strictly increasing map $f:T\rightarrow\mathbb{Q}_\lambda$.\footnote{Jensen \cite{MR309729}
proved that the existence of a special $\lambda^+$-tree is equivalent to $\square^*_\lambda$,
whereas the streamlined $\lambda^+$-tree $T(\rho_0)$ obtained by conducting walks on ordinals along a $\square^*_\lambda$-sequence
can be shown to admit a strictly increasing map to $\mathbb Q_\lambda$.}

\begin{defn}
For a subset $T\s{}^{<\kappa}H_\kappa$ and a nonzero cardinal $\chi$:
\begin{itemize}
\item Denote $T^\chi:=\{\vec x:\chi\stackrel{1-1}{\longrightarrow} T\mid ({\dom}\circ{\vec x})\text{ is constant}\}$.
The ordering $<_{T^\chi}$ of $T^\chi$ is defined as follows:\footnote{If $T$ is a streamlined tree, then
elements of $T^\chi$ are sometimes referred to as \emph{injective level sequences},
and $(T^\chi,{<_{T^\chi}})$ should be understood as the union of all $\chi$-derived trees of $T$.
We shall soon construct a streamlined $\lambda^+$-tree $T$ such that $(T^\chi,{<_{T^\chi}})$ admits a strictly increasing map to $\mathbb Q_\lambda$,
from which it will follow that all $\chi$-derived trees of $T$ are special.}
$$\vec x<_{T^\chi}\vec y\iff \bigwedge_{i<\chi}\vec x(i)\stree\vec y(i).$$
\item For $\vec x\in T^\chi$, let $\iota(\vec x):=\min\{\alpha<\kappa\mid \langle\vec x(i)\restriction\alpha\mid i<\chi\rangle\text{ is injective}\}$.
\end{itemize}
\end{defn}

\begin{notation}
Dual to Notation~\ref{notationcomp},
for every $y\in{}^\chi({}^\alpha H_\kappa)$, we let $\fork y$ denote the unique function from $\alpha$ to ${}^\chi H_\kappa$
satisfying $(\fork y)_i=y(i)$ for every $i<\chi$.
This will allow us to move back and forth between the classical and streamlined representations of derived trees.
\end{notation}

\begin{notation}
For every $T\in H_\kappa$, denote $\beta(T):=0$ unless there is $\beta<\kappa$
such that $T\s{}^{\le\beta}H_\kappa$ and $T \nsubseteq {}^{<\beta}H_\kappa$,
in which case, we let $\beta(T):=\beta$ for this unique $\beta$.
\end{notation}

Fix some well-ordering $\lhd_\kappa$ of $H_\kappa$.
We collect here a couple of \emph{actions} from \cite[\S6.2]{paper23} which will be used in the upcoming construction.
The readers can verify to themselves that additional actions from the same reference can be incorporated into the upcoming proof.

\begin{defn}\label{actions}
\begin{enumerate}
\item The default extension function,
$\defaultaction:(H_\kappa)^2\rightarrow H_\kappa$, is defined as follows.
Let $\defaultaction(x,T):=x$, unless $$Q:=\{ z\in T_{\beta(T)}\mid x\s z\}$$ is nonempty,\footnote{Recall that $T_{\beta}$ stands for $\{ x\in T\mid \dom(x)=\beta\}$.}
in which case, we let $\defaultaction(x,T):=\min(Q, {\lhd_\kappa})$.

\item The function for sealing antichains, $\sealantichain:(H_\kappa)^3\rightarrow H_\kappa$,
is defined as follows.
Let $\sealantichain(x,T,\mho):=\defaultaction(x,T)$,
unless $$Q:=\{ z\in T_{\beta(T)}\mid \exists y\in\mho\,( x\cup y\s z)\}$$ is nonempty,
in which case, we let $\sealantichain(x,T,\mho):=\min(Q, {\lhd_\kappa})$.
\end{enumerate}
\end{defn}

The following is obvious.

\begin{lemma}\label{extendfact}
Suppose $T,\mho \in H_\kappa$, where $T$ is a normal streamlined subtree of ${}^{\le\beta(T)}H_\kappa$.
For every $x\in T$, $\sealantichain(x,T,\mho)$ is a node in $T_{\beta(T)}$ extending~$x$.
\qed
\end{lemma}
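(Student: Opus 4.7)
The plan is to verify the lemma by a straightforward case analysis on the definition of $\sealantichain$. First, unpack Definition~\ref{actions}(2): letting $Q:=\{z\in T_{\beta(T)}\mid \exists y\in\mho\,(x\cup y\s z)\}$, we either have $\sealantichain(x,T,\mho)=\min(Q,\lhd_\kappa)$ when $Q\neq\emptyset$, or else $\sealantichain(x,T,\mho)=\defaultaction(x,T)$. In the first case, the conclusion is immediate: the chosen $z\in Q$ lies in $T_{\beta(T)}$, and the defining condition $x\cup y\s z$ for some $y\in\mho$ forces $x\s z$.

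For the second case, I would reduce to the analogous claim for $\defaultaction$. Setting $Q':=\{z\in T_{\beta(T)}\mid x\s z\}$, Definition~\ref{actions}(1) tells us that $\defaultaction(x,T)=\min(Q',\lhd_\kappa)$ whenever $Q'\neq\emptyset$. So it suffices to show $Q'\neq\emptyset$. This is exactly where the hypothesis that $T$ is a normal subtree of ${}^{\leq\beta(T)}H_\kappa$ enters: since $x\in T$ has $\dom(x)\leq\beta(T)$, the normality of $T$ supplies some $z\in T_{\beta(T)}$ with $x\s z$. Hence $Q'$ is nonempty and $\defaultaction(x,T)\in T_{\beta(T)}$ extends $x$.

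I do not anticipate any obstacle here — this is indeed an essentially bookkeeping lemma, whose only content is the observation that normality of $T$ guarantees that every node of $T$ admits an extension to the top level $T_{\beta(T)}$. The lemma is stated separately because it will be invoked repeatedly in the recursive construction of the forthcoming section (where $T$ plays the role of the partial tree built so far and $\mho$ records an antichain to be sealed), and having the one-line case analysis codified saves repetition. No further ingredients beyond Definition~\ref{actions} and the definition of a normal streamlined tree are needed.
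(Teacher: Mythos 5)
Your proof is correct, and since the paper itself declares the lemma "obvious" and gives no argument (just \qed), you have simply spelled out the bookkeeping that the authors left implicit — the two-case unpacking of $\sealantichain$, with normality guaranteeing that the auxiliary set in the definition of $\defaultaction$ is nonempty — which is exactly the intended reading.
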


Hereafter, $\chi$ denotes some cardinal in $[2,\omega]$.
The next batch of definitions is motivated by the Abraham--Shelah--Solovay construction from \cite[\S4]{sh:221}.
Note, however, that the approach taken here is eventually quite different from the one in~\cite{sh:221},
since it works uniformly for $\lambda$ both singular and regular.

\begin{defn} We define three maps $\varphi_0,\varphi_1,\varphi_2:\mathbb Q_\lambda\rightarrow\mathbb Q_\lambda$ via:
\begin{itemize}
\item $\varphi_2(q):=\begin{cases}
\langle\xi+1\rangle,&\text{if }q=\langle\xi\rangle;\\
p,&\text{if }q=p^\smallfrown\langle\xi\rangle\text{ for }p\neq\emptyset.
\end{cases}$
\vspace{5pt}
\item $\varphi_1(q):=\begin{cases}
\langle\xi+1,0\rangle,&\text{if }q=\langle\xi\rangle;\\
p^\smallfrown\langle\xi+1\rangle,&\text{if }q=p^\smallfrown\langle\xi\rangle\text{ for }p\neq\emptyset.
\end{cases}$
\vspace{5pt}
\item $\varphi_0(q):=\begin{cases}
\langle\xi+1,0,0\rangle,&\text{if }q=\langle\xi\rangle;\\
p^\smallfrown\langle\xi+1,0\rangle,&\text{if }q=p^\smallfrown\langle\xi\rangle\text{ for }p\neq\emptyset.
\end{cases}$
\end{itemize}
\end{defn}
\begin{remark}\label{rmk417} For every $q$ in $\mathbb Q_\lambda$,
all of the following hold:
\begin{itemize}
\item $q\ll \varphi_0(q)\ll \varphi_1(q)\ll \varphi_2(q)$;
\item $\varphi_2(q)=\varphi_2(\varphi_1(q))$;
\item $\varphi_1(q)=\varphi_2(\varphi_0(q))$.
\end{itemize}
\end{remark}

\begin{defn}[Elevators] For a streamlined tree $T$,
a map $f:T^\chi\rightarrow\mathbb Q_\lambda$,
two maps $\varphi,\psi:\mathbb Q_\lambda\rightarrow\mathbb Q_\lambda$,
and ordinals $\beta<\alpha$,
we say that a function $e:T_\beta\rightarrow T_\alpha$ is a \emph{$(\varphi,\psi)$-elevator} (with respect to $f$)
iff the two hold:
\begin{enumerate}
\item $y\stree e(y)$ for every $y\in T_\beta$, and
\item for every $\langle y_i\mid i<\chi\rangle\in (T_\beta)^\chi$,
$$(\varphi\circ f)(\langle e(y_i)\mid i<\chi\rangle)=(\psi\circ f)(\langle y_i\mid i<\chi\rangle).$$
\end{enumerate}

A map $e$ satisfying just Clause~(1) will be simply referred to as an \emph{elevator}.
\end{defn}

\begin{defn}[Coordination]\label{def_order} Let $T$ be a streamlined tree, and let $f$ be a function from $T^\chi$ to $\mathbb Q_\lambda$.
For a pair of ordinals $\beta<\alpha$, we say that $T_\beta$ and $T_\alpha$ are \emph{coordinated} (with respect to $f$) iff for all nonzero $n<\chi$
and all $\langle z_j\mid j<n\rangle\in (T_\alpha)^n$ such that $\iota(\langle z_j\mid j<n\rangle)\leq\beta$, the following three hold:
\begin{itemize}
\item[(i)] there exists a $(\varphi_2,\varphi_1)$-elevator $e_1:T_\beta\rightarrow T_\alpha$ such that $e_1(z_j\restriction\beta)=z_j$ for all $j<n$;
\item[(ii)] there exists a $(\varphi_2,\varphi_2)$-elevator $e_2:T_\beta\rightarrow T_\alpha$ such that $e_2(z_j\restriction\beta)=z_j$ for all $j<n$;
\item[(iii)] if $\alpha=\beta+1$, then for every $m<3$, there exists an $(\id,\varphi_m)$-elevator $e:T_\beta\rightarrow T_\alpha$ such that $e(z_j\restriction\beta)=z_j$ for all $j<n$.
\end{itemize}
\end{defn}

We are now ready to prove the main result of this section, which also yields Theorem~\ref{thmb}.

\begin{thm}\label{thm710}
Suppose that $\p_\lambda(\lambda^+,2,{\sq},\allowbreak\lambda^+)$ holds.
Let $\chi\in[2,\omega]$ with $\chi<\cf(\lambda)$.
Then there is a normal $\chi$-free, slim, prolific, club-regressive,
streamlined $\lambda^+$-Souslin tree $T$ such that $(T^\chi,{<_{T^\chi}})$ admits a strictly increasing map to $\mathbb Q_\lambda$.
\end{thm}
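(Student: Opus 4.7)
The plan is to run a microscopic-approach recursion along the ladder sequence, simultaneously building $T$ and the specializing map $f:T^\chi\to\mathbb{Q}_\lambda$. Fix a witness $\vec C=\langle C_\alpha\mid\alpha<\lambda^+\rangle$ to $\p^-_\lambda(\lambda^+,2,{\sq},\lambda^+)$, sequences $\langle\Omega_\beta\mid\beta<\lambda^+\rangle$ and $\langle R_i\mid i<\lambda^+\rangle$ witnessing $\diamondsuit(H_{\lambda^+})$ as in Fact~\ref{diamondhkappa}, and a well-ordering $\lhd$ of $H_{\lambda^+}$. The recursion on $\alpha<\lambda^+$ produces a prolific, normal, $\lambda$-splitting streamlined tree $T$ together with values $f(\vec y)\in\mathbb{Q}_\lambda$ for every injective $\vec y\in{}^\chi T_\alpha$, subject to two invariants maintained throughout: $(a)$~$f$ is strictly $\ll$-increasing along $<_{T^\chi}$; and $(b)$~any two already-constructed levels $T_\beta,T_\alpha$ are coordinated with respect to $f$ in the sense of Definition~\ref{def_order}.

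At a successor step $\alpha\mapsto\alpha+1$, we attach $\lambda$-many immediate successors to every node and assign $f$-values to injective $\vec z\in{}^\chi T_{\alpha+1}$ by applying one of $\varphi_0,\varphi_1,\varphi_2$ to $f(\langle\vec z(i)\upto\alpha\mid i<\chi\rangle)$ when the restricted tuple is still injective (and otherwise starting from a fresh value dictated by $\lhd$); the choice of $\varphi_m$ is governed by a coding of the new level that makes clause~(iii) of Definition~\ref{def_order} hold outright, while the identities $\varphi_2\circ\varphi_1=\varphi_2$ and $\varphi_2\circ\varphi_0=\varphi_1$ from Remark~\ref{rmk417} render the three flavors of elevator mutually consistent. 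At a limit $\alpha\in\acc(\lambda^+)$, for each $x\in T\upto C_\alpha$ we recursively define along $\beta\in C_\alpha\setminus(\dom(x)+1)$ a node $b^{\alpha,C_\alpha\cap\beta}_x\in T_\beta$ extending $x$: at generic successor steps of $C_\alpha$ we apply the \defaultaction{} action of Definition~\ref{actions}(1); at $\beta\in\nacc(C_\alpha)$ predicted by the diamond to code a maximal antichain of $T\upto\beta$ or of some $\tau$-derived subtree of $T\upto\beta$ with nonzero $\tau<\chi$, we apply the \sealantichain{} action of Definition~\ref{actions}(2); and at $\bar\beta\in\acc(C_\alpha)$ we take unions, well-defined by $\sq$-coherence, exactly as in the proof of Theorem~\ref{thm44}. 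Set $T_\alpha:=\{\bigcup_{\beta\in C_\alpha\setminus\dom(x)}b^{\alpha,C_\alpha\cap\beta}_x\mid x\in T\upto C_\alpha\}$.

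To extend $f$ to an injective $\vec y\in{}^\chi T_\alpha$, choose $\beta_0\in C_\alpha$ with $\Delta(\vec y)<\beta_0$. The $(\varphi_2,\varphi_2)$-elevator structure of clause~(ii) of Definition~\ref{def_order}, carried inductively up the chain of levels through $C_\alpha$ above $\beta_0$, forces $\varphi_2\circ f$ to stabilize along $\langle\langle\vec y(i)\upto\beta\mid i<\chi\rangle\mid \beta\in C_\alpha\setminus\beta_0\rangle$, so any $q\in\mathbb{Q}_\lambda$ with $\varphi_2(q)$ equal to this common value and $q\gg f(\langle\vec y(i)\upto\beta\mid i<\chi\rangle)$ for all relevant $\beta$ is admissible; such $q$ exist because $\chi<\cf(\lambda)$ caps the relevant chain at size $<\cf(\lambda)$, placing us within the boundedness regime of $\mathbb{Q}_\lambda$. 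Take $f(\vec y)$ to be the $\lhd$-least admissible value, chosen coherently across tuples so that invariants~$(a)$ and~$(b)$ lift to the new level. Souslinity and $\chi$-freeness of $T$ then follow in the standard manner from the hitting feature of $\p^-_\lambda$ combined with the diamond bookkeeping: any maximal antichain of $T$, or of a $\tau$-derived subtree for $\tau<\chi$, is captured at stationarily many $\alpha$ and sealed at some $\beta\in\nacc(C_\alpha)$ via \sealantichain. Club-regressivity is an automatic byproduct of the microscopic approach (compare \cite[Proposition~2.3]{paper22}), and invariant~$(a)$ yields that $T^\chi$ is special via $f$.

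The main obstacle is sustaining invariant~$(b)$ across limit stages. After the branches $b^{\alpha,C_\alpha}_x$ are fixed, one has to verify that for every $n<\chi$ and every $n$-tuple $\langle z_j\mid j<n\rangle\in(T_\alpha)^n$ with $\Delta(\langle z_j\mid j<n\rangle)<\beta\in C_\alpha$, the $(\varphi_2,\varphi_1)$- and $(\varphi_2,\varphi_2)$-elevators $T_\beta\to T_\alpha$ demanded by Definition~\ref{def_order} exist as genuine functions and agree with the microscopically-built branches. This tightly constrains the successor-step coding of $\varphi_m$-applications so that both \defaultaction{} and \sealantichain{} respect the coordinated structure, which is precisely where the algebraic identities of Remark~\ref{rmk417} get consumed. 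The hypothesis $\chi<\cf(\lambda)$ plays a double role: it guarantees upper bounds in $\mathbb{Q}_\lambda$ at limits, and permits a single limit-stage branch construction to seal antichains in all $\tau$-derived trees with $\tau<\chi$ simultaneously.
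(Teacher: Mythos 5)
Your high-level architecture matches the paper's: a microscopic recursion along a square-sequence, with $f:T^\chi\to\mathbb{Q}_\lambda$ built level-by-level, the coordination invariant of Definition~\ref{def_order} maintained, and $\sealantichain$ used at diamond-predicted spots. But there is a genuine gap in how you extend the branches at limit levels, and it propagates to your limit-level definition of $f_\alpha$. You propose to build each branch through a limit $\alpha$ by applying \defaultaction{} at ``generic'' successor steps of $C_\alpha$ and \sealantichain{} at predicted steps. These actions only extend nodes; they carry no information whatsoever about $f$-values. Consequently your key claim --- that ``the $(\varphi_2,\varphi_2)$-elevator structure of clause~(ii) \ldots forces $\varphi_2\circ f$ to stabilize along $\langle\langle\vec y(i)\upto\beta\mid i<\chi\rangle\mid\beta\in C_\alpha\setminus\beta_0\rangle$'' --- does not follow: the coordination invariant gives you \emph{some} $(\varphi_2,\varphi_2)$-elevator $T_{\beta_0}\to T_\beta$, but nothing forces the branch built from \defaultaction{}/\sealantichain{} to be the image of such an elevator. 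The paper circumvents this exactly by never applying \defaultaction{}/\sealantichain{} directly: instead, at each successor step of the ladder, the \emph{entire} step map $T_{\beta^-}\to T_\beta$ is a single $(\id,\varphi_1)$-elevator $e\in\mathcal E^\alpha_\beta$, chosen so that it agrees with $\sealantichain$ on the $\psi(\eta)$-designated nodes but is an elevator everywhere. Only then is the $f$-bookkeeping (Claim~\ref{c4206}) tied to the branch construction. You would need to replace bare \defaultaction{}/\sealantichain{} by this kind of ``sealing-respecting elevator'' to make your invariant~$(b)$ survive the limit step.

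A second, related gap: even granting stabilization of $\varphi_2\circ f$ to a constant $p$, your justification for the existence of an admissible $q$ is wrong. You claim ``such $q$ exist because $\chi<\cf(\lambda)$ caps the relevant chain at size $<\cf(\lambda)$'', but the chain you describe has length $\otp(C_\alpha\setminus\beta_0)$, which can equal $\lambda$; the bound $\chi<\cf(\lambda)$ has nothing to do with it. In fact there may be \emph{no} $q$ with $\varphi_2(q)=p$ dominating the chain --- precisely when $\otp(D_\alpha)=\lambda$ the last digits are cofinal in $\lambda$ --- which is why the paper splits into cases and, when the ladder has full order type $\lambda$, deliberately drops a level and sets $f_\alpha:=p$ (forfeiting the $(\id,\cdot)$-elevator and falling back on a $(\varphi_2,\varphi_2)$-elevator). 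The actual role of $\chi<\cf(\lambda)$ is narrower: it is used only when $\sup\{\dom(x_i)\}=\alpha$, i.e.\ $\cf(\alpha)\le\chi$, to bound the set $F$ there. Finally, a minor but load-bearing point you omit: the paper does not walk along $C_\alpha$ but along the derived sequence $D_\alpha=\{0\}\cup\{\eta+1\mid\eta\in\nacc(C_\alpha)\}\cup\acc(C_\alpha)$, whose non-accumulation points are successor ordinals; this guarantees that clause~(iii) of Definition~\ref{def_order} (which only applies at successor jumps $\alpha\mapsto\alpha+1$) is available to manufacture the $(\id,\varphi_m)$-elevators needed at each non-limit step of the ladder. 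Working with $C_\alpha$ directly, as you propose, leaves you without an $(\id,\cdot)$-elevator when $\nacc(C_\alpha)$ contains limit ordinals.
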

\begin{proof}
Recall that $\kappa=\lambda^+$. Fix $\vec{C}=\langle C_\alpha\mid\alpha<\kappa\rangle$ witnessing $\p^-_\lambda(\kappa,2,{\sq},\kappa)$.
For every $\alpha\in\acc(\kappa)$, let
$$D_\alpha:=\{0\}\cup\{\eta+1\mid \eta\in\nacc(C_\alpha)\}\cup\acc(C_\alpha),$$
so that $D_\alpha$ is a club in $\alpha$ for which $D_\alpha\cap\acc(\kappa)=\acc(D_\alpha)=\acc(C_\alpha)$, so that $\nacc(D_\alpha)\s\nacc(\alpha)$.
Evidently, $\vec D:=\langle D_\alpha\mid\alpha\in\acc(\kappa)\rangle$ is yet another $\lambda$-bounded coherent $C$-sequence.
Recall that by Definition~\ref{Ccharacteristics}, $A(\vec{D})=\{\alpha\in\acc(\kappa)\mid \forall\beta\in(\alpha,\kappa)\,(\alpha\notin\acc(D_\beta))\}$,
so since $\vec D$ is $\lambda$-bounded and coherent, we have here $\{\alpha\in\acc(\kappa)\mid \otp(D_\alpha)=\lambda\}\s A(\vec{D})$.

As $\diamondsuit(\kappa)$ holds, fix sequences $\langle \Omega_\beta\mid\beta<\kappa\rangle$ and $\langle R_i\mid i<\kappa\rangle$
together witnessing $\diamondsuit(H_\kappa)$, as in Fact~\ref{diamondhkappa}.
Let $\pi:\kappa\rightarrow\kappa$ be such that $\beta\in R_{\pi(\beta)}$ for all $\beta<\kappa$.
Let $\lhd_\kappa$ be some well-ordering of $H_\kappa$ of order-type $\kappa$,
and let $\phi:\kappa\leftrightarrow H_\kappa$ witness the isomorphism $(\kappa,\in)\cong(H_\kappa,\lhd_\kappa)$.
Put $\psi:=\phi\circ\pi$.

By recursion on $\alpha<\kappa$, we shall construct $\langle (T_\alpha,f_\alpha)\mid \alpha<\kappa\rangle$
such that $T_\alpha$ will end up being the $\alpha^{\text{th}}$ level of the ultimate normal, slim, streamlined tree $T$,
and $f_\alpha:(T_\alpha)^\chi\rightarrow\mathbb Q_\lambda$ will form the $\alpha^{\text{th}}$ level of the ultimate strictly increasing map $f:T^\chi\to\mathbb{Q}_\lambda$.
We shall also make sure that for all $\beta<\alpha<\kappa$, $T_\beta$ and $T_\alpha$ be coordinated.

By convention, for every $\alpha<\kappa$ such that $\langle (T_\beta,f_\beta)\mid \beta<\alpha\rangle$ has already been defined,
for every $C\s\alpha$, we shall let $T\restriction C:=\bigcup_{\beta\in C}T_\beta$.

The recursion starts by setting $T_0:=\{\emptyset\}$ and letting $f_0$ be the empty function.
Next, given $\alpha<\kappa$ such that $(T_\alpha,f_\alpha)$ has already been successfully defined,
set $T_{\alpha+1}:=\{t^\smallfrown\langle \tau\rangle\mid t\in T_\alpha, \tau<\max\{\omega,\alpha\}\}$.
Before we can define $f_{\alpha+1}:(T_{\alpha+1})^\chi\rightarrow\mathbb Q_\lambda$, we shall need the following claim.
\begin{claim} Let $S_{\alpha+1}:=\bigcup_{0<n<\chi}\{s\in (T_{\alpha+1})^n\mid \iota(s)\le\alpha\}$.
Then there exists a matrix $\langle A_{s,m}\mid s\in S_{\alpha+1},~m<3\rangle$ such that,
for every $(s,m)\in S_{\alpha+1}\times 3$, the following three hold:
\begin{enumerate}
\item $\im(s)\s A_{s,m}\s T_{\alpha+1}$;
\item for every $y\in T_\alpha$, there exists a unique $z\in A_{s,m}$ extending $y$;
\item for every $(s',m')\in S_{\alpha+1}\times 3\setminus\{(s,m)\}$, we have $|A_{s,m}\cap A_{s',m'}|<\chi$.
\end{enumerate}
\end{claim}
\begin{proof} Denote $\alpha':=\max\{\omega,\alpha\}$.
For each $s \in S_{\alpha+1}$, denote $Z_s :=\{z(\alpha)\mid z\in\im(s)\}$,
which is an element of $[\alpha']^{<\chi}$.
As $\chi\le\omega$ and as we have been constructing a slim tree, the definition of $T_{\alpha+1}$ implies that
we may fix an enumeration $\langle s_\gamma\mid \gamma<|\alpha'|\rangle$ of $S_{\alpha+1}$.
Recursively construct a sequence $\langle (\tau_{\gamma,0},\tau_{\gamma,1},\tau_{\gamma,2})\mid \gamma<|\alpha'|\rangle$ consisting of elements of $\alpha'\times\alpha'\times\alpha'$
by letting
$\{\tau_{\gamma,0},\tau_{\gamma,1},\tau_{\gamma,2}\}$ consist of the first three elements of $\alpha'$ that do not belong to the following set:
$$\{\tau_{\beta,m}\mid \beta<\gamma,~m<3\}\cup \bigcup\nolimits_{\beta\le\gamma}Z_{s_\beta}.$$
Finally, for all $\gamma<|\alpha'|$ and $m<3$, let
$$A_{s_\gamma,m}:= \im(s_\gamma)\cup\{y^\smallfrown\langle\tau_{\gamma,m}\rangle\mid y\in T_\alpha\setminus\{ (z\restriction\alpha)\mid z\in\im(s_\gamma)\}\}.$$

It is clear that the above definition takes care of Clauses (1) and (2).
To verify Clause~(3), let $(s,m),(s',m')$ in $S_{\alpha+1}\times 3$ be given.
Find $\beta,\gamma<|\alpha'|$ such that $s=s_\beta$ and $s'=s_\gamma$.
Without loss of generality, $\beta\le\gamma$.
Now, if $|A_{s,m}\cap A_{s',m'}|\ge\chi$, then we may pick $z\in A_{s,m}\cap A_{s',m'}\setminus\im(s')$,
so that $z(\alpha)\in Z_{s_\beta}\cup\{\tau_{\beta,m}\}$ and $z(\alpha)=\tau_{\gamma,m'}$.
As $\tau_{\gamma,m'}\notin Z_{s_\beta}$, it follows that $\tau_{\beta,m}=\tau_{\gamma,m'}$ and hence $(s,m)=(s',m')$.
\end{proof}

Fix a matrix $\langle A_{s,m}\mid s\in S_{\alpha+1},~ m<3\rangle$ as in the preceding claim.
To define $f_{\alpha+1}:(T_{\alpha+1})^\chi\rightarrow \mathbb{Q}_\lambda$,
let $\vec w=\langle w_i\mid i<\chi\rangle$ in $(T_{\alpha+1})^\chi$ be given.
There are three cases to consider:

$\br$ If $\iota(\vec{w})=\alpha+1$, then let $f_{\alpha+1}(\vec{w}):=\langle 0\rangle$.

$\br$ If $\iota(\vec{w})\leq\alpha$ and $\{w_i\mid i<\chi\}\s A_{s,m}$ for some $(s,m)\in S_{\alpha+1}\times 3$, then
by Clause~(3) of the above claim, the pair $(s,m)$ is unique, so we let
$$f_{\alpha+1}(\vec{w}):=(\varphi_m\circ f_\alpha)(\langle w_i\restriction\alpha\mid i<\chi\rangle).$$

$\br$ Otherwise, since $\mathbb Q_\lambda$ has no maximal elements,
let $f_{\alpha+1}(\vec{w})$ be some element of $\mathbb Q_\lambda$ which is bigger than $f_\alpha(\langle w_i\restriction\alpha\mid i<\chi\rangle)$.

Altogether, for every $\vec{w}\in(T_{\alpha+1})^\chi$
such that $\iota(\vec{w})\leq\alpha$,
it is the case that
$f_\alpha(\langle w_i\restriction\alpha\mid i<\chi\rangle)\ll f_{\alpha+1}(\vec{w})$.

\begin{claim} \begin{enumerate}
\item $T_\alpha$ and $T_{\alpha+1}$ are coordinated;
\item For every $\beta<\alpha$, $T_\beta$ and $T_{\alpha+1}$ are coordinated.
\end{enumerate}
\end{claim}
\begin{proof} (1) Consider any given nonzero $n<\chi$ and $\langle z_j\mid j<n\rangle\in (T_{\alpha+1})^n$ with $\iota(\langle z_j\mid j<n\rangle)\leq\alpha$.
In particular, $s:=\langle z_j\mid j<n\rangle$ is in $S_{\alpha+1}$.
We go over the clauses of Definition~\ref{def_order} in reverse order:
\begin{itemize}
\item[(iii)] Given $m<3$,
define an elevator $e:T_\alpha\rightarrow T_{\alpha+1}$ by letting, for every $y\in T_\alpha$, $e(y)$ be the unique $z\in A_{s,m}$ extending $y$.
As $\im(s)=\{z_j\mid j<n\}\s A_{s,m}$, for every $j<n$, $e(z_j\restriction\alpha)$ must be $z_j$.
We claim that $e$ is an $(\id,\varphi_m)$-elevator.
Indeed, for any $\langle y_i\mid i<\chi\rangle\in (T_\alpha)^\chi$, $\{e(y_i)\mid i<\chi\}\s A_{s,m}$,
so by the definition of $f_{\alpha+1}$:
$$f_{\alpha+1}(\langle e(y_i)\mid i<\chi\rangle)=(\varphi_m\circ f_\alpha)(\langle y_i\mid i<\chi\rangle)).$$
\item[(ii)] By Clause~(iii), we may fix an $(\id,\varphi_1)$-elevator $e_2:T_\alpha\rightarrow T_{\alpha+1}$ such that $e_2(z_j\restriction\alpha)=z_j$
for all $j<n$. By Remark~\ref{rmk417}, it is also a $(\varphi_2,\varphi_2)$-elevator.
\item[(i)] By Clause~(iii), we may fix an $(\id,\varphi_0)$-elevator $e_1:T_\alpha\rightarrow T_{\alpha+1}$ such that $e_1(z_j\restriction\alpha)=z_j$
for all $j<n$. By Remark~\ref{rmk417}, it is also a $(\varphi_2,\varphi_1)$-elevator.
\end{itemize}

(2) Fix $\beta<\alpha$,
nonzero $n<\chi$, and $\langle z_j\mid j<n\rangle\in (T_{\alpha+1})^n$ with $\iota(\langle z_j\mid j<n\rangle)\leq\beta$.
We go over the clauses of Definition~\ref{def_order}:

\begin{itemize} \item[(i)] By Clause~(1) of this claim, fix a $(\varphi_2,\varphi_2)$-elevator
$e:T_\alpha\rightarrow T_{\alpha+1}$ such that $e(z_j\restriction\alpha)=z_j$ for all $j<n$.
In addition, as $T_\beta$ and $T_\alpha$ are coordinated,
fix a $(\varphi_2,\varphi_1)$-elevator $e_1:T_\beta\rightarrow T_\alpha$
such that $e_1(z_j\restriction\beta)=z_j\restriction\alpha$ for all $j<n$.
Set $E_1:=e\circ e_1$.
Then $E_1(z_j\restriction\beta)=e(e_1(z_j\restriction\beta))=e(z_j\restriction \alpha)=z_j$ for all $j<n$.
In addition, for all $y\in T_\beta$, $y\stree e_1(y)\stree e(e_1(y))=E_1(y)$.
Finally, for every $\langle y_i\mid i<\chi\rangle\in(T_\beta)^\chi$,
$$\begin{aligned}(\varphi_2\circ f_{\alpha+1})(\langle E_1(y_i)\mid i<\chi\rangle)&=(\varphi_2\circ f_{\alpha+1})(\langle e(e_1(y_i))\mid i<\chi\rangle)\\
&=(\varphi_2\circ f_\alpha)(\langle e_1(y_i)\mid i<\chi\rangle)\\
&=(\varphi_1\circ f_\beta)(\langle y_i\mid i<\chi\rangle).\end{aligned}$$

Thus, $E_1$ is a $(\varphi_2,\varphi_1)$-elevator, as sought.

\item[(ii)] Replace $1$ by $2$ throughout the above proof.
\item[(iii)] This clause is satisfied vacuously in this case, as $\alpha+1\neq \beta+1$.\qedhere
\end{itemize}
\end{proof}

Next, suppose that we have reached an $\alpha\in\acc(\kappa)$ such that $\langle (T_\beta,f_\beta)\mid \beta<\alpha\rangle$ has already been successfully defined.
For each $x\in T\restriction D_\alpha$, we shall define a branch $\mathbf{b}_x^\alpha$ through $\bigcup_{\beta<\alpha}T_\beta$, and then let
\begin{equation}\tag{$\star$}\label{promise1}T_\alpha:=\{\mathbf{b}_x^\alpha\mid x\in T\restriction D_\alpha\}.\end{equation}

The branch $ \mathbf{b}_x^ \alpha$ will be obtained as the limit $\bigcup\im(b_x^\alpha)$ of a sequence $\langle b_x^\alpha(\beta)\mid\beta\in D_\alpha\setminus\dom(x)\rangle$ of nodes such that:

\begin{itemize}
\item for every $\beta\in D_\alpha\setminus\dom(x)$, $b_x^\alpha(\beta)\in T_\beta$;
\item for every pair $\beta<\beta'$ of ordinals in $D_\alpha\setminus\dom(x)$, $x\s b_x^\alpha(\beta)\stree b_x^\alpha(\beta')$;
\item for every $\beta\in\acc(D_\alpha\setminus\dom(x))$, $b_x^\alpha(\beta)=\bigcup(\im(b_x^\alpha\restriction\beta))$.
\end{itemize}

The construction is by recursion on $\beta\in D_\alpha$,
where at stage $\beta$ we shall be determining $b_x^\alpha(\beta)$ for all $x\in T\restriction(D_\alpha\cap (\beta+1))$ simultaneously.

$\br$ For $\beta:=\min(D_\alpha)$,
it is the case that $\beta=0$ and so $T_\beta=\{x\}$ for $x:=\emptyset$.
Therefore, we set $b_x^\alpha(\beta):=x$.

$\br$ Suppose that we are given a nonzero $\beta\in\nacc(D_\alpha)$ such that
$b_x^\alpha\restriction\beta$ has already been defined for all $x\in T\restriction(D_\alpha\cap\beta)$.
We need to define $b_x^\alpha(\beta)$ for all $x\in T\restriction(D_\alpha\cap(\beta+1))$.
For every $x\in T_\beta$, we just let $b_x^\alpha(\beta):=x$.
Our next task is defining $b_x^\alpha(\beta)$ for all $x\in T\restriction(D_\alpha\cap\beta)$.
To this end, we introduce the following pieces of notation.

\begin{indef}\label{gfamily}
Denote $\beta^-:=\sup(D_\alpha\cap\beta)$
and note that by the definition of $D_\alpha$,
it is the case that $\beta=\eta+1$ for a unique $\eta\in C_\alpha\setminus\beta^-$.
Now, let $\mathcal E_\beta^\alpha$ denote the collection of all
$(\id,\varphi_1)$-elevators $e:T_{\beta^-}\rightarrow T_{\beta}$ satisfying
that if there exists a nonzero $n<\chi$ such that $\psi(\eta)\in(T\restriction(D_\alpha\cap\beta^-))^n$,
then letting $\bar T:=(T\restriction(\beta+1))(\psi(\eta))$ (using Definition~\ref{derived}),
for every $j<n$,
$$e(b_{\psi(\eta)(j)}^\alpha(\beta^-))=(\sealantichain(\fork{\langle b_{\psi(\eta)(j)}^\alpha(\beta^-)\mid j<n\rangle},\bar T,\Omega_\eta))_j.$$
\end{indef}

Note that the definitions of $\beta^-,\eta$ and $\mathcal E_\beta^\alpha$ are all determined by no more than the following objects:
\begin{itemize}
\item $\langle (T_\gamma,f_\gamma)\mid \gamma\le\beta\rangle$,
\item $\psi(\eta)$,
\item $D_\alpha\cap(\beta+1)$, and possibly also on
\item $\langle b_{\psi(\eta)(j)}^\alpha(\beta^-)\mid j<n\rangle$ and $\Omega_\eta$,
where we note in particular that $\psi(\eta)(j) \in T\restriction(D_\alpha\cap\beta^-)$ for all $j<n$.
\end{itemize}

\begin{claim}\label{E-nonempty} $\mathcal E_\beta^\alpha$ is nonempty.
\end{claim}
\begin{proof} If there exists some nonzero $n<\chi$ such that $\psi(\eta)\in(T\restriction(D_\alpha\cap\beta^-))^n$,
then let
$$z:=\sealantichain(\fork{\langle b_{\psi(\eta)(j)}^\alpha(\beta^-)\mid j<n\rangle},\bar T,\Omega_\eta).$$
Otherwise, just set $(n,z):=(0,\emptyset)$. Now, there are two options:

\begin{description}
\item[The case $\eta=\beta^-$]
As $T_\eta$ and $T_{\beta}$ are coordinated and $\beta=\eta+1$,
we fix an $(\id,\varphi_1)$-elevator $e:T_\eta\rightarrow T_{\beta}$ such that $e((z)_j\restriction\eta)=(z)_j$ for all $j<n$.
Since $\eta=\beta^-$, $e$ demonstrates that $\mathcal E_\beta^\alpha$ is nonempty.

\item[The case $\eta>\beta^-$] As $T_{\beta^-}$ and $T_\eta$ are coordinated, we first fix a $(\varphi_2,\varphi_1)$-elevator $e_1:T_{\beta^-}\rightarrow T_\eta$ such that
$e_1(b_{\psi(\eta)(j)}^\alpha(\beta^-))=(z)_j\restriction\eta$ for all $j<n$.
Then, as $T_\eta$ and $T_{\beta}$ are coordinated and $\beta=\eta+1$,
we also fix an $(\id,\varphi_2)$-elevator $e_2:T_\eta\rightarrow T_{\beta}$ such that $e_2((z)_j\restriction\eta)=(z)_j$ for all $j<n$.
Set $e:=e_2\circ e_1$.
Clearly, $e(b_{\psi(\eta)(j)}^\alpha(\beta^-))=(z)_j$ for all $j<n$.
In addition, for every $\langle y_i\mid i<\chi\rangle\in(T_{\beta^-})^\chi$,
$$\begin{aligned}f_{\beta}(\langle e(y_i)\mid i<\chi\rangle)&=f_{\beta}(\langle e_2(e_1(y_i))\mid i<\chi\rangle)\\&=(\varphi_2\circ f_\eta)(\langle e_1(y_i)\mid i<\chi\rangle)\\
&=(\varphi_1\circ f_{\beta^-})(\langle y_i\mid i<\chi\rangle).\end{aligned}$$
So, $e$ demonstrates that $\mathcal E_\beta^\alpha$ is nonempty.\qedhere
\end{description}
\end{proof}
Let $e:=\min(\mathcal E_\beta^\alpha,\lhd_\kappa)$,
and then define $b_x^\alpha(\beta):=e(b_x^\alpha(\beta^-))$ for every $x\in T\restriction(D_\alpha\cap\beta)$.

$\br$ Suppose that we are given a $\beta\in\acc(D_\alpha)$ such that
$b_x^\alpha\restriction\beta$ has already been defined for all $x\in T\restriction(D_\alpha\cap\beta)$.
For every $x\in T\restriction (D_\alpha\cap(\beta+1))$,
we let
$$b_x^\alpha(\beta):=\begin{cases}
x,&\text{if }x\in T_\beta;\\
\bigcup\im(b_x^\alpha\restriction\beta),&\text{if }x\in T\restriction (D_\alpha\cap \beta).
\end{cases}$$

In order to argue that $b_x^\alpha(\beta)$ is indeed an element of $T_\beta$,
it suffices to prove the following claim.

\begin{claim}\label{claim4204} For every $x\in T\restriction (D_\alpha\cap \beta)$, $b_x^\alpha(\beta)=\mathbf{b}_x^\beta$.
\end{claim}
\begin{proof} It suffices to show that for every
$x\in T\restriction (D_\alpha\cap \beta)$, for every $\gamma\in(D_\alpha\cap\beta)\setminus\dom(x)$,
it is the case that $b_x^\alpha(\gamma)=\mathbf b_x^\beta\restriction\gamma$.
Recalling that $\beta\in\acc(D_\alpha)$ and that $\vec D$ is coherent,
we infer that $D_{\alpha}\cap\beta=D_{\beta}$.
Thus, it suffices to show that for every
$x\in T\restriction D_\beta$, for every $\gamma\in D_\beta\setminus\dom(x)$,
it is the case that $b_x^\alpha(\gamma)=b_x^\beta(\gamma)$.
The proof is by induction on $\gamma \in D_\beta$ simultaneously for all $x\in T\restriction D_\beta$, as follows:

$\br$ The base case $\gamma=0$ is obvious, as $b_\emptyset^\alpha(0)=\emptyset=b_\emptyset^\beta(0)$.

$\br$ Suppose that $\gamma^-<\gamma$ are successive elements of $D_\beta$ and that, for every $x\in T\restriction (D_\beta\cap\gamma)$,
$b_x^\alpha(\gamma^-)=b_x^\beta(\gamma^-)$.
By our construction, it is the case that $b_x^\alpha(\gamma):=e_\alpha(b_x^\alpha(\gamma^-))$ for $e_\alpha:=\min(\mathcal E^\alpha_\gamma,\lhd_\kappa)$
and that $b_x^\beta(\gamma):=e_\beta(b_x^\beta(\gamma^-))$ for $e_\beta:=\min(\mathcal E^\beta_\gamma,\lhd_\kappa)$.
Reading the comment right after Notation~\ref{gfamily}, it is clear that in this case $\mathcal E^\alpha_\gamma=\mathcal E^\beta_\gamma$,
and so $e_\alpha=e_\beta$, and thus $b_x^\alpha(\gamma)=b_x^\beta(\gamma)$ for all $x\in T\restriction (D_\beta\cap\gamma)$.
For $x\in T_\gamma$, clearly $b_x^\alpha(\gamma)=x=b_x^\beta(\gamma)$.

$\br$ Suppose $\gamma\in \acc(D_\beta)$.
For every $x\in T\restriction (D_\beta\cap\gamma)$ such that the two sequences $b_x^\alpha$ and $b_x^\beta$ agree up to $\gamma$, it is the case that they have the same unique limit,
so that $b_x^\alpha(\gamma)=b_x^\beta(\gamma)$.
For $x\in T_\gamma$, again, $b_x^\alpha(\gamma)=x=b_x^\beta(\gamma)$.
\end{proof}

We are done defining $\langle \mathbf{b}_x^\alpha\mid x\in T\restriction D_\alpha\rangle$,
and so we define $T_\alpha$ as per $\eqref{promise1}$.
\begin{claim}\label{claim885} For all $x\in T\restriction D_\alpha$ and $\gamma\in D_\alpha\setminus\dom(x)$, $\mathbf b_x^\alpha=\mathbf b_{b_x^\alpha(\gamma)}^\alpha$.
\end{claim}
\begin{proof} By the canonical nature of the above construction.
\end{proof}
Now, to define $f_\alpha:(T_\alpha)^\chi\rightarrow\mathbb Q_\lambda$,
let $\vec{w}=\langle w_i\mid i<\chi\rangle$ in $(T_\alpha)^\chi$ be given.
For each $i<\chi$, find $x_i\in T\restriction D_\alpha$ of minimal height such that $\mathbf{b}_{x_i}^\alpha=w_i$.
Set $\gamma:=\sup\{\dom(x_i)\mid i<\chi\}$, and notice that it follows from Claim~\ref{claim885} that $\iota(\vec{w})\leq\gamma$. There are two main cases to consider:
\begin{itemize}
\item[$\br$] If $\gamma=\alpha$, then $\cf(\alpha)=\chi=\omega$.
Now, there are two subcases here:
\begin{itemize}
\item[$\br\br$] If $\iota(\vec{w})=\alpha$, then it is harmless to let $f_\alpha(\vec{w}):=\langle0\rangle$.
\item[$\br\br$] If $\iota(\vec{w})<\alpha$,
then since $\cf(\alpha)\le\chi<\cf(\lambda)$,
the set $F:=\{f_\delta(\langle {w_i\restriction\delta}\mid i<\chi\rangle) \mid \iota(\vec{w})\leq\delta<\alpha\}$ is bounded in $\mathbb Q_\lambda$; fix some upper bound $q$ of $F$, and let $f_\alpha(\vec{w}):=q$.
\end{itemize}
\item[$\br$] If $\gamma<\alpha$,
then denote $(\varphi_1\circ f_\gamma)(\langle w_i\restriction\gamma\mid i<\chi\rangle)$ by $p^\smallfrown\langle\xi\rangle$.
Note that the definition of $\varphi_1$ ensures that $p$ is nonempty.
Now, again there are two subcases:
\begin{itemize}
\item[$\br\br$] If $\alpha\in A(\vec{D})$, then let $f_\alpha(\vec{w}):=p$.
\item[$\br\br$] Otherwise, let $f_\alpha(\vec{w}):=p^\smallfrown\langle\xi+\sigma\rangle$,
where $\sigma := \otp(D_\alpha\setminus({\gamma+1}))$.\footnote{Note that $\sigma\le\otp(D_\alpha)<\lambda$ since $\alpha\notin A(\vec D)$.}
\end{itemize}
\end{itemize}
This completes the definition of the function $f_\alpha$.
Next, we must verify that $\bigcup_{\beta\le\alpha}f_\beta$ is strictly increasing,
and that $T_\beta$ and $T_\alpha$ are coordinated for every $\beta<\alpha$.
It will be easier to show once we have established the following claim.

\begin{claim}\label{c4206} Let $\langle w_i\mid i<\chi\rangle\in (T_\alpha)^\chi$.
For each $i<\chi$, find $x_i\in T\restriction D_\alpha$ of minimal height such that $\mathbf{b}_{x_i}^\alpha=w_i$.
Suppose that $\gamma:=\sup\{\dom(x_i)\mid i<\chi\}$ is smaller than $\alpha$,
and let $p^\smallfrown\langle\xi\rangle$ denote $(\varphi_1\circ f_\gamma)(\langle w_i\restriction\gamma\mid i<\chi\rangle)$. Then:
\begin{enumerate}
\item For every $\beta\in D_\alpha\setminus\gamma$,
$$(\varphi_2\circ f_\beta)(\langle w_i\restriction \beta\mid i<\chi\rangle)=p;$$
\item For every $\beta\in D_\alpha\setminus(\gamma+1)$,
$$f_\beta(\langle w_i\restriction \beta\mid i<\chi\rangle)=p^\smallfrown\langle\xi+\sigma\rangle,$$
where $\sigma := \otp(D_\alpha\cap\beta\setminus(\gamma+1))$.
\end{enumerate}

\end{claim}
\begin{proof} (1) The conclusion for $\beta>\gamma$ follows from Clause~(2) below.
As for $\beta=\gamma$, note that by Remark~\ref{rmk417},
$$(\varphi_2\circ f_\beta)(\langle w_i\restriction \beta\mid i<\chi\rangle)=(\varphi_2\circ\varphi_1\circ f_\beta)(\langle w_i\restriction \beta\mid i<\chi\rangle)=\varphi_2(p^\smallfrown\langle\xi\rangle)=p.$$

(2) For each $i<\chi$, write $y_i:=w_i\restriction\gamma$, and
note that $\mathbf{b}_{y_i}^\alpha=w_i$ by Claim~\ref{claim885}.
We now prove the claim by induction on $\beta\in D_\alpha\setminus(\gamma+1)$:
\begin{description}
\item[Base] Suppose $\beta=\min(D_\alpha\setminus(\gamma+1))$,
so that $\otp(D_\alpha\cap\beta\setminus(\gamma+1))=0$.
Recalling the construction, there exists some $(\id,\varphi_1)$-elevator $e:T_{\gamma}\rightarrow T_{\beta}$ (coming from $\mathcal E^\alpha_\beta$)
such that $b_{x}^\alpha(\beta)=e(b_{x}^\alpha(\gamma))$ for every $x\in T_{\gamma}$.
Thus, $$\begin{aligned}f_{\beta}(\langle b_{y_i}^\alpha(\beta)\mid i<\chi\rangle)&=(\varphi_1\circ f_{\gamma})(\langle b_{y_i}^\alpha({\gamma})\mid i<\chi\rangle)\\
&=(\varphi_1\circ f_{\gamma})(\langle y_i\mid i<\chi\rangle)\\
&=(\varphi_1\circ f_{\gamma})(\langle w_i\restriction\gamma\mid i<\chi\rangle)\\
&=p^\smallfrown\langle\xi\rangle\\
&=p^\smallfrown\langle\xi+\sigma\rangle,\end{aligned}$$
since $\sigma=0$.

\item[Successor step] Suppose $\beta\in\nacc(D_\alpha)$ is such that $\beta^-:=\sup(D_\alpha\cap\beta)$
is bigger than $\gamma$ and
satisfies $$f_{\beta^-}(\langle b_{y_i}^\alpha(\beta^-)\mid i<\chi\rangle)=p^\smallfrown\langle\xi+\sigma\rangle,$$
where $\sigma := \otp(D_\alpha\cap\beta^-\setminus(\gamma+1))$.
Recalling the construction, there exists some $(\id,\varphi_1)$-elevator $e:T_{\beta^-}\rightarrow T_{\beta}$
such that $b_{x}^\alpha(\beta)=e(b_{x}^\alpha(\beta^-))$ for every $x\in T\restriction(D_\alpha\cap\beta)$.
Thus, $$\begin{aligned}f_{\beta}(\langle b_{y_i}^\alpha(\beta)\mid i<\chi\rangle)&=(\varphi_1\circ f_{\beta^-})(\langle b_{y_i}^\alpha({\beta^-})\mid i<\chi\rangle)\\
&=\varphi_1(p^\smallfrown\langle\xi+\sigma\rangle)\\
&=p^\smallfrown\langle\xi+\sigma+1\rangle,\end{aligned}$$
as sought.

\item[Limit step] Suppose $\beta\in \acc(D_\alpha\setminus\gamma)$.
By Claim~\ref{claim4204}, $\langle b_{y_i}^\alpha(\beta)\mid i<\chi\rangle=\langle \mathbf{b}_{y_i}^\beta\mid i<\chi\rangle$.
The definition of $f_\beta(\langle w_i\restriction \beta\mid i<\chi\rangle)$ goes through the following considerations.
For each $i<\chi$, find $\bar x_i\in T\restriction D_\beta$ of minimal height such that $\mathbf{b}_{\bar x_i}^\beta=w_i\restriction\beta$,
and then set $\bar\gamma:=\sup\{\dom(\bar x_i)\mid i<\chi\}$.
As $\mathbf{b}_{y_i}^\alpha\restriction\beta=w_i\restriction\beta$ for every $i<\chi$,
we infer that $\bar\gamma\le\gamma$.
To see that also $\bar\gamma\ge\gamma$, note that for every $i<\chi$,
by Claim~\ref{claim4204}, $\mathbf{b}_{\bar x_i}^\alpha\restriction\beta=\mathbf{b}_{\bar x_i}^\beta$, and hence
$\mathbf{b}_{\bar x_i}^\alpha\restriction\gamma=\mathbf{b}_{\bar x_i}^\beta\restriction\gamma=w_i\restriction\gamma=y_i$,
and then Claim~\ref{claim885} implies that $\mathbf{b}_{\bar x_i}^\alpha=\mathbf{b}_{y_i}^\alpha=w_i$.

Now, since $\bar\gamma=\gamma<\beta$ and $\beta\notin A(\vec{D})$, it is the case that $f_\beta(\langle w_i\restriction\beta\mid i<\chi\rangle)=p^\smallfrown\langle\xi+\sigma\rangle$
where $\sigma := \otp(D_\beta\setminus(\gamma+1))$.
But $D_\beta=D_\alpha\cap\beta$ and hence
$\sigma = \otp(D_\alpha\cap\beta\setminus(\gamma+1))$.
\qedhere
\end{description}
\end{proof}

\begin{claim} Let $\vec{w}=\langle w_i\mid i<\chi\rangle$ in $(T_\alpha)^\chi$ be given.
Then $f_\beta(\langle w_i\restriction\beta\mid i<\chi\rangle)\ll f_\alpha(\vec{w})$ for every $\beta\in[\iota(\vec{w}),\alpha)$.
\end{claim}
\begin{proof}
By the induction hypothesis on $\langle (T_\beta,f_\beta)\mid\beta<\alpha\rangle$, to show that $f_\beta(\langle w_i\restriction\beta\mid i<\chi\rangle)\ll f_\alpha(\vec{w})$ for a tail of $\beta<\alpha$,
it suffices to prove that this is the case for cofinally many $\beta<\alpha$.
For each $i<\chi$, find $x_i\in T\restriction D_\alpha$ of minimal height such that $\mathbf{b}_{x_i}^\alpha=w_i$.
Set $\gamma:=\sup\{\dom(x_i)\mid i<\chi\}$, so that $\gamma\in D_\alpha\cup\{\alpha\}$.
By the definition of $f_\alpha$, we may avoid trivialities and assume that $\gamma<\alpha$.
In this case,
we let $p^\smallfrown\langle\xi\rangle$ denote $(\varphi_1\circ f_\gamma)(\langle w_i\restriction\gamma\mid i<\chi\rangle)$,
and observe that by Claim~\ref{c4206}(2), it suffices to prove that for every
$\sigma < \otp(D_\alpha\setminus(\gamma+1))$,
$$p^\smallfrown\langle\xi+\sigma\rangle\ll f_\alpha(\vec{w}).$$
However, the definition of $f_\alpha$ makes it clear that this is indeed the case.
\end{proof}
\begin{claim} Let $\epsilon<\alpha$. Then $T_\epsilon$ and $T_\alpha$ are coordinated.
\end{claim}
\begin{proof}
Consider any given nonzero $n<\chi$ and $\langle z_j\mid j<n\rangle\in (T_\alpha)^n$
with $\iota(\langle z_j\mid j<n\rangle)\leq\epsilon$.
Before going over the clauses of Definition~\ref{def_order},
let us first establish the following crucial subclaim.
\begin{subclaim} There are $\beta\in(\epsilon,\alpha)$ and a $(\varphi_2,\varphi_2)$-elevator $e: T_\beta\rightarrow T_\alpha$ such that $\langle e(z_j\restriction\beta)\mid j<n\rangle=\langle z_j\mid j<n\rangle$.
\end{subclaim}
\begin{proof}
For each $j<n$, fix $\bar x_j\in T\restriction D_\alpha$ of minimal height such that $z_j=\mathbf{b}_{\bar x_j}^\alpha$.
As $n$ is finite, we may let $\beta:=\min(D_\alpha\setminus\max\{ \dom(\bar x_j),\epsilon+1\mid j<n\})$.

$\br$ If $\alpha\notin A(\vec{D})$, then define an elevator $e:T_\beta\rightarrow T_\alpha$ via $e(y):=\mathbf{b}_y^\alpha$.
By Claim~\ref{claim885}, $e(z_j\restriction\beta)=z_j$ for every $j<n$.

To see that $e$ is a $(\varphi_2,\varphi_2)$-elevator,
let $\langle y_i\mid i<\chi\rangle\in (T_\beta)^\chi$.
For each $i<\chi$, denote $w_i:=e(y_i)$,
and find $ x_i\in T\restriction D_\alpha$ of minimal height such that $\mathbf{b}_{ x_i}^\alpha=w_i$.
As $w_i=\mathbf b^\alpha_{y_i}$, the minimality of $x_i$ implies that $\dom( x_i)\le\dom(y_i)$,
so that $\gamma:=\sup\{\dom( x_i)\mid i<\chi\}$ satisfies $\gamma\le\beta<\alpha$.
Let $p^\smallfrown\langle\xi\rangle$ denote $(\varphi_1\circ f_{\gamma})(\langle w_i\restriction\gamma\mid i<\chi\rangle)$,
and denote $\vec{w} := \langle w_i\mid i<\chi\rangle$.

On one hand, since $\beta\in D_\alpha\setminus\gamma$, Claim~\ref{c4206}(1) asserts that
$(\varphi_2\circ f_\beta)(\langle {w_i\restriction\beta}\mid i<\chi\rangle)=p$.
On the other hand, by the definition of $f_\alpha$,
it is the case that $f_\alpha(\vec{w})=p^\smallfrown\langle\xi+\sigma\rangle$
for some $\sigma\le\otp(D_\alpha)$,
and hence $(\varphi_2\circ f_\alpha)(\vec{w})=p$.
Altogether, $(\varphi_2\circ f_\alpha)(\vec{w})=(\varphi_2\circ f_\beta)(\langle w_i\restriction\beta\mid i<\chi\rangle)$.

$\br$ If $\alpha\in A(\vec{D})$, then as $T_\beta$ and $T_{\beta+1}$ are coordinated,
let us fix an $(\id,\varphi_0)$-elevator $e_0:T_\beta\rightarrow T_{\beta+1}$
such that $e_0(z_j\restriction\beta)=z_j\restriction(\beta+1)$ for every $j<n$.
Set $\delta:=\min(D_\alpha\setminus(\beta+2))$.
As $T_{\beta+1}$ and $T_\delta$ are coordinated,
fix a $(\varphi_2,\varphi_2)$-elevator $e_2:T_{\beta+1}\rightarrow T_\delta$
such that $e_2(z_j\restriction(\beta+1))=z_j\restriction\delta$ for every $j<n$.
Finally, define an elevator $e:T_\beta\rightarrow T_\alpha$ via $e(y):=\mathbf{b}_{e_2(e_0(y))}^\alpha$.
By Claim~\ref{claim885}, $e(z_j\restriction\beta)=z_j$ for every $j<n$.

To see that $e$ is a $(\varphi_2,\varphi_2)$-elevator,
let $\langle y_i\mid i<\chi\rangle\in (T_\beta)^\chi$.
For each $i<\chi$, denote $w_i:=e(y_i)$ and find $ x_i\in T\restriction D_\alpha$ of minimal height such that $\mathbf{b}_{ x_i}^\alpha=w_i$.
As $w_i=\mathbf b^\alpha_{e_2(e_0(y_i))}$, the minimality of $x_i$ implies that $\dom( x_i)\le\dom(e_2(e_0(y_i)))$,
so that $\gamma:=\sup\{\dom( x_i)\mid i<\chi\}$ satisfies $\gamma\le\delta<\alpha$.

Now, by the definition of $f_\alpha$,
letting $p^\smallfrown\langle\xi\rangle$ denote $(\varphi_1\circ f_{\gamma})(\langle w_i\restriction\gamma\mid i<\chi\rangle)$,
it is the case that $f_\alpha(\vec{w})=p$, for $\vec{w} := \langle w_i\mid i<\chi\rangle$.
In addition, since $\delta\in D_\alpha\setminus\gamma$, Claim~\ref{c4206}(1) asserts that
$(\varphi_2\circ f_\delta)(\langle w_i\restriction\delta\mid i<\chi\rangle)=p$.

By Remark~\ref{rmk417} and the choice of $e_0$ and $e_2$:
$$\begin{aligned}(\varphi_2\circ f_\beta)(\langle w_i\restriction\beta\mid i<\chi\rangle)&=(\varphi_2\circ \varphi_2\circ\varphi_0\circ f_\beta)(\langle w_i\restriction\beta\mid i<\chi\rangle)\\
&=(\varphi_2\circ \varphi_2\circ f_{\beta+1})(\langle e_0(w_i\restriction\beta)\mid i<\chi\rangle)\\
&=(\varphi_2\circ \varphi_2\circ f_{\beta+1})(\langle w_i\restriction(\beta+1)\mid i<\chi\rangle)\\
&=(\varphi_2\circ \varphi_2\circ f_{\delta})(\langle e_2(w_i\restriction(\beta+1))\mid i<\chi\rangle)\\
&=(\varphi_2\circ \varphi_2\circ f_\delta)(\langle w_i\restriction\delta\mid i<\chi\rangle)\\
&=\varphi_2(p)=(\varphi_2\circ f_\alpha)(\vec{w}),\end{aligned}$$
as sought.
\end{proof}

Let $\beta$ and $e:T_\beta\rightarrow T_\alpha$ be given by the subclaim.
We now go over the clauses of Definition~\ref{def_order}:
\begin{itemize} \item[(i)]
By the induction hypothesis, $T_\epsilon$ and $T_\beta$ are coordinated,
so we may fix a $(\varphi_2,\varphi_1)$-elevator $e_1:T_\epsilon\rightarrow T_\beta$ such that $e_1(z_j\restriction\epsilon)=z_j\restriction\beta$ for all $j<n$.
Set $E_1:=e\circ e_1$.
Then $E_1(z_j\restriction\epsilon)=e(e_1(z_j\restriction\epsilon))=e(z_j\restriction \beta)=z_j$ for all $j<n$.
In addition, for every $y\in T_\epsilon$, $y\stree e_1(y)\stree e(e_1(y))=E_1(y)$.
Finally, for every $\langle y_i\mid i<\chi\rangle\in(T_\epsilon)^\chi$,
$$\begin{aligned}(\varphi_2\circ f_\alpha)(\langle E_1(y_i)\mid i<\chi\rangle)&=(\varphi_2\circ f_\alpha)(\langle e(e_1(y_i))\mid i<\chi\rangle)\\
&=(\varphi_2\circ f_\beta)(\langle e_1(y_i)\mid i<\chi\rangle)\\
&=(\varphi_1\circ f_\epsilon)(\langle y_i\mid i<\chi\rangle).\end{aligned}$$
Thus, $E_1$ is a $(\varphi_2,\varphi_1)$-elevator, as sought.
\item[(ii)] Replace $1$ by $2$ throughout the above proof.
\item[(iii)] $\alpha$ is a limit ordinal, so the requirement is satisfied vacuously.\qedhere
\end{itemize}
\end{proof}

At the end of the above process, we have obtained a normal, slim, prolific, streamlined $\kappa$-tree $T:=\bigcup_{\alpha<\kappa}T_\alpha$
such that $f:=\bigcup_{\alpha<\kappa}f_\alpha$ is a strictly increasing map from $(T^\chi,{<_{T^\chi}})$ to $\mathbb Q_\lambda$.
The proof that $T$ is club-regressive follows that of \cite[Claim~2.3.4]{paper22};
thus we are left with proving that $T$ is $\chi$-free.
To this end, let $\vec s=\langle s_j\mid j<n\rangle\in T^n$ be given for some nonzero $n<\chi$,
and suppose that $A$ is a maximal antichain in $T(\vec s)$.
Let $\epsilon$ denote the unique element of $\{\dom(s_j)\mid j<n\}$.
Consider the club $E:=\{\alpha\in\acc(\kappa\setminus\epsilon)\mid (T\restriction\alpha)^n\s \phi[\alpha]\}$.

As the sequences $\langle \Omega_\beta\mid\beta<\kappa\rangle$ and $\langle R_i\mid i<\kappa\rangle$
together witness $\diamondsuit(H_\kappa)$,
for each $i<\kappa$,
we obtain from~\cite[Subclaim~4.1.4.1]{rinot20} that the following set is stationary in $\kappa$:
$$B_i:=\{\eta\in R_i\mid A\cap (T(\vec s)\restriction\eta)=\Omega_\eta\text{ is a maximal antichain in }T(\vec s)\restriction\eta\}.$$

Finally, using the hitting feature of the proxy sequence, pick some $\alpha\in E$
such that, for all $i<\alpha$, $$\sup(\nacc(C_\alpha)\cap B_i\cap\acc(\kappa))=\alpha.$$

\begin{claim} $A\s T(\vec s)\restriction\alpha$. In particular, $|A|<\kappa$.
\end{claim}
\begin{proof} Let $w$ be an arbitrary node in the $\alpha^{\text{th}}$ level of $T(\vec s)$,
and we shall show that it extends an element of $A$.
Recalling \eqref{promise1}, for each $j<n$, we may fix some $x_j\in T\restriction D_\alpha$ such that $(w)_j=\mathbf{b}_{x_j}^\alpha$.
As $n$ is finite, by Claim~\ref{claim885}, we may assume the existence of some $\gamma\in D_\alpha\setminus\epsilon$ such that $s_j\s x_j$ and $\dom(x_j)=\gamma$ for all $j<n$.
In particular, $\langle x_j\mid j<n\rangle \in (T_\gamma)^n$ and $\bar T:=T(\langle x_j\mid j<n\rangle)$ is a normal streamlined subtree of $T(\vec s)$.

Next, as $\alpha\in E$, we may find some $i<\alpha$ with $\phi(i)=\langle x_j\mid j<n\rangle$.
Pick a large enough $\eta\in\nacc(C_\alpha)\cap B_i\cap\acc(\kappa)$ such that $\sup(D_\alpha\cap\eta)>\gamma$.
Denote $\beta:=\eta+1$ and $\beta^-:=\sup(D_\alpha\cap\beta)$, so that $\gamma<\beta^-\le\eta<\beta$ with $\beta\in D_\alpha$.
Note that $\beta^-<\eta$, since otherwise, $\eta=\beta^-\in D_\alpha\cap\acc(\kappa)=\acc(C_\alpha)$,
contradicting the fact that $\eta\in\nacc(C_\alpha)$.

Recalling Notation~\ref{gfamily} and Claim~\ref{E-nonempty}, let $e:=\min(\mathcal E^\alpha_\beta,\lhd_\kappa)$,
so that $b_{x_j}^\alpha(\beta)=e(b_{x_j}^\alpha(\beta^-))$ for every $j<n$.
As $\psi(\eta)=\phi(\pi(\eta))=\phi(i)=\langle x_j\mid j<n\rangle$
and the latter is indeed an element of $(T\restriction(D_\alpha\cap\beta^-))^n$,
we get that for every $j<n$,
$$e(b_{x_j}^\alpha(\beta^-))=(\sealantichain(\fork{\langle b_{x_j}^\alpha(\beta^-)\mid j<n\rangle},\bar T\restriction(\beta+1),\Omega_\eta))_j.$$

By the choice of $\eta$,
$A\cap (T(\vec s)\restriction\eta)=\Omega_\eta$ is a maximal antichain in $T(\vec s)\restriction\eta$,
so there exists a $y\in\Omega_\eta$ that is comparable to $\fork{\langle b_{x_j}^\alpha(\beta^-)\mid j<n\rangle}$.
Furthermore, any node witnessing this comparability belongs to $\bar T$.
Together with the normality of $\bar T$, it follows that the following set is nonempty:
$$Q:=\{ z\in \bar T_{\beta}\mid \exists y\in\Omega_\eta\,( \fork{\langle b_{x_j}^\alpha(\beta^-)\mid j<n\rangle}\cup y\s z)\}.$$
Denote $z:=\min(Q, {\lhd_\kappa})$,
and let $y\in\Omega_\eta$ be a witness for $z\in Q$.
Recalling Definition~\ref{actions}(2), this means that for every $j<n$,
$$(w)_j\restriction\beta=b_{x_j}^\alpha(\beta)=e(b_{x_j}^\alpha(\beta^-))=(z)_j,$$
and hence $y\stree z\stree w$. As $y\in\Omega_\eta\s A$, we infer that $w$ indeed extends an element of $A$.
\end{proof}
This completes the proof.
\end{proof}

\begin{cor}\label{cor611} Suppose that $\lambda$ is a singular cardinal satisfying both $\square_\lambda$ and $2^\lambda=\lambda^+$.
Then for every positive integer $n$,
there exists a streamlined $\lambda^+$-Souslin tree $T$ satisfying the two:
\begin{itemize}
\item all $n$-derived trees of $T$ are Souslin;
\item all $(n+1)$-derived trees of $T$ are special.
\end{itemize}
\end{cor}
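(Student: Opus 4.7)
The plan is to deduce Corollary~\ref{cor611} as an immediate consequence of Theorem~\ref{thm710}, so the only substantive work is to extract a suitable instance of the proxy principle from the hypotheses. First, I would argue that since $\lambda$ is singular (hence uncountable) and $2^\lambda = \lambda^+$, Shelah's diamond theorem yields $\diamondsuit(\lambda^+)$, and combined with $\square_\lambda$ this provides Gray's principle $\sd_\lambda$ from~\cite{MR2940957}. Then, invoking the $\sd_\lambda$ row of Table~\ref{table:derived-combinatorial} (citing \cite[Thm~3.6]{paper22}), I obtain
$\p_\lambda(\lambda^+,2,{\sq},\lambda^+,\{E^{\lambda^+}_{\cf(\lambda)}\},2,{<}\lambda)$.
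The monotonicity features of Subsection~\ref{monotonicity} --- expanding the stationary set $E^{\lambda^+}_{\cf(\lambda)}$ to all of $\lambda^+$ and weakening $\sigma = {<}\lambda$ to $\sigma = {<}\omega$ --- then yield the weaker instance $\p_\lambda(\lambda^+,2,{\sq},\lambda^+)$ required by Theorem~\ref{thm710}.

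Next, I would set $\chi := n+1$. Since $n$ is a positive integer, $\chi \in [2,\omega)$; and since $\lambda$ is singular, $\omega \le \cf(\lambda)$, whence $\chi < \cf(\lambda)$. An application of Theorem~\ref{thm710} then produces a $\chi$-free, $\lambda$-splitting, club-regressive, streamlined $\lambda^+$-Souslin tree $T$ such that $T^\chi$ is special. The second bullet of the conclusion holds because $T^\chi$ is literally the $(n+1)$-power of $T$. The first bullet holds because being $\chi$-free means that every $\tau$-derived tree of $T$ is $\lambda^+$-Souslin for every nonzero $\tau < \chi = n+1$; specializing to $\tau = n$ yields that all $n$-derived trees of $T$ are Souslin.

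The main ``obstacle'' is really just the first step of locating a clean derivation of $\sd_\lambda$ from $\square_\lambda \wedge 2^\lambda = \lambda^+$ at singular $\lambda$; once that is accepted as a standard fact, everything else is parameter manipulation within the proxy framework, with the crucial content of the corollary --- the actual construction of the tree --- already encapsulated in Theorem~\ref{thm710}.
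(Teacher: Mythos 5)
Your overall strategy is the same as the paper's: extract $\p_\lambda(\lambda^+,2,{\sq},\lambda^+)$ from the hypotheses and then feed it into Theorem~\ref{thm710} with $\chi:=n+1$. The parameter-manipulation part of your argument is fine: expanding $E^{\lambda^+}_{\cf(\lambda)}$ to $\lambda^+$ and weakening ${<}\lambda$ to ${<}\omega$ are both covered by Subsection~\ref{monotonicity}, and the observation that $n+1<\omega\le\cf(\lambda)$ is exactly what makes Theorem~\ref{thm710} applicable. The reading of ``$\chi$-free'' and ``$T^\chi$'' as yielding the two bullets is also correct.

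The gap is in the very first step. You assert that $\diamondsuit(\lambda^+)$ (from Shelah's theorem) ``combined with $\square_\lambda$ provides Gray's principle $\sd_\lambda$.'' That implication is not automatic and you do not justify it. Gray's $\sd_\lambda$ is a genuine strengthening of $\square_\lambda\wedge\diamondsuit(\lambda^+)$ --- it requires the diamond sequence to cohere along the square sequence --- and the paper itself describes $\sd_\lambda$ as a ``strong combination of square and diamond,'' i.e., as additional data beyond the conjunction. If the implication $\square_\lambda\wedge\diamondsuit(\lambda^+)\Rightarrow\sd_\lambda$ held even just for singular $\lambda$, that would be a nontrivial theorem demanding a citation or a proof, which you supply neither. (It would also make \cite[Cor~3.10]{paper22} redundant, which is some indirect evidence that the implication is not something one gets for free.)

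The paper sidesteps this entirely: it cites \cite[Cor~3.10]{paper22}, which states that for \emph{singular} $\lambda$, the principle $\p_\lambda(\lambda^+,2,{\sq},\lambda^+)$ is equivalent to $\square_\lambda\wedge 2^\lambda=\lambda^+$. That result exploits precisely the feature of singular $\lambda$ (that $\square_\lambda$ can be witnessed with $\otp(C_\alpha)<\lambda$ throughout) that makes this case special, and it delivers the needed proxy instance in one step with no detour through $\sd_\lambda$. If you want to repair your proof, replace the $\sd_\lambda$ step with this citation; the remainder of your argument then goes through as written.
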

\begin{proof} By \cite[Corollary~3.10]{paper22}, for a singular cardinal $\lambda$,
$\p_\lambda(\lambda^+,2,{\sq},\lambda^+)$ is equivalent to the conjunction of $\square_\lambda$ and $2^\lambda=\lambda^+$.
Now, appeal to Theorem~\ref{thm710} with $\chi:=n+1$.
\end{proof}

It is not hard to see that assuming $\lambda=\lambda^{<\lambda}$,
every $\lambda^+$-tree whose square is special is in particular specializable.
We do not know of an example of a specializable $\lambda^+$-Souslin tree for $\lambda$ singular,
and so we ask whether the tree given by Corollary~\ref{cor611} is (or can be tweaked to be) specializable.

\section*{Acknowledgments}
The first author was supported by the Shamoon College of Engineering Young Research Grant YR/08/Y21/T2/D3.
The second and third authors were supported by the Israel Science Foundation (grant agreement 203/22).
The third author was also supported by the European Research Council (grant agreement ERC-2018-StG 802756).

Results from this paper were presented by the first author at the ninth \emph{European Set-Theory Conference},
M\"unster, September 2024.
We thank the organizers of the conference for providing a joyful and stimulating environment,
and the participants for their feedback.
In particular, we appreciate discussions with Pedro Marun,
which prompted us to include several clarifications in the final stages of the paper's preparation.

\end{document}